\definecolor{darkmagenta}{rgb}{0.55, 0.0, 0.55}
\newcommand{\mc}[1]{\mathcal{#1}}
\newcommand{\comment}[1]{}
\newcommand{\R}{\mathbb{R}}
\newcommand{\N}{\mathbb{N}}
\newcommand{\TODO}[1]{{\color{red}{TODO: #1}}}
\newtheorem{theorem}{Theorem}[section]
\newtheorem{corollary}{Corollary}[section]
\newtheorem{lemma}{Lemma}[section]
\newtheorem{definition}{Definition}[section]
\newtheorem{remark}{Remark}[section]
\newtheorem{proposition}[theorem]{Proposition}
\newcommand{\dual}{\textbf{Dual}}
\newcommand{\Ccal}{\mathcal{C}}
\newcommand{\Dcal}{\mathcal{D}}
\newcommand{\Ecal}{\mathcal{E}}
\newcommand{\Ical}{\mathcal{I}}
\newcommand{\Ncal}{\mathcal{N}}
\newcommand{\Pcal}{\mathcal{P}}
\newcommand{\Scal}{\mathcal{S}}
\newcommand{\Tcal}{\mathcal{T}}
\newcommand{\Ucal}{\mathcal{U}}
\newcommand{\Vcal}{\mathcal{V}}
\newcommand{\Ocal}{\mathcal{O}}
\newcommand{\Rcal}{\mathcal{R}}
\newcommand{\Ebb}{\mathbb{E}}
\newcommand{\Nbb}{\mathbb{N}}
\newcommand{\Pbb}{\mathbb{P}}
\newcommand{\Rbb}{\mathbb{R}}
\newcommand{\Zbb}{\mathbb{Z}}
\definecolor{dark_red}{rgb}{0.2,0,0}
\newcommand{\mb}[1]{\ensuremath{\boldsymbol{#1}}}
\newcommand{\paren}[1]{\left( #1 \right)}
\newcommand{\sqb}[1]{\left[ #1 \right]}
\newcommand{\set}[1]{\left\{ #1 \right\}}
\newcommand{\ceil}[1]{\left\lceil #1 \right\rceil}
\newcommand{\floor}[1]{\left\lfloor #1 \right\rfloor}
\newcommand{\norm}[1]{\left\| #1 \right\|}
\title{Memory-Constrained Algorithms for Convex Optimization via Recursive Cutting-Planes}
\author{
  Mo\"ise Blanchard\\
  MIT\\
  \small{\texttt{moiseb@mit.edu}}
  \and
  Junhui Zhang\\
  MIT\\
  \small{\texttt{junhuiz@mit.edu}}
  \and 
  Patrick Jaillet\\
  MIT\\
  \small{\texttt{jaillet@mit.edu}}
}
\date{}
\begin{document}

\maketitle

\begin{abstract}
   We propose a family of recursive cutting-plane algorithms to solve feasibility problems with constrained memory, which can also be used for first-order convex optimization. Precisely, in order to find a point within a ball of radius $\epsilon$ with a separation oracle in dimension $d$---or to minimize $1$-Lipschitz convex functions to accuracy $\epsilon$ over the unit ball---our algorithms use $\Ocal(\frac{d^2}{p}\ln \frac{1}{\epsilon})$ bits of memory, and make $\Ocal((C\frac{d}{p}\ln \frac{1}{\epsilon})^p)$ oracle calls, for some universal constant $C \geq 1$. The family is parametrized by $p\in[d]$ and provides an oracle-complexity/memory trade-off in the sub-polynomial regime $\ln\frac{1}{\epsilon}\gg\ln d$. While several works gave lower-bound trade-offs (impossibility results) \cite{marsden2022efficient,blanchard2023quadratic}---we explicit here their dependence with $\ln\frac{1}{\epsilon}$, showing that these also hold in any sub-polynomial regime---to the best of our knowledge this is the first class of algorithms that provides a positive trade-off between gradient descent and cutting-plane methods in any regime with $\epsilon\leq 1/\sqrt d$. The algorithms divide the $d$ variables into $p$ blocks and optimize over blocks sequentially, with approximate separation vectors constructed using a variant of Vaidya's method. 
    In the regime $\epsilon \leq d^{-\Omega(d)}$, our algorithm with $p=d$ achieves the information-theoretic optimal memory usage and improves the oracle-complexity of gradient descent.
\end{abstract}

\section{Introduction}

Optimization algorithms are ubiquitous in machine learning, from solving simple regressions to training neural networks. Their essential roles have motivated numerous studies on their efficiencies, which are usually analyzed through the lens of oracle-complexity: given an oracle (such as function value, or subgradient oracle), how many calls to the oracle are needed for an algorithm to output an approximate optimal solution? \cite{Nemirovsky1983}. However, ever-growing problem sizes have shown an inadequacy in considering only the oracle-complexity, and have motivated the study of the trade-off between oracle-complexity and other resources such as memory \cite{woodworth2019open,marsden2022efficient,blanchard2023quadratic} and communication\cite{lan_communication-efficient_2020, Reddi2016AIDEFA,shamir14,Smith2017,Mota2013,Zhang2012Communication,Wangni2018,wang17aMemo}. 

In this work, we study the oracle-complexity/memory trade-off for first-order non-smooth convex optimization, and the closely related feasibility problem, with a focus on developing memory efficient (deterministic) algorithms. Since \cite{woodworth2019open} formally posed as open problem the question of characterizing this trade-off, there have been exciting results showing what is impossible: for convex optimization in $\R^d$, \cite{marsden2022efficient} shows that any randomized algorithm with $d^{1.25-\delta}$ bits of memory needs at least $\tilde{\Omega}(d^{1+4\delta/3})$ queries, and this has later been improved for deterministic algorithms to $d^{1-\delta}$ bits of memory or $\tilde{\Omega}(d^{1+\delta/3})$ queries by \cite{blanchard2023quadratic}; in addition \cite{blanchard2023quadratic} shows that for the feasibility problem with a separation oracle, any algorithm which uses $d^{2-\delta}$ bits of memory needs at least $\tilde{\Omega}(d^{1+\delta})$ queries. 

Despite these recent results on the lower bounds, all known first-order convex optimization algorithms that output an $\epsilon$-suboptimal point fall into two categories: those that are quadratic in memory but can potentially achieve the optimal $\Ocal(d\ln\frac{1}{\epsilon})$ query complexity, as represented by the center-of-mass method, and those that have $\Ocal(\frac{1}{\epsilon^2})$ query complexity but only need the optimal $\Ocal(d\ln\frac{1}{\epsilon})$ bits of memory, as represented by the classical gradient descent \cite{woodworth2019open}. In addition, the above-mentioned memory bounds apply only between queries, and in particular the center-of-mass method \cite{woodworth2019open} is allowed to use infinite memory during computations.

We propose a family of memory-constrained algorithms for the stronger feasibility problem in which one aims to find a point within a set $Q$ containing a ball of radius $\epsilon$, with access to a separation oracle. In particular, this can be used for convex optimization since the subgradient information provides a separation vector. Our algorithms use $\Ocal(\frac{d^2}{p}\ln\frac{1}{\epsilon})$ bits of memory (including during computations) and $\Ocal((C\frac{d}{p}\ln\frac{1}{\epsilon})^p)$ queries for some universal constant $C\geq 1$, and a parameter $p\in[d]$ that can be chosen by the user. Intuitively, in the context of convex optimization, the algorithms are based on the idea that for any function $f(\mb x,\mb y)$ convex in the pair $(\mb x,\mb y)$, the partial minimum $\min_{\mb y} f(\mb x,\mb y)$ as a function of $\mb x$ is still convex and, using a variant of Vaidya's method proposed in \cite{lee2015faster}, our algorithm can approximate subgradients for that function $\min_{\mb y} f(\mb x,\mb y)$, thereby turning an optimization problem with variables $(\mb x,\mb y)$ to one with just $\mb x$. This idea, applied recursively with the variables divided into $p$ blocks, gives our family of algorithms and the above-mentioned memory and query complexity.

When $p=1$, our algorithm is a memory-constrained version of Vaidya's method \cite{vaidya1996new,lee2015faster}, and improves over the center-of-mass \cite{woodworth2019open} method by a factor of $\ln\frac{1}{\epsilon}$ in terms of memory while having optimal oracle-complexity. The improvements provided by our algorithms are more significant in regimes when $\epsilon$ is very small in the dimension $d$: increasing the parameter $p$ can further reduce the memory usage of Vaidya's method ($p=1$) by a factor $\ln\frac{1}{\epsilon}/\ln d$, while still improving over the oracle-complexity of gradient descent. In particular, in a regime $\ln\frac{1}{\epsilon}=\text{poly}(\ln d)$, these memory improvements are only in terms of $\ln d$ factors. However, in sub-polynomial regimes with potentially $\ln\frac{1}{\epsilon} = d^c$ for some constant $c>0$, these provide polynomial improvements to the memory of standard cutting-plane methods.

As a summary, this paper makes the following contributions.
\begin{itemize}
    \item Our class of algorithms provides a trade-off between memory-usage and oracle-complexity whenever $\ln\frac{1}{\epsilon}\gg\ln d$. Further, taking $p=1$ improves the memory-usage from center-of-mass \cite{woodworth2019open} by a factor $\ln\frac{1}{\epsilon}$, while preserving the optimal oracle-complexity.
    \item For $\ln\frac{1}{\epsilon} \geq \Omega(d\ln d)$, our algorithm with $p=d$ is the first known algorithm that outperforms gradient descent in terms of the oracle-complexity, but still maintains the optimal $\Ocal(d\ln\frac{1}{\epsilon})$ memory usage.
    \item We show how to obtain a $\ln\frac{1}{\epsilon}$ dependence in the known lower-bound trade-offs \cite{marsden2022efficient,blanchard2023quadratic}, confirming that the oracle-complexity/memory trade-off is necessary for any regime $\epsilon\lesssim \frac{1}{\sqrt d}$.
\end{itemize}

\section{Setup and Preliminaries}

In this section, we precise the formal setup for our results. We follow the framework introduced in \cite{woodworth2019open}, to define the memory constraint on algorithms with access to an oracle $\Ocal:\Scal\to \Rcal$ which takes as input a query $q\in \Scal$ and outputs a response $\Ocal(q)\in \Rcal$. Here, the algorithm is constrained to update an internal $M$-bit memory between queries to the oracle.

\begin{definition}[$M$-bit memory-constrained algorithm \cite{woodworth2019open,marsden2022efficient,blanchard2023quadratic}]
\label{def:memory_constraint_wo_comp}
    Let $\Ocal:\Scal\to \Rcal$ be an oracle. An $M$-bit memory-constrained algorithm is specified by a query function $\psi_{query}:\{0,1\}^M\to \Scal$ and an update function $\psi_{update}:\{0,1\}^M \times \Scal \times \Rcal \to \{0,1\}^M$. The algorithm starts with the memory state $\mathsf{Memory}_0 = 0^M$ and iteratively makes queries to the oracle. At iteration $t$, it makes the query $q_t=\psi_{query}(\mathsf{Memory}_{t-1})$ to the oracle, receives the response $r_t=\Ocal(q_t)$ then
    updates its memory $\mathsf{Memory}_t = \psi_{update}(\mathsf{Memory}_{t-1},q_t, r_t)$.
\end{definition}

The algorithm can stop at any iteration and the last query is its final output. Importantly, this model does not enforce constraints on the memory usage during the computation of $\psi_{update}$ and $\psi_{query}$. This is ensured in the stronger notion of a memory-constrained algorithm with computations. These are precisely algorithms that have constrained memory including for computations, with the only specificity that they need a decoder function $\phi$ to make queries to the oracle from their bit memory, and a discretization function $\psi$ to write a discretized response into the algorithm's memory.

\begin{definition}[$M$-bit memory-constrained algorithm with computations]
\label{def:memory_constraint_with_comp}
    Let $\Ocal:\Scal\to \Rcal$ be an oracle. We suppose that we are given a decoding function $\phi:\{0,1\}^\star \to \Scal$ and a discretization function $\psi:\Rcal\times \Nbb\to \{0,1\}^\star$ such that $\psi(r,n)\in\{0,1\}^n$ for all $r\in\Rcal$. An $M$-bit memory-constrained algorithm with computations is only allowed to use an $M$-bit memory in $\{0,1\}^M$ even during computations. The algorithm has three special memory placements $Q,N,R$. Say the contents of $Q$ and $N$ are $q$ and $n$ respectively. To make a query, $R$ must contain at least $n$ bits. The algorithm submits $q$ to the encoder which then submits the query $\phi(q)$ to the oracle. If $r=\Ocal(\phi(q))$ is the oracle response, the discretization function then writes $\psi(r,n)$ in the placement $R$.
\end{definition}

\paragraph{Feasibility problem.} In this problem, the goal is to find a point $\mb x\in Q$, where $Q\subset \Ccal_d:=[-1,1]^d$ is a convex set. We choose the cube $[-1,1]^d$ as prior bound for convenience in our later algorithms, but the choice of norm for this prior ball can be arbitrary and does not affect our results. The algorithm has access to a \emph{separation oracle} $O_S:\Ccal_d\to \{\mathsf{Success}\}\cup\Rbb^d$, that for a query $\mb x\in\Rbb^d$ either returns $\mathsf{Success}$ if $\mb x\in Q$, or a separating hyperplane $\mb g\in\Rbb^d$, i.e., such that $\mb g^\top \mb x< \mb g^\top \mb x'$ for any $\mb x'\in Q$. We suppose that the separating hyperplanes are normalized, $\|\mb g\|_2=1$. An algorithm solves the feasibility problem with accuracy $\epsilon$ if the algorithm is successful for any feasibility problem such that $Q$ contains an $\epsilon$-ball $B_d(\mb x^\star, \epsilon)$ for $\mb x^\star\in \Ccal_d$.

As an important remark, this formulation asks that the separation oracle is consistent over time: when queried at the exact same point $\mb x$, the oracle always returns the same separation vector. In this context, we can use the natural decoding function $\phi$ which takes as input $d$ sequences of bits and outputs the vector with coordinates given by the sequences interpreted in base 2. Similarly, the natural discretization function $\psi$ takes as input the separation hyperplane $\mb g$ and outputs a discretized version up to the desired accuracy. From now, we can omit these implementation details and consider that the algorithm can query the oracle for discretized queries $\mb x$, up to specified rounding errors.

\begin{remark}
    An algorithm for the feasibility problem with accuracy $\epsilon/(2\sqrt d)$ can be used for first-order convex optimization. Suppose one aims to minimize a $1$-Lipschitz convex function $f$ over the unit ball, and output an $\epsilon$-suboptimal solution, i.e., find a point $\mb x$ such that $f(\mb x)\leq \min_{\mb y\in B_d(0,1)}f(\mb y)+ \epsilon$. A separation oracle for $Q=\{\mb x:f(\mb x)\leq \min_{\mb y\in B_d(0,1)}f(\mb y)+ \epsilon\}$ is given at a query $\mb x$ by the subgradient information from the first-order oracle: $-\frac{\partial f(\mb x)}{\|\partial f(\mb x)\|}$. Its computation can also be carried memory-efficiently up to rounding errors since if $\|\partial f(\mb x)\|\leq \epsilon/(2\sqrt d)$, the algorithm can return $\mb x$ and already has the guarantee that $\mb x$ is an $\epsilon$-suboptimal solution ($\Ccal_d$ has diameter $2\sqrt d$). Notice that because $f$ is $1$-Lipschitz, $Q$ contains a ball of radius $\epsilon/(2\sqrt d)$ (the factor $1/(2\sqrt d)$ is due to potential boundary issues). Hence, it suffices to run the algorithm for the feasibility problem while keeping in memory the queried point with best function value.
\end{remark}

\subsection{Known trade-offs between oracle-complexity and memory}

\paragraph{Known lower-bound trade-offs.} All known lower bound apply to the more general class of memory-constrained algorithms without computational constraints given in \cref{def:memory_constraint_wo_comp}. \cite{Nemirovsky1983} first showed that $\Ocal(d\ln\frac{1}{\epsilon})$ queries are needed for solving convex optimization to ensure that one finds an $\epsilon$-suboptimal solution. Further, $\Ocal(d\ln\frac{1}{\epsilon})$ bits of memory are needed even just to output a solution in the unit ball with $\epsilon$ accuracy \cite{woodworth2019open}. These historical lower bounds apply in particular to the feasibility problem and are represented in the pictures of \cref{fig:tradeoffs} as the dashed pink region.

More recently, \cite{marsden2022efficient} showed that achieving both optimal oracle-complexity and optimal memory is impossible for convex optimization. They show that a possibly randomized algorithm with $d^{1.25-\delta}$ bits of memory makes at least $\tilde\Omega(d^{1+4\delta/3})$ queries. This result was extended for deterministic algorithms in \cite{blanchard2023quadratic} which shows that a deterministic algorithm with $d^{1-\delta}$ bits of memory makes $\tilde\Omega(d^{1+\delta/3})$ queries. For the feasibility problem, they give an improved trade-off: any deterministic algorithm with $d^{2-\delta}$ bits of memory makes $\tilde\Omega(d^{1+\delta})$ queries. These trade-offs are represented in the left picture of \cref{fig:tradeoffs} as the pink, red, and purple solid region, respectively.

\paragraph{Known upper-bound trade-offs.} Prior to this work, to the best of our knowledge only two algorithms were known in the oracle-complexity/memory landscape. First, cutting-plane algorithms achieve the optimal oracle-complexity $\Ocal(d\ln\frac{1}{\epsilon})$ but use quadratic memory. The memory-constrained (MC) center-of-mass method analyzed in \cite{woodworth2019open} uses in particular $\Ocal(d^2\ln^2\frac{1}{\epsilon})$ memory. Instead, if one uses Vaidya's method which only needs to store $\Ocal(d)$ cuts instead $\Ocal(d\ln\frac{1}{\epsilon})$, we show that one can achieve $\Ocal(d^2\ln\frac{1}{\epsilon})$ memory. These algorithms only use the separation oracle and hence apply to both convex optimization and the feasibility problem. On the other hand, the memory-constrained gradient descent for convex optimization \cite{woodworth2019open} uses the optimal $\Ocal(d\ln\frac{1}{\epsilon})$ memory but makes $\Ocal(\frac{1}{\epsilon^2})$ iterations. While the analysis in \cite{woodworth2019open} is only carried for convex optimization, we can give a modified proof showing that gradient descent can also be used for the feasibility problem.

\subsection{Other related works}

Vaidya's method \cite{vaidya1996new,ramaswamy1995long, anstreicher1997vaidya,anstreicher1998towards} and the variant \cite{lee2015faster} that we use in our algorithms, belong to the family of cutting-plane methods. Perhaps the simplest example of an algorithm in this family is the center-of-mass method, which achieves the optimal $\Ocal(d\ln\frac{1}{\epsilon})$ oracle-complexity but is computationally intractable, and the only known random walk-based implementation \cite{bertsimas2004solving} has computational complexity $\Ocal(d^7\ln\frac{1}{\epsilon})$. Another example is the ellipsoid method, which has suboptimal $\Ocal(d^2\ln\frac{1}{\epsilon})$ query complexity, but has an improved computational complexity $\Ocal(d^4\ln\frac{1}{\epsilon})$. \cite{BubeckConvex} pointed out that Vaidya's method achieves the best of both worlds by sharing the $\Ocal(d\ln\frac{1}{\epsilon})$ optimal query complexity of the center-of-mass, and achieving a computational complexity of $\Ocal(d^{1+\omega}\ln\frac{1}{\epsilon})$\footnote{$\omega<2.373$ is the exponent of matrix multiplication}. In a major breakthrough, this computational complexity was improved to $\Ocal(d^{3}\ln^{3}\frac{1}{\epsilon})$ in \cite{lee2015faster}, then to $\Ocal(d^{3}\ln \frac{1}{\epsilon})$ in \cite{Jiang2020}. We refer to \cite{BubeckConvex,lee2015faster,Jiang2020} for more detailed comparisons of these algorithms. 

Another popular convex optimization algorithm that requires quadratic memory is the Broyden– Fletcher– Goldfarb– Shanno (BFGS) algorithm \cite{Shanno1970,BROYDEN1970,Fletcher1970,Goldfarb1970}, which stores an approximated inverse Hessian matrix as gradient preconditioner. Several works aimed to reduce the memory usage of BFGS; in particular, the limited memory BFGS (L-BFGS) stores a few vectors instead of the entire approximated inverse Hessian matrix \cite{Nocedal1980,liu_limited_1989}. However, it is still an open question whether even the original BFGS converges for non-smooth convex objectives \cite{lewis_nonsmooth_2013}.

Lying at the other extreme of the oracle-complexity/memory trade-off is gradient descent, which achieves the optimal memory usage but requires significantly more queries than center-of-mass or Vaidya's method in the regime $\epsilon\lesssim \frac{1}{\sqrt{d}}$. There is a rich literature of works aiming to speed up gradient descent, such as the optimized gradient method \cite{drori_performance_2014,Donghwan2014}, Nesterov's Acceleration \cite{Nes83}, the triple momentum method \cite{Scoy2017}, geometric descent \cite{bubeck2015geometric}, quadratic averaging \cite{Drusvyatskiy2018}, the information-theoretic exact method \cite{taylor_optimal_2023}, or Big-Step-Little-Step method \cite{kelner22a}. Interested readers can find a comprehensive survey on acceleration methods in \cite{Aspremont2021}. However, these acceleration methods usually require additional smoothness or strong convexity assumptions (or both) on the objective function, due to the known $\Omega(\frac{1}{\epsilon^2})$ query lower bound in the large-scale regime $\epsilon \gtrsim \frac{1}{\sqrt d}$ for any first order method where the query points lie in the span of the subgradients of previous query points \cite{nesterov2003introductory}. 

Besides accelerating gradient descent, researchers have investigated more efficient ways to leverage subgradients obtained in previous iterations. Of interest are bundle methods \cite{ben-tal_non-euclidean_2005,lan_bundle-level_2015,lemarechal_new_1995}, that have found a wide range of applications \cite{Choon2010,Le2007}. In their simplest form, they minimize the sum of the maximum of linear lower bounds constructed using past oracle queries, and a regularization term penalizing the distance from the current iteration variable. Although the theoretical convergence rate of the bundle method is the same as that of gradient descent, in practice, bundle methods can benefit from previous information and substantially outperform gradient descent \cite{ben-tal_non-euclidean_2005}.

The increasing size of optimization problems has also motivated the development of communication-efficient optimization algorithms in distributed settings such as \cite{lan_communication-efficient_2020, Reddi2016AIDEFA,shamir14,Smith2017,Mota2013,Zhang2012Communication,Wangni2018,wang17aMemo}. Moreover, recent works have explored the trade-off between sample complexity and memory/communication complexity for learning problems under the streaming model, with notable contributions including \cite{Braverman,dagan19b,dagan18a,Raz2017,Sharan2019,Moshkovitz2017}.

\subsection{Organization of the paper}

In \cref{section:main_results} we state our main results, in particular, we give the oracle-complexity and memory guarantees of our algorithms. Although these are also memory-constrained for computations, we first consider in \cref{section:algorithm_wo_comp} simpler algorithms for memory-constrained optimization without computational concerns. We show how to constrain the memory usage during computations as well in \cref{section:algorithm_with_comp}. In \cref{section:improved_lower_bounds} we describe how to improve by a $\ln\frac{1}{\epsilon}$ factor the known lower-bound trade-offs. For completeness, we give a convergence proof for the memory-constrained gradient descent for feasibility problems \cref{section:gradient_descent}. Last, we conclude in \cref{sec:conclusion}.

\section{Main results}
\label{section:main_results}

We first check that the memory-constrained gradient descent method solves feasibility problems. This was known for convex optimization \cite{woodworth2019open} and the same algorithm with a modified proof gives the following result.

\begin{proposition}\label{prop:mc_gd}
    The memory-constrained gradient descent algorithm solves the feasibility problem with accuracy $\epsilon\leq \frac{1}{\sqrt d}$ using $\Ocal(d\ln\frac{1}{\epsilon})$ bits of memory and $\Ocal(\frac{1}{\epsilon^2})$ calls to the separation oracle.
\end{proposition}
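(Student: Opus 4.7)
The natural candidate algorithm is projected subgradient descent where the subgradient is replaced by the unit separation vector: starting from $\mb x_0 = \bzero$ (or any point in $\Ccal_d$), iterate
\[
\mb x_{t+1} = \Pi_{\Ccal_d}\paren{\mb x_t - \eta\, \mb g_t},
\]
where $\mb g_t = O_S(\mb x_t)$ and we halt as soon as the oracle returns $\mathsf{Success}$. The only persistent state is the current iterate $\mb x_t$, stored up to a precision $\mathrm{poly}(\epsilon)$ sufficient to make the descent argument go through; this costs $\Ocal(d\ln\frac{1}{\epsilon})$ bits. The projection onto $\Ccal_d$ is coordinate-wise clipping to $[-1,1]$ and can be carried out in-place without additional memory overhead.

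The key geometric observation is that since $Q$ contains the ball $B_d(\mb x^\star,\epsilon)$, for the unit vector $\mb g_t$ the point $\mb x^\star - \epsilon\, \mb g_t$ lies in $Q$. Applying the separation property at $\mb x_t$ yields
\[
\langle \mb g_t, \mb x_t\rangle \;<\; \langle \mb g_t, \mb x^\star - \epsilon \mb g_t\rangle \;=\; \langle \mb g_t, \mb x^\star\rangle - \epsilon,
\]
i.e.\ the margin bound $\langle \mb g_t, \mb x^\star - \mb x_t\rangle > \epsilon$. Combined with the non-expansiveness of the projection onto the convex set $\Ccal_d$ (which contains $\mb x^\star$) and $\|\mb g_t\|_2 = 1$, this gives the standard one-step contraction
\[
\norm{\mb x_{t+1} - \mb x^\star}_2^2 \;\leq\; \norm{\mb x_t - \eta \mb g_t - \mb x^\star}_2^2 \;\leq\; \norm{\mb x_t - \mb x^\star}_2^2 - 2\eta\epsilon + \eta^2.
\]
Choosing $\eta = \epsilon$ yields $\|\mb x_{t+1}-\mb x^\star\|_2^2 \leq \|\mb x_t-\mb x^\star\|_2^2 - \epsilon^2$, so after $T$ iterations without termination we must have $T\epsilon^2 \leq \|\mb x_0-\mb x^\star\|_2^2$. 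In the regime $\epsilon \leq 1/\sqrt d$ this bound is of order $1/\epsilon^2$, which gives the claimed query complexity.

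The remaining obstacle (and the only substantive difference from the convex-optimization version in \cite{woodworth2019open}) is to carry out the update in constant-bit arithmetic without degrading the margin bound. The strategy is to pick a discretization scale $\delta = c\epsilon$ for a small constant $c$, store $\mb x_t$ coordinate-wise with that precision (so the memory is $\Ocal(d\ln\frac{1}{\epsilon})$), discretize the oracle response $\mb g_t$ coordinate-wise to the same precision, and then round $\Pi_{\Ccal_d}(\mb x_t - \eta \mb g_t)$ back onto the grid. The rounding error per coordinate is $\Ocal(\delta)$, so the total perturbation in $\ell_2$ is $\Ocal(\delta\sqrt d)$, and with the constant $c$ chosen small enough this absorbs into the $-\epsilon^2$ term in the descent inequality, preserving a contraction of magnitude $\Omega(\epsilon^2)$. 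Since the oracle is assumed consistent, re-querying at the same discretized point always returns the same $\mb g_t$, so the analysis is unaffected by repeated queries. Putting these pieces together within the memory-with-computations model of \cref{def:memory_constraint_with_comp} yields the proposition.
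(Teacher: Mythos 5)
Your proof follows the same high-level route as the paper: exploit the margin $\mb g_t^\top(\mb x^\star-\mb x_t)\geq\epsilon$ coming from the $\epsilon$-ball inside $Q$, run a subgradient-style step of size $\Theta(\epsilon)$, obtain a per-step contraction of $\Theta(\epsilon^2)$ in $\|\mb x_t-\mb x^\star\|^2$, and conclude $\Ocal(1/\epsilon^2)$ queries. Two details, however, need repair.

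There is an internal sign inconsistency. With the paper's oracle convention ($\mb g^\top\mb x<\mb g^\top\mb x'$ for all $\mb x'\in Q$), your margin bound $\langle\mb g_t,\mb x^\star-\mb x_t\rangle>\epsilon$ is correct, but then the contracting update is $\mb x_{t+1}=\mb x_t+\eta\mb g_t$, not $\mb x_t-\eta\mb g_t$. With the step you wrote one has
\begin{equation*}
\|\mb x_t-\eta\mb g_t-\mb x^\star\|^2=\|\mb x_t-\mb x^\star\|^2+2\eta\,\langle\mb g_t,\mb x^\star-\mb x_t\rangle+\eta^2\;\geq\;\|\mb x_t-\mb x^\star\|^2+2\eta\epsilon,
\end{equation*}
so the iterate moves \emph{away} from $\mb x^\star$ and the claimed inequality $\|\mb x_{t+1}-\mb x^\star\|^2\leq\|\mb x_t-\mb x^\star\|^2-2\eta\epsilon+\eta^2$ does not follow. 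This is an easy fix (flip the sign of the step, or equivalently redefine $\mb g_t$), but as written the chain of inequalities is wrong.

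The more substantive gap is the discretization scale. You take a per-coordinate grid $\delta=c\epsilon$ for a \emph{universal constant} $c$ and claim the resulting $\ell_2$ perturbation $\Ocal(\delta\sqrt d)$ can be absorbed into the $-\epsilon^2$ contraction. But a shift of $\mb x_{t+1}$ by $\nu=\Ocal(\delta\sqrt d)=\Ocal(c\epsilon\sqrt d)$ changes $\|\mb x_{t+1}-\mb x^\star\|^2$ by roughly $2\nu\|\mb x_t-\mb x^\star\|+\nu^2$, which is $\Omega(c\epsilon\sqrt d)$ even if the iterates stay in the unit ball. For this to be dominated by $\epsilon^2$ one needs $c=\Ocal(\epsilon/\sqrt d)$, i.e.\ $c$ cannot be a constant. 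The paper's proof instead sets the total $\ell_2$ rounding accuracy to $\eta=\epsilon^2/40$ — a per-coordinate scale of $\Theta(\epsilon^2/\sqrt d)$ — which makes the rounding contribution $\Ocal(\eta)=\Ocal(\epsilon^2)$ with a small enough constant. Since $\epsilon\leq 1/\sqrt d$, one still has $\ln\frac{\sqrt d}{\epsilon^2}=\Ocal(\ln\frac{1}{\epsilon})$, so the memory bound $\Ocal(d\ln\frac{1}{\epsilon})$ is preserved. Your architecture and query bound are otherwise fine, but the precision must be taken at scale $\text{poly}(\epsilon/d)$, not $\Theta(\epsilon)$, for the descent inequality to close.
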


Our main contribution is a class of algorithms based on Vaidya's cutting-plane method that provide a query-complexity / memory tradeoff. More precisely, we show the following.

\begin{theorem}\label{thm:main}
    For any $1\leq p\leq d$, there is a deterministic first-order algorithm that solves the feasibility problem for accuracy $\epsilon\leq \frac{1}{\sqrt d}$, using $\Ocal(\frac{d^2}{p}\ln \frac{1}{\epsilon})$ bits of memory (including during computations), with $\Ocal((C\frac{d}{p}\ln \frac{1}{\epsilon})^p)$ calls to the separation oracle, and computational complexity $\Ocal((C(\frac{d}{p})^{1+\omega}\ln\frac{1}{\epsilon})^p)$, where $C\geq 1$ is a universal constant. 
\end{theorem}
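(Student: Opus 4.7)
The plan is induction on the number of blocks $p$. For the base case $p=1$, the algorithm is a single run of the memory-conscious Vaidya variant from \cite{lee2015faster}: it maintains at most $\Ocal(d)$ active half-space cuts and a vector of leverage scores, whose entries can each be stored with $\Ocal(\ln\frac{1}{\epsilon})$ bits without degrading the convergence guarantee. The resulting memory footprint is $\Ocal(d^2 \ln\frac{1}{\epsilon})$ bits, the standard analysis yields $\Ocal(d \ln \frac{1}{\epsilon})$ oracle calls, and each iteration costs $\Ocal(d^{1+\omega})$ arithmetic operations.

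For the inductive step, suppose the theorem holds at level $p-1$. Partition the $d$ coordinates into an outer block of size $d_1 := d/p$ and an inner block of size $d_2 := d(p-1)/p$, and write a candidate as $(\mb{x},\mb{y}) \in \Rbb^{d_1}\times\Rbb^{d_2}$. The outer procedure runs the base-case algorithm (constrained Vaidya on $d_1$ coordinates) on the projection $Q_{\mb{x}} := \{\mb{x} : \exists \mb{y},\ (\mb{x},\mb{y}) \in Q\}$, which is convex and still contains an $\epsilon$-ball. The separation oracle for $Q_{\mb{x}}$ is simulated at each outer query $\mb{x}$ by a recursive invocation of the $(p-1)$-level algorithm on the slice $\{\mb{y} : (\mb{x},\mb{y})\in Q\}$: either the recursion finds a feasible $\mb{y}$ and we return $(\mb{x},\mb{y})$, or, when the inner procedure exhausts its polytope without success, the analytic-center certificate of its final polytope (a convex combination of the accumulated separating hyperplanes, projected onto outer coordinates) yields an approximate separation vector for $\mb{x}$ from $Q_{\mb{x}}$ with precision controllable by setting the inner target accuracy to $\epsilon' = \text{poly}(\epsilon/d)$.

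The recurrences then give the stated bounds. Since only one recursive call is alive at any time, memory satisfies
\begin{equation*}
M(p,d) = \Ocal\!\left(\tfrac{d^2}{p^2}\ln \tfrac{1}{\epsilon}\right) + M\!\left(p-1,\tfrac{d(p-1)}{p}\right),
\end{equation*}
which telescopes to $\Ocal(\frac{d^2}{p}\ln \frac{1}{\epsilon})$ because $\sum_{i=1}^p (d/p)^2 = d^2/p$. Since the outer Vaidya performs $\Ocal(\frac{d}{p}\ln \frac{1}{\epsilon'})$ iterations and each triggers one recursive call, the oracle complexity obeys
\begin{equation*}
T(p,d) = \Ocal\!\left(\tfrac{d}{p}\ln \tfrac{1}{\epsilon}\right) \cdot T(p-1,d_2) = \Ocal\!\left((C\tfrac{d}{p}\ln \tfrac{1}{\epsilon})^p\right),
\end{equation*}
provided $C$ absorbs the per-level Vaidya constants and the blow-up $\ln\frac{1}{\epsilon'} = \Ocal(\ln\frac{1}{\epsilon})$, valid in the target regime $\ln\frac{1}{\epsilon}\gtrsim\ln d$. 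The compute bound follows identically by replacing each outer-iteration oracle cost by the $\Ocal((d/p)^{1+\omega})$ cost of a Vaidya linear-algebra update.

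The main obstacle is justifying the approximate-separator construction at each recursion level. Two ingredients are required: (i) a quantitative statement that when the inner recursion fails to find a feasible $\mb{y}$, the analytic-center certificate it produces is a genuine $\epsilon'$-approximate separator for $\mb{x}$ from $Q_{\mb{x}}$ --- this is the feasibility analogue of the partial-minimization subgradient identity exploited in \cite{lee2015faster} --- and (ii) a robustness statement for Vaidya's method certifying that $\epsilon'$-approximate cuts still shrink the working polytope geometrically per iteration. Tracking both facts through the $p$ levels of recursion incurs only a constant-factor loss at each level, which is absorbed into the universal constant $C$.
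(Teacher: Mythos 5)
Your approach is essentially the paper's: partition the coordinates into blocks of size roughly $d/p$, run a memory-constrained cutting-plane method on the outer block, and use the dual certificate of the inner recursion's terminal polyhedron, restricted to the outer coordinates, as an approximate separation vector. The observation that the projection $Q_{\mb x}$ still contains an $\epsilon$-ball, the choice $\epsilon'=\mathrm{poly}(\epsilon/d)$, and the query and computation recurrences all match the paper's.

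However, your memory recurrence has a concrete hole you do not address, and without resolving it the $\Ocal(\frac{d^2}{p}\ln\frac{1}{\epsilon})$ bound does not follow. Forming the aggregate $\sum_l \lambda^\star_l(\mb g_l)_{\mb x}$ requires the outer-block components of the separating hyperplanes associated with the \emph{surviving} inner constraints, but the inner Vaidya run stores only the inner-block components of its cuts, and the dual weights $\mb\lambda^\star$ are only known after the inner run terminates. If one naively records the outer components of every response during the $\Theta((d/p)\ln\frac{1}{\epsilon})$-iteration inner run, the per-level cost jumps to $\Ocal((d/p)^2\ln^2\frac{1}{\epsilon})$ bits, i.e., an extra $\ln\frac{1}{\epsilon}$ factor that defeats the purpose. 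The paper resolves this in $\mathsf{ApproxSeparationVector}$ by running the inner cutting-plane procedure \emph{twice}: the first pass produces the terminal polyhedron and dual weights, which are stored (only scalar weights and indices), the polyhedron slot is then reset, and the second pass deterministically re-executes the inner run while accumulating the weighted outer components on the fly into a single working vector. This re-computation crucially exploits the oracle's consistency. (An alternative, which you would also need to justify, is to store the outer-component vector alongside each constraint and discard it when Vaidya drops the constraint, using the fact that dropped constraints are never re-added.) You should state which of these you intend and show it respects your telescoping recurrence. Separately, your base case asserts that the Lee--Sidford--Wong variant can be run ``including during computations'' in $\Ocal(d^2\ln\frac{1}{\epsilon})$ bits ``without degrading the convergence guarantee''; that reference optimizes runtime, not working memory, and establishing the claimed numerical stability (bounded conditioning of $\mb Q$, bounded iterates, tolerance to discretized leverage scores and oracle responses) is a nontrivial lemma rather than a citation.
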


To better understand the implications of this result, it is useful to compare the provided class of algorithms to the only two algorithms known in the oracle-complexity/memory tradeoff landscape: the memory-constrained center-of-mass method and the memory-constrained gradient descent \cite{woodworth2019open}.

We begin with a comparison to the center-of-mass-based method. For $p=1$, the resulting procedure, which is essentially a memory-constrained Vaidya's algorithm has optimal oracle-complexity $\Ocal(d\ln\frac{1}{\epsilon})$ and uses $\Ocal(d^2 \ln\frac{1}{\epsilon})$ bits of memory. This improves by a $\ln\frac{1}{\epsilon}$ factor the memory usage of the center-of-mass-based algorithm provided in \cite{woodworth2019open}, which used $\Ocal(d^2\ln^2\frac{1}{\epsilon})$ memory and had the same optimal oracle-complexity.

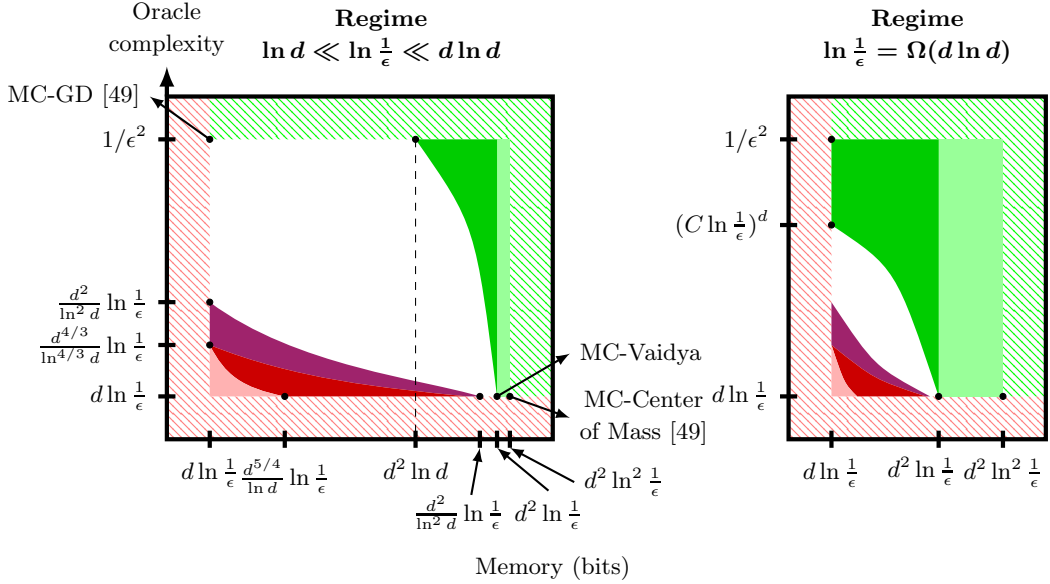
\begin{figure}[t!]
  \centering
    \begin{tikzpicture}[scale=0.57]
    
    \draw[thick,draw=none,pattern=north west lines,pattern color=red!50] (-1,-1) -- (-1,7) -- (0,7) -- (0,0) -- (8,0) -- (8,-1);
    
    \draw[thick,draw=none,pattern=north west lines,pattern color=green] (0,7) -- (0,6)-- (7,6)--(7,0) -- (8,0) -- (8,7);

    \draw [fill=green!40, draw=none] (7,0) -- (7,6) -- (6.7,6) -- (6.7,0);

    \draw [fill=green!80!black, draw=none] (6.7,0) .. controls (6.2,4) .. (4.8,6) -- (6.7,6);

    
    
    \draw[ultra thick,black] (-1,-1) rectangle (8,7);

    \draw[ultra thick,black, -latex] (-1,7) -- (-1,7.7) node[above, align=center]{\small Oracle\\ \small complexity};

    \draw[draw=none,fill=RedViolet] (0,2.2) .. controls (2,0.7) and (5,0.4) .. (6.3,0) -- (6,0) .. controls  (4,0.2) and (2,0.4) .. (0,1.2) ;
    
    \draw[draw=none,fill=red!80!black] (0,1.2) .. controls (0.35,0.5) and (0.75,0.2) .. (7/4,0) -- (6,0) .. controls  (4,0.2) and (2,0.4) .. (0,1.2) ;
    \draw[draw=none,fill=red!30] (0,0) -- (0,1.2) .. controls (0.35,0.5) and (0.75,0.2) .. (7/4,0) ;

     \filldraw (7/4,0) circle (2pt);

    \draw[ultra thick,black] (0,-1.2) node[below]{\small $d\ln \frac{1}{\epsilon}$}  -- (0,-0.8);

    \draw[ultra thick,black] (7/4,-1.2) node[below]{\small $\frac{d^{5/4}}{\ln d}\ln\frac{1}{\epsilon}$}  -- (7/4,-0.8);


    \draw[ultra thick,black] (4.8,-1.2) node[below]{\small $d^2\ln d$} -- (4.8,-0.8);

    \draw[ultra thick,black] (7,-1.2)   -- (7,-0.8);

    \draw[ultra thick,black] (6.7,-1.2)  -- (6.7,-0.8);

    \draw[ultra thick,black] (6.3,-1.2)  -- (6.3,-0.8);

    \draw[ultra thick,black] (-1.2,0) node[left]{\small $d\ln\frac{1}{\epsilon}$}  -- (-0.8,0);
    \draw[ultra thick,black] (-1.2,1.2) node[left]{\small $\frac{d^{4/3}}{\ln^{4/3} d}\ln\frac{1}{\epsilon}$}  -- (-0.8,1.2);

    \filldraw (0,1.2) circle (2pt);
    
    \draw[ultra thick,black] (-1.2,2.2) node[left]{\small $\frac{d^{2}}{\ln^2 d}\ln\frac{1}{\epsilon}$}  -- (-0.8,2.2);

    \filldraw (0,2.2) circle (2pt);
    
    \draw[ultra thick,black] (-1.2,6) node[left]{\small $1/\epsilon^2$}  -- (-0.8,6);

    \filldraw (0,6) circle (2pt);
    \draw [thick,black,-latex] (0,6) -- (-1.4,7) node[left,align=center]{\small MC-GD \cite{woodworth2019open}} ;

    \draw [thick,black,-latex] (7,0) -- (8.4,-0.5)node[right, align=center]{\small MC-Center\\ \small 
    of Mass \cite{woodworth2019open}};
    \filldraw (7,0) circle (2pt);


    \draw [thick,black,-latex] (6.7,0) -- (8.4,1)node[right]{\small MC-Vaidya};
    \filldraw (6.7,0) circle (2pt);


    \filldraw (6.3,0) circle (2pt);


    \filldraw (4.8,6) circle (2pt);

    \draw [black,dashed] (4.8,6) -- (4.8,-1);

    \draw [thick,black,latex-] (7,-1.2) -- (8.5,-2)node[right]{\small $d^2\ln^2\frac{1}{\epsilon}$};

    \draw [thick,black,latex-] (6.7,-1.2) -- (7.9,-2.2)node[below]{\small $d^2\ln\frac{1}{\epsilon}$};


    \draw [thick,black,latex-] (6.3,-1.2) -- (6.1,-2.3);
    \draw (5.8,-2.1)node[below]{\small $\frac{d^{2}}{\ln^2 d}\ln\frac{1}{\epsilon}$};


    \draw (8,-4) node{\small Memory (bits)};



    \draw (4,7.5) node[above,align=center]{\small \textbf{Regime}\\ \small $\mb{\ln d\ll \ln\frac{1}{\epsilon}\ll d\ln d}$};


    \draw[thick,draw=none,pattern=north west lines,pattern color=red!50] (14.5-1,-1) -- (14.5-1,7) -- (14.5+0,7) -- (14.5+0,0) -- (14.5+5,0) -- (14.5+5,-1);
    
    \draw[thick,draw=none,pattern=north west lines,pattern color=green] (14.5+0,7) -- (14.5+0,6) -- (14.5+4,6) -- (14.5+4,0) -- (14.5+5,0) -- (14.5+5,7);

    \draw [fill=green!40, draw=none] (14.5+2.5,0) -- (14.5+2.5,6) -- (14.5+4,6) -- (14.5+4,0);

    \draw [fill=green!80!black, draw=none] (14.5+2.5,0) .. controls (14.5+1.5,3) .. (14.5+0,4) -- (14.5+0,6) -- (14.5+2.5,6);

    \draw[draw=none,fill=RedViolet] (14.5+0,2.2) .. controls (14.5+1,0.7) .. (14.5+2.3,0) -- (14.5+2.2,0) .. controls (14.5+1,0.4) .. (14.5+0,1.2) ;
    
    \draw[draw=none,fill=red!80!black] (14.5+0,1.2) .. controls (14.5+0.3,0.3) .. (14.5+0.6,0)  -- (14.5+2.2,0) .. controls (14.5+1,0.4) .. (14.5+0,1.2) ;
    \draw[draw=none,fill=red!30] (14.5+0,0) -- (14.5+0,1.2) .. controls (14.5+0.3,0.3) .. (14.5+0.6,0) ;


    \draw[ultra thick,black] (14.5-1,-1) rectangle (14.5+5,7);

    \draw[ultra thick,black] (14.5+0,-1.2) node[below]{\small $d\ln \frac{1}{\epsilon}$}  -- (14.5+0,-0.8);

    \draw[ultra thick,black] (14.5-1.2,0) node[left]{\small $d\ln\frac{1}{\epsilon}$}  -- (14.5-0.8,0);
    \draw[ultra thick,black] (14.5-1.2,6) node[left]{\small $1/\epsilon^2$}  -- (14.5-0.8,6);

    \filldraw (14.5+0,6) circle (2pt);

    \filldraw (14.5+4,0) circle (2pt);

    \draw [ultra thick,black] (14.5+4,-0.8) -- (14.5+4,-1.2)node[below] {\small $d^2\ln^2\frac{1}{\epsilon}$};



    \filldraw (14.5+2.5,0) circle (2pt);

    \draw [ultra thick,black] (14.5+2.5,-0.8) -- (14.5+2.5,-1.2);
    \draw  (14.5+2.1,-1.2) node[below]{\small $d^2\ln\frac{1}{\epsilon}$};

    \draw [ultra thick,black] (14.5-0.8,4) -- (14.5-1.2,4) node[left]{\small $(C\ln\frac{1}{\epsilon})^d $};
    \filldraw (14.5+0,4) circle (2pt);

    \draw (14.5+2,7.5) node[above,align=center]{\small \textbf{Regime}\\ \small $\mb{\ln\frac{1}{\epsilon}=\Omega(d\ln d)}$};

    \end{tikzpicture}
   \caption{Trade-offs between available memory and first-order oracle-complexity for the feasibility problem over the unit ball. MC=Memory-constrained. GD=Gradient Descent. The left picture corresponds to the regime $\epsilon \gg d^{-\Omega(d)}$ and $\epsilon \leq 1/\text{poly}(d)$ and the right picture represents the regime $\epsilon\leq d^{-\Ocal(d)}$. For both figures, the dashed pink "L" (resp. green inverted "L") region corresponds to historical lower (resp. upper) bounds for randomized algorithms. The solid pink (resp. red) lower bound tradeoff is due to \cite{marsden2022efficient} (resp. \cite{blanchard2023quadratic}) for randomized algorithms (resp. deterministic algorithms). The purple region is a lower bound tradeoff for the feasibility problem for accuracy $\epsilon$ and deterministic algorithms \cite{blanchard2023quadratic}. All these lower-bound trade-offs are represented with their $\ln\frac{1}{\epsilon}$ dependence (\cref{thm:improved_lower_bounds}). We use memory-constrained Vaidya's method to gain a factor $\ln\frac{1}{\epsilon}$ in memory compared to memory-constrained center-of-mass \cite{woodworth2019open}, which gives the light green region, and a class of algorithms represented in dark green, that allows trading query-complexity for an extra $\ln \frac{1}{\epsilon}/\ln d$ factor saved in memory (\cref{thm:main}). The dark green dashed region in the left figure emphasizes that the area covered by our class of algorithms depends highly on the regime for the accuracy $\epsilon$: the resulting improvement in memory is more significant as $\epsilon$ is smaller. In the regime when $\epsilon \leq d^{-\Ocal(d)}$ (right figure), our class of algorithms improves over the oracle-complexity of gradient descent while keeping the optimal memory $\Ocal(d\ln\frac{1}{\epsilon})$.}
  \label{fig:tradeoffs}
\end{figure}

Next, we recall that the memory-constrained gradient descent method used the optimal number $\Ocal(d\ln\frac{1}{\epsilon})$ bits of memory (including for computations), and a sub-optimal $\Ocal(\frac{1}{\epsilon^2})$ oracle-complexity. While our class of algorithms uses less memory for increasing value of $p$, the oracle-complexity is exponential in $p$. This significantly restricts the values of $p$ for which our results are advantageous compared to the oracle-complexity of gradient descent. The range of application of \cref{thm:main} is given in the next result.

\begin{corollary}  
    The algorithms given in \cref{thm:main} effectively provide a tradeoff for $p\leq \Ocal(\frac{\ln \frac{1}{\epsilon}}{\ln d} \vee d)$. Precisely, this provides a tradeoff between 
    \begin{enumerate}
        \item using $\Ocal(d^2\ln\frac{1}{\epsilon})$ memory with optimal $\Ocal(d\ln\frac{1}{\epsilon})$ oracle-complexity, and
        \item using $\Ocal(d^2\ln d\wedge d\ln\frac{1}{\epsilon})$ memory with $\Ocal(\frac{1}{\epsilon^2}\vee (C\ln\frac{1}{\epsilon})^d)$ oracle-complexity.
    \end{enumerate}
\end{corollary}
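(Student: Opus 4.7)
The plan is to directly substitute the two endpoints of the admissible parameter range into the memory and oracle-complexity bounds of \cref{thm:main}, with the bulk of the work being to identify the critical value of $p$ beyond which our class of algorithms is dominated by memory-constrained gradient descent (\cref{prop:mc_gd}). The first endpoint, $p=1$, is immediate: \cref{thm:main} yields memory $\Ocal(d^2\ln\tfrac{1}{\epsilon})$ and oracle-complexity $\Ocal(d\ln\tfrac{1}{\epsilon})$, which is exactly the first bullet.

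For the second endpoint, the key computation is to solve, up to constants, the inequality $(C\tfrac{d}{p}\ln\tfrac{1}{\epsilon})^p \leq 1/\epsilon^2$, i.e., the condition that the algorithm of \cref{thm:main} is no worse than \cref{prop:mc_gd} in terms of oracle calls. Taking logarithms yields $p\,\ln(Cd\ln\tfrac{1}{\epsilon}/p) \leq 2\ln\tfrac{1}{\epsilon}$. Plugging in the ansatz $p = c\,\ln\tfrac{1}{\epsilon}/\ln d$ reduces the inner logarithm to $\ln(Cd\ln d/c)$, which is $(1+o(1))\ln d$ for large $d$; the inequality then becomes $c(1+o(1))\ln\tfrac{1}{\epsilon} \leq 2\ln\tfrac{1}{\epsilon}$, valid for any sufficiently small universal constant $c>0$. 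Combining with the hard cap $p\leq d$ of \cref{thm:main}, the effective range of the parameter is $p \leq \Ocal(\ln\tfrac{1}{\epsilon}/\ln d \wedge d)$.

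Substituting the maximal admissible $p$ into the bounds of \cref{thm:main} then gives the second endpoint, split into two regimes. When $\ln\tfrac{1}{\epsilon}\leq d\ln d$, the critical value $p = \Theta(\ln\tfrac{1}{\epsilon}/\ln d)$ is binding, which yields memory $\Ocal(d^2\ln d)$ and oracle-complexity $\Ocal(1/\epsilon^2)$. When $\ln\tfrac{1}{\epsilon}\geq d\ln d$, the cap $p=d$ is binding, which yields memory $\Ocal(d\ln\tfrac{1}{\epsilon})$ and oracle-complexity $\Ocal((C\ln\tfrac{1}{\epsilon})^d)$. These two cases combine into the unified expressions $\Ocal(d^2\ln d \wedge d\ln\tfrac{1}{\epsilon})$ memory and $\Ocal(1/\epsilon^2 \vee (C\ln\tfrac{1}{\epsilon})^d)$ oracle-complexity claimed in the second bullet. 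The argument is essentially routine arithmetic on the two bounds of \cref{thm:main}; the only care needed is to track that the constant $c$ can be chosen uniformly in $d$ and $\epsilon$, which follows from $\ln(Cd\ln d/c)/\ln d \to 1$ as $d\to\infty$, so there is no substantive obstacle beyond the derivation in \cref{thm:main} itself.
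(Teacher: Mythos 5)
Your proposal is correct and follows essentially the same route as the paper's proof: both split on whether $\ln\frac{1}{\epsilon}$ is below or above $d\ln d$, then substitute the binding choice of $p$ (namely $\Theta(\ln\frac{1}{\epsilon}/\ln d)$ or $d$) into the bounds of \cref{thm:main}. You carry out the logarithmic bookkeeping more explicitly than the paper does, and your $p\leq\Ocal(\frac{\ln\frac{1}{\epsilon}}{\ln d}\wedge d)$ is in fact the right reading (the $\vee$ in the corollary statement appears to be a typo for $\wedge$), but the underlying argument is the same.
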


\begin{proof}
    Suppose $\epsilon\geq \frac{1}{d^d}$. Then, for some $p_{max} = \Theta( \frac{C\ln\frac{1}{\epsilon}}{2\ln d})\leq d$, the algorithm from \cref{thm:main} yields a $\Ocal(\frac{1}{\epsilon^2})$ oracle-complexity. On the other hand, if $\epsilon\leq \frac{1}{d^d}$, we can take $p_{max}=d$, which gives an oracle-complexity $\Ocal((C\ln\frac{1}{\epsilon})^d)$.
\end{proof}

Importantly, for $\epsilon\leq \frac{1}{d^{\Ocal(d)}}$, taking $p=d$ yields an algorithm that uses the optimal memory $\Ocal(d\ln\frac{1}{\epsilon})$ and has an improved query complexity over gradient descent. In this regime of small (virtually constant) dimension, for the same memory usage, gradient descent has a query complexity that is polynomial in $\epsilon$, $\Ocal(\frac{1}{\epsilon^2})$, while our algorithm has poly-logarithmic dependence in $\epsilon$, $\Ocal_d(\ln^d\frac{1}{\epsilon})$, where $\Ocal_d$ hides an exponential constant in $d$. It remains open whether this $\ln^d\frac{1}{\epsilon}$ dependence in the oracle-complexity is necessary.

To the best of our knowledge, this is the first example of an algorithm that improves over gradient descent while keeping its optimal memory usage in any regime where $\epsilon\leq \frac{1}{\sqrt d}$. While this improvement holds only in the exponential regime $\epsilon \leq \frac{1}{d^{\Ocal(d)}}$, \cref{thm:main} still provides a non-trivial trade-off whenever $\ln\frac{1}{\epsilon}\gg \ln d$, and improves over the known memory-constrained center-of-mass in the standard regime $\epsilon\leq \frac{1}{\sqrt d}$ \cite{woodworth2019open}. \cref{fig:tradeoffs} depicts the trade-offs in the two regimes mentioned earlier.

Last, we note that the lower-bound trade-offs presented in \cite{marsden2022efficient,blanchard2023quadratic} do not show an explicit dependence in the accuracy $\epsilon$. Especially in the regime when $\ln\frac{1}{\epsilon}\gg \ln d$, this yields sub-optimal lower bounds (in fact even in the regime $\epsilon=1/\text{poly}(d)$, our more careful analysis improves the lower bound on the memory by a $\ln d$ factor). We show with simple arguments that one can extend their results to include a $\ln\frac{1}{\epsilon}$ factor for both memory and query complexity.

\begin{theorem}\label{thm:improved_lower_bounds}
    For $\epsilon\leq 1/\text{poly}(d)$ and any $\delta\in[0,1]$,
    \begin{enumerate}
        \item any (potentially randomized) algorithm guaranteed to minimize $1$-Lipschitz convex functions over the unit ball with accuracy $\epsilon$ uses at least $d^{5/4-\delta}\ln\frac{1}{\epsilon}$ bits of memory or makes at least $\tilde\Omega(d^{1+4\delta/3}\ln\frac{1}{\epsilon})$ queries,
        \item any deterministic algorithm guaranteed to minimize $1$-Lipschitz convex functions over the unit ball with accuracy $\epsilon$ uses at least $d^{2-\delta}\ln\frac{1}{\epsilon}$ bits of memory or makes at least $\tilde\Omega(d^{1+\delta/3}\ln\frac{1}{\epsilon})$ queries,
        \item any deterministic algorithm guaranteed to solve the feasibility problem over the unit ball with accuracy $\epsilon$ uses at least $d^{2-\delta}\ln\frac{1}{\epsilon}$ bits of memory or makes at least $\tilde\Omega(d^{1+\delta}\ln\frac{1}{\epsilon})$ queries,
    \end{enumerate}
    where $\tilde\Omega$ hides $\ln^{\Ocal(1)} d$ factors.
\end{theorem}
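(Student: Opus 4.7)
The plan is to prove all three parts through a common black-box reduction that amplifies the existing lower bounds of \cite{marsden2022efficient,blanchard2023quadratic} by a factor of $\ln(1/\epsilon)$. Let $\epsilon_0 = 1/\text{poly}(d)$ be the accuracy at which those works establish their bounds, and denote the associated memory/query thresholds by $M_0$ and $T_0$. Set $k = \lfloor \ln(1/\epsilon)/\ln(1/\epsilon_0) \rfloor = \Theta(\ln(1/\epsilon))$. I would construct, for each of the three settings, a nested hard instance at accuracy $\epsilon$ with the property that any algorithm succeeding on it must sequentially solve $k$ nearly-independent copies of the known $\epsilon_0$-hard instance at geometrically decreasing scales.

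For the feasibility problem (part 3), the construction is direct: start with the $\epsilon_0$-hard feasibility instance $Q_1 \subset \Ccal_d$, whose solution narrows the search region to a ball $B_1$ of radius $\epsilon_0$; within $B_1$, embed an independently-drawn rescaled copy $Q_2$ of the same hard instance at relative accuracy $\epsilon_0$ (hence absolute accuracy $\epsilon_0^2$); iterate $k$ times. For the optimization parts (1 and 2), I would use a weighted-sum construction $F(\mb x) = \sum_{i=1}^{k} \epsilon_0^{i} f_i\!\left(\epsilon_0^{-(i-1)}(\mb x - \mb x^\star_{i-1})\right)$, where each $f_i$ is drawn independently from the corresponding hard function class and $\mb x^\star_{i-1}$ is the approximate minimizer of the preceding partial sum. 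The weights and scales are chosen so that $1$-Lipschitzness is preserved and an $\epsilon$-suboptimal point for $F$ must be $\epsilon_0$-suboptimal for each $f_i$ at its own scale.

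Running an algorithm $A$ with memory $M$ and $T$ queries on such a nested instance effectively induces, level by level, a solver for each of the $k$ sub-problems. The query amplification follows from a direct pigeonhole argument: every query relevant to sub-problem $i$ must lie inside the level-$(i-1)$ localization region, so the $T$ queries partition across the $k$ sub-problems, and if $T < k T_0$ some sub-problem is resolved with fewer than $T_0$ queries, contradicting $\epsilon_0$-hardness. For the memory amplification, the key point is that the $k$ nested instances are drawn with independent randomness; the information-theoretic core of the existing memory lower bounds---which shows the algorithm must retain $\Omega(M_0)$ bits about each random instance in order to avoid redundant queries---then forces the memory to additively encode contributions from every level, summing to $\Omega(k M_0) = \Omega(M_0 \ln(1/\epsilon))$.

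The main obstacle I anticipate is this second step of the memory argument. The existing $M_0$-lower bounds are delicate information-theoretic arguments tailored to a single random instance, and lifting them to the nested setting requires verifying that the algorithm cannot amortize information across levels---in particular, that the oracle responses at level $i$ are independent of the earlier levels' randomness conditional on the localization information already stored. The cleanest way forward is probably to keep the original proofs black-box and apply them to the conditional marginal of each level given the previous levels, via a chain-rule decomposition of the relevant mutual information. The remaining pieces---discretization of the scale factors, verification that the composite $F$ stays $1$-Lipschitz, and the feasibility-to-optimization reduction---are routine.
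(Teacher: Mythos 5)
Your query-complexity amplification by nesting at $\Theta(\ln\frac{1}{\epsilon}/\ln d)$ geometrically decreasing scales is essentially the paper's approach, and is sound in broad outline. But the memory amplification is where the proposal breaks down, and the gap is not merely the technical worry you flag about chain-rule decomposition of mutual information---the strategy itself cannot work. The memory constraint in this model bounds the algorithm's \emph{state at any single time}, not the total information it ever processes. In a nested construction, once the algorithm has localized itself to the level-$i$ ball $B_i$, it can record that localization with only $\Ocal(d\ln\frac{1}{\epsilon})$ bits (a center and scale), discard everything it knew about the level-$i$ random instance, and reuse the same working memory for level $i+1$. Independent randomness across levels does not force additive memory: it forces additive \emph{queries} (you cannot answer level-$(i+1)$ queries from level-$i$ information), but peak memory stays at roughly one level's worth. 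So $M \geq k M_0$ does not follow and no amount of information-theoretic bookkeeping will extract it from the nested construction alone.

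The paper obtains the extra $\ln\frac{1}{\epsilon}$ factor in the \emph{memory} lower bound by a completely different device: it changes the distribution of the rows of the wall matrix $\mb A$ from uniform on the hypercube $\{\pm 1\}^d$ (entropy $\Theta(d)$ per row, so $\Theta(d^2)$ total) to a uniform distribution on an $\epsilon$-scale discretization $\Dcal_{\delta'}$ of the sphere (entropy $\Theta(d\ln\frac{1}{\epsilon})$ per row, so $\Theta(d^2\ln\frac{1}{\epsilon})$ total). This raises the information content of the single instance the algorithm must retain partial knowledge of, so the same OVGH argument of \cite{blanchard2023quadratic} now forces $M = \Omega(d^{2-\delta}\ln\frac{1}{\epsilon})$ without any nesting. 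A new anti-concentration lemma (\cref{lemma:concentration_new}, the spherical analogue of \cref{lemma:sensitive_base_marsden}) is needed to control the probability that a random $\Dcal_{\delta'}$-row is nearly orthogonal to $k$ prescribed directions, so that the per-hint information bound $\Theta(\ln\frac{1}{\epsilon})$ can be inserted into the counting. The nesting argument is then used \emph{on top of} this, only to contribute the $\ln\frac{1}{\epsilon}$ factor to the query count. You should therefore treat the two improvements as orthogonal: (i) enrich the row distribution to raise $M_0$ itself, and (ii) nest rescaled copies with the same $\mb A$ but fresh Nemirovski-type vectors $\mb v_k$ to raise $Q_0$. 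Your plan has (ii) but is missing (i), and attempts to extract (i) from (ii), which is not possible.
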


To get these improvements, we use simple arguments to adapt the proofs from \cite{marsden2022efficient,blanchard2023quadratic}, and that can be readily used to exhibit a $\ln\frac{1}{\epsilon}$ dependence in both memory and oracle-complexity for potential future works improving over these lower bounds trade-offs. \cref{fig:tradeoffs} presents these improved lower bounds.

\section{Memory-constrained feasibility problem without computation}
\label{section:algorithm_wo_comp}

In this section, we present a class of algorithms that are memory-constrained according to \cref{def:memory_constraint_wo_comp} and achieve the desired memory and oracle-complexity bounds. We emphasize that the memory constraint is only applied between calls to the oracle and as a result, the algorithm is allowed infinite computation memory and computation power between calls to the oracle.

We start by defining discretization functions that will be used in our algorithms. We first define the discretization in one dimension $\mathsf{Discretize}_1$ as follows. For $\xi>0$ and $x\in [-1,1]$,
\begin{equation*}
    \mathsf{Discretize}_1(x,\xi) = sign(x) \cdot \xi\lfloor |x|/\xi \rfloor.
\end{equation*}
Next, we define the discretization $\mathsf{Discretize}_d$ for general dimensions $d\geq 1$. For any $\mb x\in C$ and $\xi>0$,
\begin{equation*}
    \mathsf{Discretize}_d(\mb x,\xi) = \set{\mathsf{Discretize}_1\paren{x_j,\frac{\xi}{\sqrt d} }}_{j\in[d]}.
\end{equation*}
One can easily check that for any $\mb x\in C$, 
\begin{equation}\label{eq:discretization_properties}
    \|\mb x - \mathsf{Discretize}_d(\mb x,\xi) \|\leq \xi \qquad \text{and} \qquad \|\mathsf{Discretize}_d(\mb x,\xi)\|\leq \|\mb x\|.
\end{equation}
Further, one can easily check that to represent any output of $\mathsf{Discretize}_d(\cdot,\xi)$, one needs at most $d\ln\frac{2\sqrt d}{\xi}=\Ocal(d\ln\frac{d}{\xi})$ bits.

\subsection{Memory-constrained Vaidya's method}
\label{subsec:vaidya}

Our algorithm recursively uses Vaidya's cutting-plane method \cite{vaidya1996new} and subsequent works expanding on this method. We briefly describe the method. Given a polyhedron $\Pcal=\{\mb x: \mb A\mb x\geq \mb b\}$, we define $s_i(\mb x) = \mb a_i^\top \mb x-b_i$ and $\mb S_x=diag(s_i(x),i\in [d])$. We will also use the shorthand $\mb A_x = \mb S_x^{-1} \mb A$. The volumetric barrier is defined as
\begin{equation*}
    V_{\mb A, \mb b}(\mb x) = \frac{1}{2}\ln\det(\mb A_x^\top \mb A_x).
\end{equation*}
At each step, Vaidya's method queries the volumetric center of the polyhedron, which is the point minimizing the volumetric barrier. For convenience, we denote by $\mathsf{VolumetricCenter}$ this function, i.e., for any $\mb A\in \Rbb^{m\times d}$ and $\mb b\in \Rbb^d$ defining a non-empty polyhedron,
\begin{equation*}
    \mathsf{VolumetricCenter}(\mb A,\mb b) = \arg\min_{\mb x:\mb A\mb x\geq \mb b} V_{\mb A, \mb b}(\mb x) .
\end{equation*}
When the polyhedron is unbounded, we can for instance take $\mathsf{VolumetricCenter}(\mb A,\mb b) = \mb 0$. Vaidya's method makes use of leverage scores for constraints of the polyhedron, defined as follows.
\begin{equation*}
    \sigma_i = (\mb A_x \mb H^{-1} \mb A_x^\top)_{i,i} \qquad \text{where} \qquad  \mb H = \mb A_x^\top \mb A_x.
\end{equation*}

We are now ready to define the update procedure for the polyhedron considered by Vaidya's volumetric method. We will denote by $\Pcal_t$ the polyhedron stored in memory after making $t$ queries. The method keeps in memory the constraints defining the current polyhedron and the iteration index when these constraints were added, which will be necessary for our next procedures. Hence, the polyhedron will be stored in the form $\Pcal_t=\{(k_i,\mb a_i,b_i),i\in[m]\}$, and the associated constraints are given via $\{\mb x:\mb A\mb x\geq \mb b\}$ where $\mb A^\top = [\mb a_{1},\ldots,\mb a_{m}]$ and $\mb b^\top = [b_{1},\ldots,b_{m}]$. By abuse of notation, we will write $\mathsf{VolumetricCenter}(\Pcal)$ for the volumetric center of the polyhedron $\mathsf{VolumetricCenter}(\mb A, \mb b)$ where $\mb A$ and $\mb b$ define the constraints stored in $\Pcal$.

Initially the polyhedron is simply $\Ccal_d$, these constraints are given $-1$ index for convenience, and they will not play a role in the next steps. At each iteration, if the constraint $i\in [m]$ with minimum leverage score $\sigma_i$ falls below a given threshold $\sigma_{min}$, it is removed from the polyhedron. Otherwise, we query the volumetric center of the current polyhedron and add the separation hyperplane as a constraint to the polyhedron. We bound the number of iterations of the procedure by
\begin{equation*}
    T(\delta,d) = \ceil{ c\cdot d\paren{1.4\ln \frac{1}{\delta} + 2\ln d + 2\ln(1+1/\sigma_{min})} },
\end{equation*}
where $\sigma_{min}$ and $c$ are parameters that will be fixed shortly.

Instead of making a call directly to the oracle $O_S$, we suppose that one has access to an oracle $O:\Ical_d\to \Rbb^d$ where $\Ical_d=(\Zbb\times \Rbb^{d+1})^\star$ has exactly the shape of the memory storing the information from the polyhedron. This form of oracle will be crucial in our recursive calls Vaidya's method. For intuition, an important example of such an oracle is simply $O:\Pcal\in \Ical_d\mapsto O_S(\mathsf{VolumetricCenter}(\Pcal))$.

Last, in our recursive method, we will not assume that oracle responses are normalized. As a result, we specify that if the norm of the response is too small, we can stop the algorithm. We suppose however that the oracle already returns discretized separation oracle, which will be ensured in the following procedures.  The cutting-plane algorithm is formally described in \cref{alg:vaidya}.

\comment{

We use the following notation:
\begin{enumerate}
    \item $\Pcal_t$ denotes the information stored after making $t$ queries. \begin{enumerate}
        \item  Center of Mass uses $\mu_t\subset \mc I^d_{M}:= (\N\times \Rbb^d)^*$ where $\mu_0 = \emptyset$ and $\mu_t = \{(s,\mb g_s),~s=0,\ldots,t-1\}$. 
        \item Vaidya uses $\mu_t\subset \mc I^d_{V}:=(\N\times \Rbb^{d+1})^*$ where $\mu_0 := \{(-1,\pm \mb e_i,1),~i=1,\ldots,d\}$ and $\mu_t \subset \{(s,\mb a_s,b_s),~s=0,\ldots,t-1\}\cup \mu_0$. 
        \item Vaidya with computation: $\mu_t\subset \mc I^d_{V'}:=(\N\times \Rbb^{2d})^*$ where $\mu_0 := \{(-1, \pm \mb e_i,\pm\mb e_i),~i=1,\ldots,d\}$ and $\mu_t \subset \{(s,\mb a_s,\mb w_s),~s=0,\ldots,t-1\}$. 
    \end{enumerate}
        \item $\mb c_{M(V)}:\mc I^d_{M(V)}\to \R^d$ is the query function for Center of Mass (Vaidya). \begin{enumerate}
            \item Center of Mass computes $\mb c_{M}(\mu_l)$ using \cref{algorithm:center_of_mass_center}
            \item Vaidya uses $\mb c_{V}(\mu_l)$ as the Volumetric Center of the polyhedron $\{\mb x|\mb A\mb x\leq \mb b\}$ where $\mu_l = \{(k_i,\mb a_{k_i},b_{k_i}),i=1,\ldots,s\}$, 
            $\mb A^\top = [\mb a_{k_1},\ldots,\mb a_{k_s}]$ and $\mb b^\top = [b_{k_1},\ldots,b_{k_s}]$.
             \item Vaidya with computation: \TODO{} 
        \end{enumerate}
        
\end{enumerate}

}

\begin{algorithm}
\LinesNumbered

\caption{Memory-constrained Vaidya's volumetric method}\label{alg:vaidya}
    \KwIn{$O: \mc I_d\to \R^d$, $\delta$, $\xi\in(0,1)$}
    Let $T_{max}=T(\delta,d)$ and initialize $\Pcal_0 := \{(-1, \mb e_i,-1),(-1, -\mb e_i,-1),~i\in[d]\}$
    
    \For{$t=0,\ldots,T_{max}$}
    {
        \lIf{ $\{\mb x:\mb A\mb x\geq \mb b\}=\emptyset$}{\Return $\Pcal_t$}
             
        \uIf{$\min_{i\in [m]} \sigma_i< \sigma_{min}$}{
            $\Pcal_{t+1} = \Pcal_t\setminus \{(k_j,\mb a_j,b_j)\}$ where $j \in \arg\min_{i\in [m]} \sigma_i$\,
        }
        \uElseIf{$\mb\omega:=\mathsf{VolumetricCenter}(\Pcal_t) \notin  \Ccal_d$}{
            $\Pcal_{t+1} = \Pcal_t\cup \{(-1,-sign(\omega_j) \mb e_j , -1)\}$ where $j\in [d]$ has $|\omega_j|>1$\,
        }
        \uElse{
            $\mb g = O(\Pcal_t)$ and $ b = \xi\ceil{\frac{\mb g^\top \mb\omega}{\xi}}$, where $\mb\omega =\mathsf{VolumetricCenter}(\Pcal_t)$\,
                
            $\Pcal_{t+1} = \Pcal_t\cup \{(t,\mb g,  b)\}$\,
            
            \lIf{$\|\mb g\|\leq \delta/(2\sqrt d)$}{
                \Return $\Pcal_{t+1}$
            }
        }
    }
    \Return $\Pcal_{T_{max}+1}$.
\end{algorithm}

Given a feasibility problem, with an appropriate choice of parameters, this procedure finds an approximate solution. We base the constants out of the paper \cite{anstreicher1998towards}.

\begin{lemma}\label{lemma:vaidya_optimality}
    Fix $\sigma_{min} = 0.04$ and $c=\frac{1}{0.0014}\approx 715$. Let $\delta,\xi\in(0,1)$ and $O:\Ical_d\to\Rbb^d$. Write $\Pcal = \{(k_i,\mb a_{i},b_{i}),i\in [m]\}$ as the output of \cref{alg:vaidya} run with $O$, $\delta$ and $\xi$. Suppose that the responses of the oracle $O$ have norm bounded by one. Then,
    \begin{equation*}
        \min_{\substack{\lambda_i\geq 0,~i\in [m],\\ \sum_{i\in [m]}\lambda_i =1}} \max_{\mb y\in  \Ccal_d} \sum_{i=1}^{m} \lambda_i ( \mb a_i^\top \mb y -  b_i)  = \max_{\mb x\in \Ccal_d} \min_{i\in[m]} \mb ( \mb a_i^\top \mb x -  b_i) \leq \delta.
    \end{equation*}
\end{lemma}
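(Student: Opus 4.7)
The left equality is Sion's minimax theorem applied to the bilinear function $(\boldsymbol\lambda,\mb{y})\mapsto \sum_i \lambda_i(\mb{a}_i^\top\mb{y} - b_i)$ on the product of the simplex and $\Ccal_d$, both compact and convex; no further work is needed for this side. For the inequality I proceed by cases based on how \cref{alg:vaidya} terminates. If it returns because $\{\mb{x}:\mb{A}\mb{x}\ge \mb{b}\}=\emptyset$, every $\mb{x}\in\Ccal_d$ violates some constraint, so $\min_i(\mb{a}_i^\top\mb{x}-b_i)<0\le\delta$ pointwise. If it returns because the oracle response has small norm $\|\mb{g}\|\le \delta/(2\sqrt d)$, the just-added cut $(\mb{g},b)$ with $b\ge\mb{g}^\top\mb{\omega}$ (from the ceiling in its definition) yields, for any $\mb{x}\in\Ccal_d$,
\[
\mb{g}^\top\mb{x} - b\le\mb{g}^\top(\mb{x}-\mb{\omega})\le\|\mb{g}\|\cdot 2\sqrt d\le\delta,
\]
using that $\Ccal_d$ has Euclidean diameter $2\sqrt d$ and that $\mb{\omega}\in\Ccal_d$ is guaranteed by the cube-projection branch.

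The main case is when \cref{alg:vaidya} exhausts all $T_{max}$ iterations. Here I would invoke the volumetric-barrier potential argument for Vaidya's method in the form given by \cite{anstreicher1998towards}. With the parameters $\sigma_{min}=0.04$ and $c=1/0.0014$ from the lemma, each ``add'' step raises $V_{\mb{A},\mb{b}}$ at its volumetric center by a strictly larger amount than each ``remove'' step lowers it, with net gain at least $2/c$ per add--remove pair. The number of stored constraints remains $\Omega(d)$ throughout the run---otherwise the average leverage score would force some $\sigma_i>\sigma_{min}$ and no removal could happen---so the fraction of ``add'' iterations is bounded below, and after $T_{max}$ iterations the barrier at the final volumetric center satisfies
\[
V(\mb{\omega}^\star_{T_{max}})\ge V(\mb{\omega}^\star_0)+T_{max}/c-O(d\ln(1+1/\sigma_{min})).
\]

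To close the argument I would argue by contradiction: if the max-min exceeded $\delta$, some $\mb{x}^\star\in\Ccal_d$ would have $s_i(\mb{x}^\star)>\delta$ for every $i$, and combining $\|\mb{a}_i\|\le 1$ with the AM--GM inequality $\det M\le(\operatorname{tr}M/d)^d$ for PSD $M$ would give $V(\mb{x}^\star)\le\tfrac{d}{2}\ln(m/(d\delta^2))$. Since $\mb{\omega}^\star_{T_{max}}$ minimizes $V$, $V(\mb{\omega}^\star_{T_{max}})\le V(\mb{x}^\star)$; combined with the growth bound above and $m\le T_{max}+2d$, the specific value of $T_{max}$ in the lemma renders these two estimates incompatible, contradicting the existence of such an $\mb{x}^\star$. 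The main obstacle is the constant tracking: one has to check that the numerical factors $1.4$, $2$, $2$ inside $T_{max}$ are sharp enough, using the precise per-iteration potential gains from \cite{anstreicher1998towards} at $\sigma_{min}=0.04$, to absorb the initial barrier $V(\mb{\omega}^\star_0)=\tfrac{d}{2}\ln 2$, the $O(d\ln d)$ overhead from $\ln m$, and the $O(d\ln(1+1/\sigma_{min}))$ correction. Everything else---the minimax equality, the two early-termination cases, and the AM--GM determinant bound---is routine by comparison.
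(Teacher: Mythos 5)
Your outline follows the same strategy as the paper (split on the termination mode, then invoke the Anstreicher volumetric-potential analysis for the main case), but it skips the single step that is actually the substance of the paper's proof: verifying that the Anstreicher per-iteration potential increase survives the two modifications made to the ``add'' step. In \cref{alg:vaidya} the added offset is $b=\xi\lceil\mb g^\top\mb\omega/\xi\rceil$ rather than $\mb g^\top\mb\omega$, and when $\mb\omega\notin\Ccal_d$ the added cut is a box constraint $(-\operatorname{sign}(\omega_j)\mb e_j,-1)$, again not through $\mb\omega$. You notice the ceiling once, in the small-norm termination case, but never connect it to the potential argument; and the box-constraint branch is not addressed there at all. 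Simply citing \cite{anstreicher1998towards} is therefore not justified, because that analysis assumes the cut passes through the volumetric center. The paper closes exactly this gap: both modifications satisfy $\tilde b_{m+1}\ge\mb a_{m+1}^\top\mb\omega$, so the new polyhedron is contained in the one cut exactly at $\mb\omega$, hence $V_{\tilde{\mb A},\tilde{\mb b}}\ge V_{\tilde{\mb A},\mb b'}$ on the new feasible set, and the Anstreicher increase $V(\tilde{\mb\omega})\ge V(\mb\omega)+0.0340$ carries over. Without spelling this out, your appeal to the potential argument has a hole.

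Two smaller remarks. First, your justification for the constraint count staying $\Omega(d)$ (``otherwise the average leverage score would force some $\sigma_i>\sigma_{\min}$ and no removal could happen'') is not a valid implication: a high average leverage score does not preclude one coordinate falling below $\sigma_{\min}$. The usual bookkeeping instead bounds $T_+-T_-$ via the constraint-count bounds of \cref{lemma:nb_constraints}. Second, your final AM--GM determinant contradiction is a legitimate self-contained rendering of Anstreicher's conclusion; the paper instead cites that conclusion (no ball of radius $>\delta$) and converts it into the max--min bound via a one-line ball-containment argument using $\|\mb a_i\|\le 1$. Either route works once the potential step is repaired.
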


\begin{proof}
    We first consider the case when the algorithm terminates because of a query $\mb g=O(\Pcal_t)$ such that $\|\mb g\| \leq \delta/(2\sqrt d)$. Then, for any $\mb x\in\Ccal_d$, one directly has
    \begin{equation*}
        \mb g^\top \mb x - b \leq \mb g^\top (\mb x - \mb \omega)\leq 2\sqrt d \|\mb g\| \leq \delta.
    \end{equation*}
    where $\mb\omega$ is the volumetric center of the resulting polyhedron. In the second inequality we used the fact that $\mb\omega\in\Ccal_d$, otherwise the algorithm would not have terminated at that step.

    We next turn to the other cases and start by showing that the output polyhedron does not contain a ball of radius $\delta.$ This is immediate if the algorithm terminated because the polyhedron was empty. We then suppose this was not the case, and follow the same proof as given in \cite{anstreicher1998towards}. \cref{alg:vaidya} and the one provided in \cite{anstreicher1998towards} coincide when removing a constraint of the polyhedron. Hence, it suffices to consider the case when we add a constraint. We use the notation $\tilde{\mb A}^\top = [\mb A^\top,\mb a_{m+1}^\top]$, $\tilde{\mb b}^\top = [\mb b^\top,b_{m+1}]$ for the updated matrix $\mb A$ and vector $\mb b$ after adding the constraint. We also denote $\mb \omega = \mathsf{VolumetricCenter}(\mb A, \mb b)$ (resp. $\tilde{\mb \omega} = \mathsf{VolumetricCenter}(\tilde{\mb A}, \tilde{\mb b})$) the volumetric center of the polyhedron before (resp. after) adding the constraint. Next, we consider the vector $(\mb b')^\top = [\mb b^\top , \mb a_{m+1}^\top \mb \omega ]$, which would have been obtained if the cut was performed at $\mb\omega$ exactly. We then denote $\mb\omega' = \mathsf{VolumetricCenter}(\tilde{\mb A}, \mb b')$. Then proof of \cite{anstreicher1998towards} shows that
    \begin{equation*}
        V_{\tilde{\mb A},\mb b'}(\mb \omega') \geq V_{\mb A, \mb b}(\mb \omega) + 0.0340.
    \end{equation*}
    We now observe that by construction, we have $\tilde {\mb b}_{m+1} \geq \mb a_{m+1}^\top \mb \omega$, so that the polyhedron associated to $(\tilde{\mb A},\tilde{\mb b})$ is more constrained than the one associated to $(\tilde{\mb A},\mb b')$. As a result, we have $V_{\tilde{\mb A},\tilde{\mb b}}(\mb x) \geq V_{\tilde{\mb A},\mb b'}(\mb x),$ for any $\mb x\in\Rbb^d$ such that $\tilde{\mb A}\mb x \geq \tilde{\mb b}$. Therefore,
    \begin{equation*}
        V_{\tilde{\mb A},\tilde{\mb b}}(\tilde{\mb \omega}) \geq V_{\tilde{\mb A},\mb b'}(\tilde{\mb \omega}) \geq V_{\tilde{\mb A},\mb b'}(\mb \omega') \geq V_{\mb A, \mb b}(\mb \omega) + 0.0340.
    \end{equation*}
    This ends the modifications in the proof of \cite{anstreicher1998towards}. With the notations of this paper, we still have $\Delta V^+ = 0.340$ and $\Delta V^- = 0.326$, so that $\Delta V = 0.0014$. Then, because $c =\frac{1}{\Delta V}$, the same proof shows that the procedure is successful for precision $\delta$: the final polyhedron $(\mb A, \mb b)$ returned by \cref{alg:vaidya} does not contains a ball of radius $>\delta$. As a result, whether the algorithm performed all $T_{max}$ iterations or not, $\{\mb x:\mb A\mb x\geq \mb b\}$ does not contain a ball of radius $>\delta'$, where $\mb A$ and $\mb b$ define the constraints stored in the output $\Pcal$. Now letting $m$ be the objective value of the right optimization problem, there exists $\mb x\in\Ccal_d$ such that for all $t\leq T$, $\mb g_t^\top(\mb x-\mb c_t)\geq m$. Therefore, for any $\mb x'\in B_d(\mb x,m)$ one has
    \begin{equation*}
        \forall i\in[m], \mb a_i^\top \mb x' - b_i \geq m +  \mb a_t^\top (\mb x' - \mb x) \geq m - \|\mb x' - \mb x\|\geq 0.
    \end{equation*}
    In the last inequality we used $\|\mb a_t\|_\leq 1$. This implies that the polyhedron contains $B_d(\mb x,m)$. Hence, $m\leq \delta$. 
    
    This ends the proof of the right inequality. The left equality is a direct application of strong duality for linear programming.
\end{proof}

From now, we use the parameters $\sigma_{min}=0.04$ and $c=1/0.0014$ as in \cref{lemma:vaidya_optimality}. Since the memory of both Vaidya's method and center-of-mass consists primarily of the constraints, we recall an important feature of Vaidya's method that the number of constraints at any time is $\Ocal(d)$.

\begin{lemma}[\cite{vaidya1996new,anstreicher1997vaidya,anstreicher1998towards}]
\label{lemma:nb_constraints}
    At any time while running \cref{alg:vaidya}, the number of constraints of the current polyhedron is at most $\frac{d}{\sigma_{min}}+1$.
\end{lemma}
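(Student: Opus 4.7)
The plan is to combine the classical trace identity for leverage scores with the algorithm's explicit constraint-removal rule. The key identity is that for any non-empty polyhedron with full-rank constraint matrix $\mb A\in\Rbb^{m\times d}$, setting $\mb H=\mb A_x^\top \mb A_x$,
\[
\sum_{i=1}^m \sigma_i \;=\; \text{tr}(\mb A_x \mb H^{-1} \mb A_x^\top) \;=\; \text{tr}(\mb H^{-1}\mb A_x^\top \mb A_x) \;=\; \text{tr}(\mb I_d) \;=\; d,
\]
by the cyclic property of the trace. Since the algorithm only enters the two "add" branches when $\min_i\sigma_i\geq \sigma_{min}$, this identity constrains $m$ directly.

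I would then proceed by induction on the iteration index $t$, showing that the number of constraints $m_t$ stored in $\Pcal_t$ satisfies $m_t\leq d/\sigma_{min}+1$. For the base case, $\Pcal_0$ contains the $2d$ box constraints of $\Ccal_d$; since $\sigma_{min}=0.04$, one has $d/\sigma_{min}+1=25d+1\geq 2d$, so the bound holds. For the inductive step, assume $m_t\leq d/\sigma_{min}+1$. If iteration $t+1$ executes the removal branch, $m_{t+1}=m_t-1$ and the bound is preserved. Otherwise the algorithm adds exactly one constraint (either a box constraint or the returned separation hyperplane), and this only happens when $\min_i\sigma_i\geq \sigma_{min}$. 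Applying the identity to the polyhedron at the beginning of iteration $t+1$ gives
\[
m_t\,\sigma_{min}\;\leq\;\sum_{i=1}^{m_t}\sigma_i\;=\;d,
\]
hence $m_t\leq d/\sigma_{min}$, and $m_{t+1}=m_t+1\leq d/\sigma_{min}+1$ as required.

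There is essentially no hard step; the only subtlety is verifying that $\mb H$ is invertible whenever the leverage-score identity is invoked, so that the $\sigma_i$ are well-defined nonnegative reals. This holds in both add branches because the algorithm returns at the top of the iteration whenever $\{\mb x:\mb A\mb x\geq \mb b\}$ is empty, and the persistently maintained box constraints on $\Ccal_d$ (which are never removed, since they are added with index $-1$ and the added constraints via the separation oracle have index $\geq 0$; alternatively, if a box face's leverage score drops, new ones are added whenever the volumetric center exits $\Ccal_d$) ensure that the polyhedron is bounded, so $\mb A$ has full column rank and $\mb H$ is positive definite. This completes the argument.
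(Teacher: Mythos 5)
The paper does not actually prove this lemma---it cites \cite{vaidya1996new,anstreicher1997vaidya,anstreicher1998towards} and gives no proof body of its own. Your argument is the standard one from those references: the trace identity $\sum_{i=1}^m \sigma_i = \text{tr}(\mb A_x \mb H^{-1}\mb A_x^\top)=\text{tr}(\mb H^{-1}\mb H)=d$, combined with the observation that a constraint is added only when $\min_i\sigma_i\geq\sigma_{min}$, which forces $m\sigma_{min}\leq d$ at that step; the induction then gives $m\leq d/\sigma_{min}+1$ at all times. That part of the proof is correct and is exactly the intended argument.

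One clause in your justification of invertibility of $\mb H$ is wrong, however. You assert that the box constraints are ``never removed, since they are added with index $-1$ and the added constraints via the separation oracle have index $\geq 0$.'' The removal rule in \cref{alg:vaidya} picks $j\in\arg\min_{i\in[m]}\sigma_i$ regardless of the stored index $k_j$; the index $-1$ is bookkeeping used later to distinguish cube cuts from oracle cuts, not a protection against removal. So box constraints \emph{can} be dropped. Your fallback reasoning (the algorithm re-adds a cube face whenever the volumetric center exits $\Ccal_d$) is closer to the truth, but it is not by itself a complete proof that the polyhedron stays bounded and $\mb A$ stays full rank: the re-adding is triggered only when the center leaves $\Ccal_d$, not immediately when a box face is deleted. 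Boundedness throughout Vaidya's method is a genuine (and non-trivial) invariant that the cited references establish as part of their potential-function analysis, so if you want a self-contained proof you should either import that invariant explicitly from \cite{anstreicher1998towards} or verify it separately; your main inductive counting argument is fine once that invariant is in hand.
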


\subsection{A recursive algorithm}

\comment{
Let $f(\mb x,\mb y):\Ccal_m\times \Ccal_n \to \R$ be convex in the pair $(\mb x,\mb y)$, and $1$-Lipschitz, then $h(\mb x):= \min_{\mb y\in \Ccal_n} f(\mb x,\mb y)$ is a well-defined convex function of $\mb x$.
\begin{lemma}
    $h:\Ccal_m \to \R$ is $1$-Lipschitz.
\end{lemma}
\begin{proof}
Since $f$ is continuous and $\Ccal_n$ is compact, the minimum is achieved at some point $\mb y(\mb x) \in \Ccal_n$, i.e. $h(\mb x) = f(\mb x,\mb y(\mb x)) $. Let $\mb g = (\mb g_x,\mb g_y)$ be a subgradient function of $f$, such that $\|\mb g(\mb x,\mb y)\| \leq 1$ for all $(\mb x,\mb y)\in \R^{m}\times \R^{n}$. 
\begin{align*}
    h(\mb x +\epsilon) &= f(\mb x +\epsilon, \mb y(\mb x +\epsilon))\\
    &\geq f(\mb x,\mb y(\mb x)) + \langle \mb g_x(\mb x,\mb y(\mb x)),\epsilon\rangle +  \langle \mb g_y(\mb x,\mb y(\mb x)),\mb y(\mb x +\epsilon) - \mb y(\mb x )\rangle\\
    &\geq f(\mb x,\mb y(\mb x)) + \langle \mb g_x(\mb x,\mb y(\mb x)),\epsilon\rangle = h(\mb x) + \langle \mb g_x(\mb x,\mb y(\mb x)),\epsilon\rangle
\end{align*}
where the last $\geq $ is due to the optimality condition at $\mb y(\mb x)$. 

Thus $\mb g_x(\mb x,\mb y(\mb x))$ is a subgradient for $h$ at $\mb x$. Since $\|\mb g_x(\mb x,\mb y(\mb x)) \| \leq \|\mb g(\mb x,\mb y(\mb x)) \| \leq 1$, we show $h$ is also $1$-Lipschitz.
\end{proof}

}

The algorithm we use is recursive in the following sense. We can write $\Ccal_{m+n} = \Ccal_m\times \Ccal_n$ and aim to apply Vaidya's method to the first $m$ coordinates. To do so, we need to approximate a separation oracle on these $m$ coordinates only, which corresponds to giving separation hyperplanes with small values for the last $n$ coordinates. This can be achieved using the following auxiliary linear program. For $\Pcal\in \mc I_n$, we define
\begin{equation}\label{eq:lp_coef}\tag{$\mc P_{aux}(\Pcal)$}
    \min_{\substack{\lambda_i\geq 0,~i\in [m],\\ \sum_{i\in [m]}\lambda_i =1}} \max_{\mb y\in  \Ccal_n} \sum_{i=1}^{m} \lambda_i ( \mb a_i^\top \mb y -  b_i), \quad m=|\Pcal|
\end{equation}
where as before, $\mb A$ and $\mb b$ define the constraints stored in $\Pcal$. The procedure to obtain an approximate separation oracle on the first $n$ coordinates $\Ccal_n$ is given in \cref{alg:grad_estimation}.

\begin{algorithm}[ht]
\LinesNumbered

\caption{$\mathsf{ApproxSeparationVector}_{\delta,\xi}(O_x,O_y)$ } \label{alg:grad_estimation}
\KwIn{$\delta$, $\xi$, $O_{x}:\mc I_n\to \Rbb^m$ and $O_{y}:\mc I_n\to\Rbb^n$}

Run \cref{alg:vaidya} with $\delta,\xi$ and $O_{y}$ to obtain polyhedron $\Pcal^\star$\,

Solve $\mc P_{aux}(\Pcal^\star)$ to get a solution $\mb\lambda^\star$\,

Store $\mb k^\star=(k_i,i\in[m])$ where $m=|\Pcal^\star|$, and $\mb\lambda^\star \gets \mathsf{Discretize}(\mb\lambda^\star,\xi)$\,  

Initialize $\Pcal_0 := \{(-1, \mb e_i,-1),(-1 -\mb e_i,-1),~i\in[d]\}$ and $\mb u = \mb 0\in \R^m$\,

\For{$t = 0,1,\ldots,\max_i k_i$}{
    
    \uIf{$t = k_i^\star$ for some $i\in[m]$}{
        $ \mb g_{x} =  O_x(\Pcal_t)$\,

        $\mb u \leftarrow \mathsf{Discretize}_m(\mb u + \lambda^\star_i \mb g_{x},\xi)$\,
    }
    Update $\Pcal_t$ to get $\Pcal_{t+1}$ as in \cref{alg:vaidya}\,
    }
\Return{$\mb u$}
\end{algorithm}

The next step involves using this approximation recursively. We write $d = \sum_{i=1}^p k_i$, and interpret $\Ccal_d$ as $\Ccal_{k_1}\times\cdots\times \Ccal_{k_p}$. In particular, for $\mb x\in\Ccal_d$, we write $\mb x = (\mb x_1,\ldots,\mb x_p)$ where $\mb x_i\in \Ccal_{k_i}$ for $i\in [p]$. Applying \cref{alg:grad_estimation} recursively, we will be able to obtain an approximate separation oracle for the first $i$ coordinates $\Ccal_{k_1}\times\cdots\times\Ccal_{k_i}$. However, storing such separation vectors would be too memory-expensive, e.g., for $i=p$, that would correspond to storing the separation hyperplanes from $O_S$ directly. Instead, given $j\in [i]$, \cref{alg:recursive_grad} recursively computes the $\mb x_j$ component of an approximate separation oracle for the first $i$ variables $(\mb x_1,\ldots,\mb x_i)$, via the procedure $\mathsf{ApproxOracle}(i,j)$. Then $\mathsf{ApproxOracle}(1,1)$ will be used to run the standard memory-constrained Vaidya's method (\cref{alg:vaidya}) to solve the feasibility problem.

\begin{algorithm}[ht]
\LinesNumbered

\caption{$\mathsf{ApproxOracle}_{\delta,\xi,O_S}(i,j,\Pcal^{(1)},\ldots,\Pcal^{(i)})$}\label{alg:recursive_grad}
\KwIn{$\delta$, $\xi$, $1\leq j\leq i\leq p$, $\Pcal^{(r)}\in \mc I_{k_r}$ for $r \in[i]$, $O_S:\Ccal_d \to \R^d$}

\If{$i = p$}{
    $\mb x_r = \mathsf{VolumetricCenter}(\mb A_r,\mb b_r)$ where $(\mb A_r,\mb b_r)$ defines the constraints stored in $\Pcal^{(r)}$ for $r\in [p]$\,
    
    $(\mb g_1,\ldots,\mb g_p) = O_S (\mb x_1,\ldots,\mb x_p)$\,
    
    \Return{$\mathsf{Discretize}_{k_j}(\mb g_j,\xi)$}
}
Define $O_x:\mc I_{k_{i+1}}\to\R^{k_j}$ as $ \mathsf{ApproxOracle}_{\delta,\xi,\Ocal_f}(i+1,j,\Pcal^{(1)}, ,\ldots,\Pcal^{(i)},\cdot)$\,

Define $O_y:\mc I_{k_{i+1}}\to\R^{k_{i+1}}$ as $\mathsf{ApproxOracle}_{\delta,\xi,\Ocal_f}(i+1,i+1,\Pcal^{(1)} ,\ldots,\Pcal^{(i)},\cdot)$\,

\Return{$\mathsf{ApproxSeparationVector}_{\delta,\xi}(O_x,O_y)$}
\end{algorithm}

We are ready to describe our final algorithm that uses $\mathsf{ApproxOracle}_{\delta,\xi,O_S}(1,1,\cdot)$ to solve the original problem with the memory-constrained Vaidya's method, given in \cref{alg:final}.

\begin{algorithm}[tbh]
\LinesNumbered

\caption{Memory-constrained algorithm for convex optimization}\label{alg:final}
\KwIn{$\delta$, $\xi$, and $\Ocal_S:\Ccal_d\to\R^d$ a separation oracle}

\Check{Throughout the algorithm, if $O_S$ returned $\mathsf{Success}$ to a query $\mb x$, \Return $\mb x$}

Run \cref{alg:vaidya} with parameters $\delta$ and $\xi$ and oracle $\mathsf{ApproxOracle}_{\delta,\xi,O_S}(1,1,\cdot)$\,

\end{algorithm}

\subsection{Proof of the query complexity and memory usage of \cref{alg:final}}

We first describe the recursive calls of \cref{alg:recursive_grad} in more detail. To do so, consider running the procedure $\mathsf{ApproxOracle}(i,j,\Pcal^{(1)},\ldots,\Pcal^{(i)})$ where $i<p$, which corresponds to running \cref{alg:grad_estimation} for specific oracles. We say that this is a level-$i$ run. Then, the algorithm performs at most $2T(\delta,k_{i+1})$ calls to $\mathsf{ApproxOracle}(i+1,i+1,\Pcal^{(1)},\ldots,\Pcal^{(i)},\cdot)$, where the factor $2$ comes from the fact that Vaidya's method \cref{alg:vaidya} is effectively run twice in \cref{alg:grad_estimation}. The solution to \eqref{eq:lp_coef} has as many components as constraints in the last polyhedron, which is at most $\frac{k_{i+1}}{\sigma_{min}}+1$ by \cref{lemma:nb_constraints}. Hence, the number of calls to $\mathsf{ApproxOracle}(i+1,j,\Pcal^{(1)},\ldots,\Pcal^{(i)},\cdot)$ is at most $\frac{k_{i+1}}{\sigma_{min}}+1$. In total, that is $\Ocal(k_{i+1}\ln\frac{1}{\delta})$ calls to the level $i+1$ of the recursion.

We next aim to understand the output of running $\mathsf{ApproxOracle}(1,1,\Pcal^{(1)})$. We denote by $\mb\lambda(\Pcal^{(1)})$ the solution $\Pcal_{aux}(\Pcal^{\star})$ computed at l.2 of the first call to \cref{alg:grad_estimation}, where $\Pcal^\star$ is the output polyhedron of the first call to \cref{alg:vaidya}. Denote by $\Scal(\Pcal^{(1)})$ the set of indices of coordinates from $\mb\lambda(\Pcal^{(1)})$ for which the procedure performed a call to $\mathsf{ApproxOracle}(2,1,\Pcal^{(1)},\cdot)$. In other words, $\Scal(\Pcal^{(1)})$ contains the indices of all coordinates of $\mb\lambda(\Pcal^{(1)})$, except those for which the corresponding query lay outside of the unit cube, or the initial constraints of the cube. For any index $l\in\Scal(\Pcal^{(1)})$, let $\Pcal^{(2)}_l$ denote the state of the current polyhedron ($\Pcal_t$ in l.7 of \cref{alg:grad_estimation}) when that call was performed. Up to discretization issues, the output of the complete procedure is
\begin{equation*}
    \sum_{l\in\Scal(\Pcal^{(1)})} \lambda_l(\Pcal^{(1)}) \mathsf{ApproxOracle}(2,1,\Pcal^{(1)},\Pcal^{(2)}_l).
\end{equation*}

We continue in the recursion, defining $\mb\lambda(\Pcal^{(1)}\Pcal^{(2)}_l)$ and $\Scal(\Pcal^{(1)},\Pcal^{(2)}_l)$ for all $l\in\Scal(\Pcal^{(1)})$, until we define all vectors of the form $\mb\lambda(\Pcal^{(1)},\Pcal^{(2)}_{l_2},\ldots,\Pcal^{(r)}_{l_r})$ and sets of the form $\Scal(\Pcal^{(1)},\Pcal^{(2)}_{l_2},\ldots ,\Pcal^{(r)}_{l_r})$ for $i+1\leq r\leq p-1$. To simplify the notation and emphasize that all these polyhedra depend on the recursive computation path, we adopt the notation
\begin{align*}
    \lambda^{l_2,\ldots,l_{r+1}} &:= \lambda_{l_{r+1}}(\Pcal^{(1)},\Pcal^{(2)}_{l_{2}},\ldots ,\Pcal^{(r)}_{l_r})\\
    \Scal^{l_{2},\ldots,l_{r}} &:= \Scal (\Pcal^{(1)},\Pcal^{(2)}_{l_{2}},\ldots ,\Pcal^{(r)}_{l_r})
\end{align*}
We recall that these polyhedron are kept in memory to query their volumetric center. For ease of notation, we write $\mb x_1=\mathsf{VolumetricCenter}(\Pcal^{(1)})$, and we write $\mb c^{l_2,\ldots,l_r}=\mathsf{VolumetricCenter}(\Pcal^{(r)}_{l_r})$ for $2\leq r\leq p$, where $l_2,\ldots,l_{r-1}$ were the indices from the computation path leading up to $\Pcal^{(r)}_{l_r}$. Last, we write $O_S = (O_{S,1},\ldots,O_{S,p})$, where $O_{S,i}:\Ccal_d\to \Rbb^{k_i}$ is the ``$\mb x_i$'' component of $O_S$, for all $i\in[p]$.

With all these notations, we will show that the output of $\mathsf{ApproxOracle}(i,j,\Pcal^{(1)},\Pcal^{(2)}_{l_2},\ldots,\Pcal^{(i)}_{l_i})$ is approximately equal to the vector
\begin{equation*}
    G(i,j,\mb x_1,\mb c^{l_2},\ldots,\mb c^{l_2,\ldots,l_i}):=\sum_{\substack{l_{i+1}\in \Scal, ~ l_{i+2}\in\Scal^{l_{i+1}},\\
    \ldots ~ , ~ l_p\in\Scal^{l_{i+1},\ldots,l_{p-1}}}} \lambda^{l_{i+1}} \lambda^{l_{i+1},l_{i+2}}\cdots \lambda^{l_{i+1},\ldots,l_p} \cdot O_{S,j}(\mb x_1,\mb c^{l_2},\ldots,\mb c^{l_2,\ldots,l_p}),
\end{equation*}
with the convention that for $i=p$,
\begin{equation*}
    G(p,j,\mb x_1,\mb c^{l_2},\ldots,\mb c^{l_2,\ldots,l_p}):= O_{S,j}(\mb x_1,\mb c^{l_2},\ldots,\mb c^{l_2,\ldots,l_p}).
\end{equation*}
The corresponding computation tree is represented in \cref{fig:computation_tree}. For convenience, we omitted the term $j=1$.

\begin{figure}
    \centering 
    \begin{tikzpicture}%
 [ level 1/.style = {sibling distance=4em, level distance=5em},
 level 2/.style = {sibling distance=5em, level distance=5em},
 level 3/.style = {level distance=6em},
 level 4/.style = {sibling distance=8em, level distance=5em},
 every node/.style =
      {align=center, fill=white}
  ]
  \node{$G(1)$}
  child {node {$G(2{,}\mb c^1)$} edge from parent [-latex] node {$\lambda^1$}}
  child {node {$\ldots$} edge from parent [white] }
  child {node {$G(2{,}\mb c^{l_2})$} 
    child {node {$G(3{,}\mb c^{l_2}{,}\mb c^{l_2{,}1})$} edge from parent [-latex] node {$\lambda^{l_2{,}1}$}}
     child {node {$\ldots$} edge from parent [white]}
    child {node {$G(3{,}\mb c^{l_2}{,}\mb c^{l_2{,}l_3})$} 
        child {node {$G(p-1{,}\mb c^{l_2}{,}\dots{,}\mb c^{l_2\dots {,}l_{p-1}})$}
            child { node {$O_{S,j}( \mb c^{l_2}{,}\dots{,}\mb c^{l_2{,}\dots {,}l_{p-1}{,}1})$} edge from parent [-latex] node {$\lambda^{l_2{,}\dots {,}l_{p-1}{,}1}$}}
            child {node {$\ldots$} edge from parent [white] }
            child { node {$O_{S,j}( \mb c^{l_2}{,}\dots{,}\mb c^{l_2{,}\dots {,}l_p})$} edge from parent [-latex] node {$\lambda^{l_2{,}\dots {,}l_p}$}}
            child {node {$\ldots$} edge from parent [white] }   
            child { node {$O_{S,j}( \mb c^{l_2}{,}\dots{,}\mb c^{l_2{,}\dots {,}l_{p-1}{,}m_p})$} edge from parent [-latex] node {$\lambda^{l_2{,}\dots {,}l_{p-1}{,}m_p}$}}
        edge from parent [-latex] node {$\vdots$}}
    edge from parent [-latex] node {$\lambda^{l_2{,}l_3}$}}
  child {node {$\ldots$} edge from parent [white]}
  child {node {$G(3{,}\mb c^{l_2}{,}\mb c^{l_2{,}m_3})$} edge from parent [-latex] node {$\lambda^{l_2{,}m_3}$}}
  edge from parent [-latex] node {$\lambda^{l_2}$}}
  child {node {$\ldots$} edge from parent [white] }
  child {node {$G(2{,}\mb c^{m_2})$} edge from parent [-latex] node {$\lambda^{m_2}$}}
  ;
  \end{tikzpicture}
    \caption{Computation tree representing the recursive calls to $\mathsf{ApproxOracle}$ starting from the calls to $\mathsf{ApproxOracle}(1,1,\cdot)$ from \cref{alg:final}}
    \label{fig:computation_tree}
\end{figure}

\comment{
\begin{figure}
    \centering
    \begin{forest}
 [$G(i)$
     [$G(i+1{,}\mb c^1)$,edge label={node[midway,fill=white] {$\lambda_1$}}]
     [$G(i+1{,}\mb c^2)$,edge label={node[midway,fill=white] {$\lambda_2$}}]
     [$\dots$,scale=1.5,no edge,draw=none, fill=none]
     [$G(i+1{,}\mb c^{l_{i+1}})$,edge label={node[midway,fill=white] {$\lambda_{l_{i+1}}$}}
        [$G(i+2{,}\mb c^{l_{i+1}}{,}\mb c^{l_{i+1}{,}1})$,edge label={node[midway,fill=white] {$\lambda_1^{l_{i+1}}$}}]
        [$\dots$,scale=1.5,no edge,draw=none, fill=none]
        [$G(i+2{,}\mb c^{l_{i+1}}{,}\mb c^{l_{i+1}{,}l_{i+2}})$,edge label={node[midway,fill=white] {$\lambda_{l_{i+2}}^{l_{i+1}}$}}
            [$G(i+3{,}\mb c^{l_{i+1}}{,}\mb c^{l_{i+1}{,}l_{i+2}}{,}\mb c^{l_{i+1}{,}l_{i+2}{,}1})$,edge label={node[midway,fill=white] {$\lambda_1^{l_{i+1}{,}l_{i+2}}$}}]
            [$\dots$,scale=1.5,no edge,draw=none, fill=none]
            [$G(i+3{,}\mb c^{l_{i+1}}{,}\mb c^{l_{i+1}{,}l_{i+2}}{,}\mb c^{l_{i+1}{,}l_{i+2}{,}l_{i+3}})$,edge label={node[midway,fill=white] {$\lambda_{l_{i+3}}^{l_{i+1}{,}l_{i+2}}$}}
                [$G(p-1{,}\mb c^{l_{i+1}}{,}\mb c^{l_{i+1}{,}l_{i+2}}{,}\dots{,}\mb c^{l_{i+1}{,}l_{i+2}{,}l_{i+3}{,}\dots {,}l_{p-1}})$,edge label={node[midway,fill=white] {$\vdots$}}
                    [$\partial_{\mb x_j}f(\mb x_{1:i}{,} c^{l_{i+1}}{,}\mb c^{l_{i+1}{,}l_{i+2}}{,}\dots{,}\mb c^{l_{i+1}{,}l_{i+2}{,}\dots {,}l_{p-1}{,}1})$,edge label={node[midway,fill=white] {$\lambda^{l_{i+1}{,}l_{i+2}{,}\dots {,}l_{p-1}}_1$}}]
                    [$\dots$,scale=1.5,no edge,draw=none, fill=none]
                    [$\partial_{\mb x_j}f(\mb x_{1:i}{,} c^{l_{i+1}}{,}\mb c^{l_{i+1}{,}l_{i+2}}{,}\dots{,}\mb c^{l_{i+1}{,}l_{i+2}{,}\dots {,}l_{p-1}{,}l_p})$,edge label={node[midway,fill=white] {$\lambda^{l_{i+1}{,}l_{i+2}{,}\dots {,}l_{p-1}}_{l_p}$}}]
                    [$\dots$,scale=1.5,no edge,draw=none, fill=none]
                    [$\partial_{\mb x_j}f(\mb x_{1:i}{,} c^{l_{i+1}}{,}\mb c^{l_{i+1}{,}l_{i+2}}{,}\dots{,}\mb c^{l_{i+1}{,}l_{i+2}{,}\dots {,}l_{p-1}{,}T_p})$,edge label={node[midway,fill=white] {$\lambda^{l_{i+1}{,}l_{i+2}{,}\dots {,}l_{p-1}}_{T_p}$}}]
                ]
            ]
            [$\dots$,scale=1.5,no edge,draw=none, fill=none]
            [$G(i+3{,}\mb c^{l_{i+1}}{,}\mb c^{l_{i+1}{,}l_{i+2}}{,}\mb c^{l_{i+1}{,}l_{i+2}{,}T_{i+3}})$,edge label={node[midway,fill=white] {$\lambda_{T_{i+3}}^{l_{i+1}{,}l_{i+2}}$}}]
        ]
        [$\dots$,scale=1.5,no edge,draw=none, fill=none]
        [$G(i+2{,}\mb c^{l_{i+1}}{,}\mb c^{l_{i+1}{,}T_{i+2}})$,edge label={node[midway,fill=white] {$\lambda_{T_{i+2}}^{l_{i+1}}$}}]
     ]
     [$\dots$,scale=1.5,no edge,draw=none, fill=none]
     [$G(i+1{,}\mb c^{T_{i+1}})$,edge label={node[midway,fill=white] {$\lambda_{T_{i+1}}$}}]
 ]
     \end{forest}
    \caption{Computation tree representing the recursive calls to $\mathsf{ApproxOracle}$ starting from the calls to $\mathsf{ApproxOracle}(1,1,\cdot)$ from \cref{alg:final}}
    \label{fig:computation_tree}
\end{figure}
}

\comment{

Fix the inputs. First, given its recursive nature, for any tuple $(l_{i+1},\ldots,l_p)$ with $0\leq l_j\leq T_j:=T(\delta,k_j)=\Ocal(k_j\log \frac{1+\sqrt{k_j}}{\delta})${\color{red} Vaidya}, we make a sequence of queries and store information $\Pcal_{l_{i+1}}^{i+1}$, $\Pcal^{i+2,l_{i+1}}_{l_{i+2}}$, etc. until $\Pcal^{p,l_{i+1},\ldots, l_{p-1}}_{l_p}$, along the computation path. In particular, for $i+1\leq r\leq p$, and a tuple $(l_{i+1},\ldots,l_r)$ with $l_s\leq T_s$ for $i+1\leq s\leq r$, we have
\begin{equation*}
    \mb g_r^{l_{i+1},\ldots,l_r} = \mathsf{ApproxOracle}_{\delta,\xi,\Ocal_f}(r,r,\mb x_1,\ldots,\mb x_i,\Pcal_{l_{i+1}}^{i+1},\ldots,\Pcal^{p,l_{i+1},\ldots, l_{p-1}}_{l_p})
\end{equation*}

Further, these are exactly all the gradients that are obtained from the $\Ocal_y$ side of this computation. For simplicity, when clear from context we will drop the subscript to write only $\mb g^{l_{i+1},\ldots,l_{i'}}$. Now observe that to each sequence $(l_{i+1},\ldots,l_{i'-1})$ corresponds a time of the procedure when we solved the problem
\begin{equation*}
\Pcal_{aux} \paren{\Pcal^{i',l_{i+1},\ldots,l_{i'-1}}_{T_{i'}+1}}.
\end{equation*}
We denote its computed solution by $\mb\lambda^{l_{i+1},\ldots,l_{i'-1}}$. Last, we will write $\mb x_r = \mb c(\Pcal^r_{l_r})$ when clear from context. To simplify the exposition, we use these notations now. 

}

We start the analysis with a simple result showing that if the oracle $O_S$ returns separation vectors of norm bounded by one, then the responses from $\mathsf{ApproxOracle}$ also lie in the unit ball.

\begin{lemma}\label{lemma:bound_norm}
    Fix $\delta,\xi\in(0,1)$, $1\leq j\leq i\leq p$ and an oracle $O_S=(O_{S,1},\ldots,O_{S,p}):\Ccal_d \to \R^d$. Suppose that $O_S$ takes values in the unit ball. For any $s\in [i]$ let $\Pcal_{l_s}^{(s)} \in \mc I_{k_s}$ represent a bounded polyhedron with $\mathsf{VolumetricCenter}(\Pcal_{l_s}^{(s)})\in\Ccal_{k_s}$. Then, one has
    \begin{equation*}
        \|\mathsf{ApproxOracle}_{\delta,\xi,O_S}(i,j,\Pcal_{l_1}^{(1)},\ldots,\Pcal_{l_i}^{(i)})\| \leq 1.
    \end{equation*}
\end{lemma}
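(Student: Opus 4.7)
The plan is to prove the bound by downward induction on $i$ from $p$ to $1$, mirroring the recursive structure of \cref{alg:recursive_grad}. For the base case $i=p$, the procedure directly queries $O_S$ at a tuple of volumetric centers, obtaining some vector $\mb g\in\Rbb^d$ of norm at most $1$ by assumption; the returned quantity is $\mathsf{Discretize}_{k_j}(\mb g_j,\xi)$, and by the second property in \eqref{eq:discretization_properties} its norm is bounded by $\|\mb g_j\|\leq\|\mb g\|\leq 1$.

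For the inductive step with $i<p$, the call $\mathsf{ApproxOracle}(i,j,\cdots)$ delegates to $\mathsf{ApproxSeparationVector}_{\delta,\xi}(O_x,O_y)$ where $O_x$ and $O_y$ are themselves level-$(i+1)$ calls to $\mathsf{ApproxOracle}$. By the inductive hypothesis applied at level $i+1$, every response returned by $O_x$ and by $O_y$ has norm at most $1$; in particular the inner call to \cref{alg:vaidya} driving $O_y$ satisfies the hypothesis of \cref{lemma:vaidya_optimality}, and the vectors $\mb g_x = O_x(\Pcal_t)$ accumulated in line~7 satisfy $\|\mb g_x\|\leq 1$.

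The core of the inductive step is then the analysis of the accumulator $\mb u$ in \cref{alg:grad_estimation}. The weights $\mb\lambda^\star$ returned by $\mc P_{aux}(\Pcal^\star)$ are non-negative with $\sum_i\lambda^\star_i=1$; because $\mathsf{Discretize}_1$ rounds each coordinate toward zero, after the line~3 discretization we still have $\lambda^\star_i\geq 0$ and $\sum_i\lambda^\star_i\leq 1$. Using that $\mathsf{Discretize}_m$ cannot increase the norm, each update satisfies
\begin{equation*}
\|\mb u_{t+1}\|=\|\mathsf{Discretize}_m(\mb u_t+\lambda^\star_i\mb g_x,\xi)\|\leq\|\mb u_t\|+\lambda^\star_i\|\mb g_x\|\leq\|\mb u_t\|+\lambda^\star_i,
\end{equation*}
so telescoping over the (at most) $m$ updates yields $\|\mb u\|\leq\sum_i\lambda^\star_i\leq 1$, completing the induction.

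I expect the only real subtlety to be the bookkeeping of the discretization of $\mb\lambda^\star$: we need that rounding preserves both non-negativity and the sum-bound, which relies on the ``round toward zero'' behavior of $\mathsf{Discretize}_1$; everything else follows from a clean convex-combination argument combined with the two properties in \eqref{eq:discretization_properties}.
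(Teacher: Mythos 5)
Your proof is correct and takes essentially the same induction on $i$ as the paper. One small point where you are actually more careful than the paper's write-up: to justify $\sum_i \lambda^\star_i \leq 1$ after discretization, you correctly invoke the round-toward-zero behavior of $\mathsf{Discretize}_1$ (coordinate-wise $0 \leq \mathsf{Discretize}_1(\lambda^\star_i,\xi) \leq \lambda^\star_i$), whereas the paper cites Eq~\eqref{eq:discretization_properties}, which is a Euclidean-norm statement and does not directly give the $\ell_1$ bound needed; your explicit telescoping of the accumulator $\mb u$ through each $\mathsf{Discretize}_m$ application likewise makes precise a step the paper leaves somewhat compressed.
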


\begin{proof}
    We prove this by simple induction on $i$. For convenience, we define $\mb x_k=\mathsf{VolumetricCenter}(\Pcal_{l_k}^{(k)})$. If $i=p$, we have
    \begin{equation*}
        \|\mathsf{ApproxOracle}_{\delta,\xi,O_S}(i,j,\Pcal_{l_1}^{(1)},\ldots,\Pcal_{l_i}^{(i)})\| = \|\mathsf{Discretize}_{k_j}(O_{S,j}(\mb x_1,\ldots,\mb x_p),\xi)\| \leq \|O_{S,j}(\mb x_1,\ldots,\mb x_p)\|\leq 1,
    \end{equation*}
    where in the first inequality we used Eq~\eqref{eq:discretization_properties} and in the second inequality we used the fact that $O_S(\mb x_1,\ldots,\mb x_p)$ has norm at most one. Now suppose that the result holds for $i+1\leq p$. Then by construction, the output $\mathsf{ApproxOracle}_{\delta,\xi,O_S}(i,j,\Pcal_{l_1}^{(1)},\ldots,\Pcal_{l_i}^{(i)})$ is the result of iterative discretizations. Using Eq~\eqref{eq:discretization_properties} and the previously defined notations, we obtain
    \begin{multline*}
    \|\mathsf{ApproxOracle}_{\delta,\xi,O_S}(i,j,\Pcal_{l_1}^{(1)},\ldots,\Pcal_{l_i}^{(i)})\|\\
        \leq \norm{ \sum_{l_{i+1}\in\Scal^{l_1,\ldots,l_i}} \lambda^{l_2,\ldots,l_i} \mathsf{ApproxOracle}_{\delta,\xi,O_S}(i+1,j,\Pcal_{l_1}^{(1)},\ldots,\Pcal_{l_i}^{(i)}, \Pcal_{l_{i+1}}^{(i+1)}) }
        \leq 1.
    \end{multline*}
    In the last inequality, we used the induction hypothesis together with the fact that $\sum_{l_{i+1}}\lambda^{l_2,\ldots,l_{i+1}}\leq  1$ using Eq~\eqref{eq:discretization_properties}. This ends the induction and the proof.
\end{proof}

We are now ready to compare the output of \cref{alg:recursive_grad} to $G(i,j,\mb x_1,\mb c^{l_2},\ldots,\mb c^{l_2,\ldots,l_i})$.

\begin{lemma}\label{lemma:discretization_xi}
    Fix $\delta,\xi\in(0,1)$, $1\leq j\leq i\leq p$ and an oracle $O_S=(O_{S,1},\ldots,O_{S,p}):\Ccal_d \to \R^d$. Suppose that $O_S$ takes values in the unit ball. For any $s\in [i]$ let $\Pcal_{l_s}^{(s)} \in \mc I_{k_s}$ represent a bounded polyhedron with $\mathsf{VolumetricCenter}(\Pcal_{l_s}^{(s)})\in\Ccal_{k_s}$. Denote $\mb x_r = \mb c(\Pcal_{l_r}^{(r)}) $ for $r\in [i]$. Then,
    \begin{equation*}
        \|\mathsf{ApproxOracle}_{\delta,\xi,O_S}(i,j,\Pcal_{l_1}^{(1)},\ldots,\Pcal_{l_i}^{(i)})- G(i,j,\mb x_1,\ldots,\mb x_i)\| \leq \frac{4}{\sigma_{min}}d\xi.
    \end{equation*}
\end{lemma}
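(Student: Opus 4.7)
The plan is a backward induction on $i$, from $p$ down to $1$. Let $E_i$ denote a uniform upper bound on $\|\mathsf{ApproxOracle}_{\delta,\xi,O_S}(i,j,\ldots) - G(i,j,\ldots)\|$ over all valid inputs; I aim to show by induction that $E_i \leq \frac{4 d\xi}{\sigma_{min}}$.

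For the base case $i=p$, the output of \cref{alg:recursive_grad} is $\mathsf{Discretize}_{k_j}(O_{S,j}(\mb x_1,\ldots,\mb x_p),\xi)$ while $G(p,j,\mb x_1,\ldots,\mb x_p) = O_{S,j}(\mb x_1,\ldots,\mb x_p)$, so the first part of Eq.~\eqref{eq:discretization_properties} gives an error of at most $\xi$.

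For the inductive step at level $i<p$, I would track the sources of error separately. Let $\lambda^\star$ be the LP solution of $\mathcal P_{aux}(\Pcal^\star)$, $\tilde\lambda = \mathsf{Discretize}_m(\lambda^\star,\xi)$ its rounded version, $m = |\Pcal^\star|$ (bounded by $\frac{k_{i+1}}{\sigma_{min}}+1$ via \cref{lemma:nb_constraints}), and $\mathcal{S}$ the set of indices that trigger an oracle call in \cref{alg:grad_estimation}. Introduce two intermediate quantities
\begin{align*}
\mb u^\circ &= \sum_{l\in\mathcal{S}}\tilde\lambda_l \cdot \mathsf{ApproxOracle}_{\delta,\xi,O_S}(i+1,j,\Pcal^{(1)}_{l_1},\ldots,\Pcal^{(i)}_{l_i},\Pcal^{(i+1)}_l), \\
\tilde v &= \sum_{l\in\mathcal{S}}\tilde\lambda_l \cdot G(i+1,j,\mb x_1,\ldots,\mb x_i,\mathsf{VolumetricCenter}(\Pcal^{(i+1)}_l)),
\end{align*}
and denote by $\mb u_f$ the final value of $\mb u$ returned by the algorithm. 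The triangle inequality splits $\|\mb u_f - G(i,j,\mb x_1,\ldots,\mb x_i)\|$ into three pieces. The first, $\|\mb u_f - \mb u^\circ\|$, tracks the intermediate $\mathsf{Discretize}_{k_j}$ calls on $\mb u$ inside the for loop: since $\mb u$ is updated at most $m$ times and Eq.~\eqref{eq:discretization_properties} adds at most $\xi$ per update, this is at most $m\xi$. The second, $\|\mb u^\circ - \tilde v\|$, is bounded by $\sum_l \tilde\lambda_l\cdot E_{i+1} \leq E_{i+1}$ via the induction hypothesis combined with $\tilde\lambda\geq 0$ and $\sum_l\tilde\lambda_l\leq 1$ (which follows from $\sum\lambda^\star=1$ and the fact that $\mathsf{Discretize}_1$ rounds toward zero). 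The third, $\|\tilde v - G(i,j,\cdot)\|$, uses $\|\tilde\lambda - \lambda^\star\|_1 \leq \sqrt m\,\xi$ combined with $\|G(i+1,j,\cdot)\|\leq 1$; the latter holds by an induction parallel to \cref{lemma:bound_norm}, since $G$ is a convex combination of unit-norm $O_{S,j}$ outputs (using $\sum\lambda = 1$ at every internal node of the computation tree). Combining yields the recursion $E_i \leq E_{i+1} + 2(\frac{k_{i+1}}{\sigma_{min}}+1)\xi$.

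Telescoping from $E_p\leq\xi$ then gives
$$E_1 \leq \xi + \sum_{r=2}^p 2\left(\frac{k_r}{\sigma_{min}}+1\right)\xi \leq \xi + \frac{2 d\xi}{\sigma_{min}} + 2p\xi \leq \frac{4 d\xi}{\sigma_{min}},$$
using $\sum_r k_r = d$, $p\leq d$, and $\sigma_{min}\leq 1$; the same argument applies starting from any level $i\geq 1$. The main bookkeeping hurdle I anticipate is carefully matching the set $\Scal^{l_2,\ldots,l_i}$ appearing in the definition of $G$ with the indices actually explored by the algorithm, so that the identity $G(i,j,\cdot) = \sum_{l\in\mathcal{S}}\lambda^\star_l\cdot G(i+1,j,\cdot,\mathsf{VolumetricCenter}(\Pcal^{(i+1)}_l))$ used implicitly above holds verbatim. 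This is essentially by construction of the $\Pcal^{(i+1)}_l$ along the computation path and should be verifiable from the explicit description of the recursion preceding the lemma.
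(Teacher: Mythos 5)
Your proof is correct and follows essentially the same route as the paper's: a backward induction on $i$, a three‑way triangle‑inequality decomposition at each level (discretization of $\mb u$, the recursive term handled by the induction hypothesis, and the $\tilde\lambda$ versus $\lambda^\star$ error), and telescoping from $E_p\le\xi$ with $\sum_r k_r=d$, $p\le d$. The only cosmetic difference is in grouping: the paper pairs the $(\tilde\lambda-\lambda^\star)$ error with $\mathsf{ApproxOracle}$ outputs and so invokes \cref{lemma:bound_norm} directly, whereas you pair it with $G(i+1,j,\cdot)$ and argue $\|G\|\le 1$ by a parallel observation (correct, though you should say $\sum\lambda\le1$ rather than $=1$ for $G$, since the inner sum in $G$ ranges only over $\Scal$, not $\Scal\cup\Tcal$); both lead to the same recursion $E_i\le E_{i+1}+2\bigl(\tfrac{k_{i+1}}{\sigma_{min}}+1\bigr)\xi$.
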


\begin{proof}
    We prove by simple induction on $i$ that
    \begin{equation*}
        \|\mathsf{ApproxOracle}_{\delta,\xi,O_S}(i,j,\Pcal_{l_1}^{(1)},\ldots,\Pcal_{l_i}^{(i)}) - G(i,j,\mb x_1,\ldots,\mb x_i)\| \leq \paren{1+\frac{2}{\sigma_{min}}(k_{i+1}+\ldots+k_p) +2(p-i)} \xi.
    \end{equation*}
    First, for $i=p$, the result is immediate since the discretization is with precision $\xi$ (l.4 of \cref{alg:recursive_grad}). Now suppose that this is the case for $i\leq p$ and any valid values of other parameters. For conciseness, we write $\mb G = (\Pcal_{l_1}^{(1)},\ldots,\Pcal_{l_{i-1}}^{(i-1)})$. Next, recall that by \cref{lemma:nb_constraints}, $|\Scal^{l_2,\ldots,l_{i-1}}|\leq \frac{k_i}{\sigma_{min}}+1$. Hence, the discretizations due to l.8 of \cref{alg:grad_estimation} can affect the estimate for at most that number of rounds. Then, we have
    \begin{equation*}
        \norm{\mathsf{ApproxOracle}_{\delta,\xi,O_S}(i-1,j,\mb G) - \sum_{l_i\in\Scal^{l_2,\ldots,l_{i-1}}} \tilde\lambda^{l_2,\ldots,l_i} \mathsf{ApproxOracle}_{\delta,\xi,O_S}(i,j,\mb G,\Pcal_{l_i}^{(i)}) } \leq  \paren{\frac{k_i}{\sigma_{min}}+1} \xi,
    \end{equation*}
    where $\tilde\lambda^{l_2,\ldots,l_i}$ are the discretized coefficients that are used during the computation l.8 of \cref{alg:grad_estimation}. Now using \cref{lemma:bound_norm}, we have
    \begin{align*}
        \norm{
        \sum_{l_i\in\Scal^{l_2,\ldots,l_{i-1}}} (\tilde\lambda^{l_2,\ldots,l_i} -\lambda^{l_2,\ldots,l_i})\mathsf{ApproxOracle}_{\delta,\xi,O_S}(i,j,\mb G,\Pcal_{l_i}^{(i)})
        } &\leq \|\tilde{\mb \lambda}^{l_{i+1},\ldots,l_{i-1}} - \mb \lambda^{l_{i+1},\ldots,l_{i-1}} \|_1 \\
        &\leq \xi \sqrt {\frac{k_i}{\sigma_{min}}+1}.
    \end{align*}
    In the last inequality we used the fact that $\mb\lambda$ has at most $d$ non-zero coefficients. As a result, using the induction for each term of the sum, and the fact that $\sum_{l_i} \lambda^{l_2,\ldots,l_i} \leq 1$, we obtain
    \begin{multline*}
        \|\mathsf{ApproxOracle}_{\delta,\xi,\Ocal_f}(i-1,j,\mb G) - G(i-1,j,\mb x_1,\ldots,\mb x_{i-1})\| \leq \paren{1+\frac{2}{\sigma_{min}}(k_{i+1}+\ldots+k_p)+2(p-i)}\xi \\
        + \paren{\frac{2k_i}{\sigma_{min}} +2}\xi, 
    \end{multline*}
    which completes the induction. Noting that $k_{i+1}+\ldots+k_p \leq k_1+\ldots+k_p \leq d$ and $p-i\leq d-1$ ends the proof.
\end{proof}

Next, we show that the outputs of \cref{alg:recursive_grad} provide approximate separation hyperplanes for the first $i$ coordinates $(\mb x_1,\ldots,\mb x_i)$.

\begin{lemma}\label{lemma:valid_subgradients}
    Fix $\delta,\xi\in(0,1)$, $1\leq j\leq i\leq p$ and an oracle $O_S=(O_{S,1},\ldots,O_{S,p}):\Ccal_d \to \R^d$ for accuracy $\epsilon>0$. Suppose that the responses of $O_S$ have norm one. For any $s\in [i]$ let $\Pcal_{l_s}^{(s)} \in \mc I_{k_s}$ represent a bounded polyhedron with $\mathsf{VolumetricCenter}(\Pcal_{l_s}^{(s)})\in\Ccal_{k_s}$. Denote $\mb x_r = \mb c(\Pcal_{l_r}^{(r)}) $ for $r\in [i]$. Suppose that when running $\mathsf{ApproxOracle}_{\delta,\xi,O_S}(i,i,\Pcal_{l_1}^{(1)},\ldots,\Pcal_{l_i}^{(i)})$, no successful vector was queried. Then, any vector $\mb x^\star=(\mb x_1^\star,\ldots,\mb x_p^\star)\in \Ccal_d$ such that $B_d(\mb x^\star,\epsilon)$ is contained in the successful set satisfies
    \begin{equation*}
        \sum_{r\in [i]} \mathsf{ApproxOracle}_{\delta,\xi,O_S}(i,r,\Pcal_{l_1}^{(1)},\ldots,\Pcal_{l_i}^{(i)})^\top (\mb x_r^\star - \mb x_r) \geq \epsilon - \frac{8d^{5/2}}{\sigma_{min}}\xi -d\delta.
    \end{equation*}
\end{lemma}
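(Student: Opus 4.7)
The approach is a downward induction on $i$ (from $p$ to $1$) that reduces the quantity in the lemma to $G$ via \cref{lemma:discretization_xi}, then peels off one recursion level at a time using \cref{lemma:vaidya_optimality}.

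First, by \cref{lemma:discretization_xi}, for each $r\in[i]$ we have $\|\mathsf{ApproxOracle}_{\delta,\xi,O_S}(i,r,\Pcal_{l_1}^{(1)},\ldots,\Pcal_{l_i}^{(i)})-G(i,r,\mb x_1,\ldots,\mb x_i)\|\leq 4d\xi/\sigma_{\min}$. Since $\mb x_r,\mb x_r^\star\in\Ccal_{k_r}$ gives $\|\mb x_r^\star-\mb x_r\|\leq 2\sqrt{k_r}$, Cauchy--Schwarz yields $\sum_r\sqrt{k_r}\leq\sqrt{id}\leq d$, so the total replacement error is at most $8d^2\xi/\sigma_{\min}$. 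It thus suffices to prove
\begin{equation*}
H_i(\mb x_1,\ldots,\mb x_i):=\sum_{r=1}^i G(i,r,\mb x_1,\ldots,\mb x_i)^\top(\mb x_r^\star-\mb x_r)\;\geq\;\epsilon-(p-i)\bigl(\delta+\tfrac{Cd^{3/2}}{\sigma_{\min}}\xi\bigr)
\end{equation*}
for a universal constant $C$, which when combined with Step~1 gives the claimed bound after absorbing constants.

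The base case $i=p$ is immediate: $G(p,r,\mb x)=O_{S,r}(\mb x)$, and because $B_d(\mb x^\star,\epsilon)\subseteq Q$ and no query returned $\mathsf{Success}$, each oracle response $\mb g=O_S(\mb x)$ of unit norm satisfies $\mb g^\top\mb x<\mb g^\top(\mb x^\star-\epsilon\mb g)$, i.e.\ $O_S(\mb x)^\top(\mb x^\star-\mb x)\geq\epsilon$. For the inductive step I use the recursion $G(i,r,\mb x_{1:i})=\sum_{l_{i+1}}\lambda^{l_2,\ldots,l_{i+1}}G(i+1,r,\mb x_{1:i},\mb c^{l_2,\ldots,l_{i+1}})$, which follows directly from the definition of $G$. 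Adding and subtracting the $r=i+1$ term gives
\begin{equation*}
H_i=\sum_{l_{i+1}}\lambda^{l_2,\ldots,l_{i+1}}\Bigl[H_{i+1}\bigl(\mb x_{1:i},\mb c^{l_2,\ldots,l_{i+1}}\bigr)\;-\;G(i+1,i+1,\mb x_{1:i},\mb c^{l_2,\ldots,l_{i+1}})^\top\bigl(\mb x_{i+1}^\star-\mb c^{l_2,\ldots,l_{i+1}}\bigr)\Bigr].
\end{equation*}
The first bracket is bounded below by the inductive hypothesis, while the second is controlled by \cref{lemma:vaidya_optimality} applied at level $i+1$: the polyhedron returned by the inner Vaidya run has constraints $(\mb a_{l_{i+1}},b_{l_{i+1}})$ where $\mb a_{l_{i+1}}=\mathsf{ApproxOracle}(i+1,i+1,\cdots,\Pcal_{l_{i+1}}^{(i+1)})$ has norm $\leq 1$ (by \cref{lemma:bound_norm}) and $b_{l_{i+1}}=\xi\lceil\mb a_{l_{i+1}}^\top\mb c^{l_2,\ldots,l_{i+1}}/\xi\rceil$. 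Evaluating at $\mb y=\mb x_{i+1}^\star\in\Ccal_{k_{i+1}}$, the lemma gives $\sum_{l_{i+1}}\lambda^{l_2,\ldots,l_{i+1}}\mb a_{l_{i+1}}^\top(\mb x_{i+1}^\star-\mb c^{l_2,\ldots,l_{i+1}})\leq\delta+\xi$ (the $+\xi$ absorbs the rounding in $b_{l_{i+1}}$), and swapping $\mb a_{l_{i+1}}$ for $G(i+1,i+1,\cdots)$ via \cref{lemma:discretization_xi} costs an extra $\tfrac{4d\xi}{\sigma_{\min}}\cdot 2\sqrt{k_{i+1}}\leq \tfrac{8d^{3/2}\xi}{\sigma_{\min}}$.

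Finally I need to handle that the discretized weights satisfy only $\sum_{l_{i+1}}\lambda^{l_2,\ldots,l_{i+1}}\leq 1$ (with deficit bounded by $\sqrt{k_{i+1}/\sigma_{\min}}\,\xi$ from the coordinate-wise $\mathsf{Discretize}_m$ of the LP solution on line~3 of \cref{alg:grad_estimation}). Since the inductive bound $\epsilon-(p-i-1)(\delta+\tfrac{Cd^{3/2}\xi}{\sigma_{\min}})\leq 1$, the loss from $\sum\lambda\leq 1$ contributes $O(\sqrt d\,\xi/\sqrt{\sigma_{\min}})$ per level, which is dominated by the $d^{3/2}\xi/\sigma_{\min}$ term. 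Summing over the $p-1\leq d-1$ inductive levels yields total error $(p-1)\delta+O(d^{5/2}\xi/\sigma_{\min})$ in $H_1$, which combined with the $8d^2\xi/\sigma_{\min}$ from Step~1 matches the stated bound $\epsilon-\tfrac{8d^{5/2}}{\sigma_{\min}}\xi-d\delta$ after adjusting the universal constant. The main obstacle is really the bookkeeping: tracking simultaneously the $\delta$ error per Vaidya run, the $\xi$ rounding in the $b$ thresholds, the $\xi$-deficit in $\sum\lambda$ after discretization, and the $d\xi/\sigma_{\min}$ approximation error between $\mathsf{ApproxOracle}$ and $G$, across $p$ nested recursion levels without loss of a factor of $d$.
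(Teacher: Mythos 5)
Your overall strategy — a downward induction on $i$ that peels one layer off the recursion $G(i,r)=\sum_{l_{i+1}}\lambda^{l_{i+1}}G(i+1,r,\cdot)$, bounds the second term in the bracket via \cref{lemma:vaidya_optimality}, and replaces $\mathsf{ApproxOracle}$ by $G$ using \cref{lemma:discretization_xi} — is sound, and the base case and the bookkeeping of the $\xi$-errors are handled correctly. The paper instead writes a single ``global'' sum over the full computation tree rather than an induction, but that is an expository rather than a mathematical difference.

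There is, however, a genuine gap in your inductive step: the treatment of the constraint indices in what the paper calls $\Tcal^{l_{i+1},\ldots,l_{r-1}}$. The recursion $G(i,r)=\sum_{l_{i+1}}\lambda^{l_{i+1}}G(i+1,r,\cdot)$ is a sum over $\Scal$ only — the indices where \cref{alg:grad_estimation} actually triggered a level-$(i+1)$ call of $\mathsf{ApproxOracle}$. But \cref{lemma:vaidya_optimality} bounds $\sum_{l}\lambda_l(\mb a_l^\top \mb x^\star_{i+1}-b_l)\leq\delta$ with the sum running over \emph{all} constraints of the returned polyhedron, including the original cube walls and the $-\mathrm{sign}(\omega_j)\mb e_j$ constraints added when the volumetric center fell outside $\Ccal_{k_{i+1}}$ (both stored with index $-1$, hence in $\Tcal$). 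The LP solution $\mb\lambda^\star$ has no reason to avoid these: the mass $\alpha=\sum_{l\in\Tcal}\lambda_l$ can be $\Theta(1)$. You attribute the deficit $\sum_{l\in\Scal}\lambda_l\leq 1$ entirely to the coordinate-wise discretization, which gives only an $O(\sqrt d\,\xi)$ shortfall; but if $\alpha=\Theta(1)$, the factor $(1-\alpha)$ multiplying your inductive lower bound on $\sum_{l\in\Scal}\lambda_l H_{i+1}$ destroys the $\epsilon$ you are trying to preserve, so the error is not absorbable.

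The missing ingredient (Eq.\ (eq:form2) in the paper's proof) is that for $l\in\Tcal$ one still has $\mb a_l^\top(\mb x^\star_{i+1}-\mb c^{l_{i+1},\ldots,l})\geq\epsilon$, because $\mb a_l=\pm\mb e_j$ and $B_d(\mb x^\star,\epsilon)\subseteq\Ccal_d$ forces $\mb x^\star$ to be at distance at least $\epsilon$ from the cube walls (with $\mb c^{l_{i+1},\ldots,l}$ taken to be either the out-of-cube volumetric center or $-\mb a_l$). Once you have this, the extra $\alpha\epsilon$ gained when passing from $\sum_{\Scal}\lambda_l\,\mb a_l^\top(\cdot)$ to the full sum $\sum_{\Scal\cup\Tcal}\lambda_l\,\mb a_l^\top(\cdot)\leq\delta+\xi$ exactly compensates the $(1-\alpha)$ factor on the inductive hypothesis, and your induction closes. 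Without this step the argument breaks when the $\lambda$-mass concentrates on wall constraints.
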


\begin{proof}
For $i\leq r\leq p$ and $j\leq r$, we use the notation
\begin{equation*}
    \mb g_j^{l_{i+1},\ldots,l_r} = \mathsf{ApproxOracle}_{\delta,\xi,O_S}(r,j,\Pcal_{l_1}^{(1)},\ldots,\Pcal_{l_r}^{(r)}).
\end{equation*}
Using \cref{lemma:discretization_xi}, we always have for $j\in[r]$,
\begin{equation}\label{eq:discretization_G}
    \|\mb g_j^{l_{i+1},\ldots,l_r}- G(r,j,\mb x_1,\ldots,\mb x_i,\mb c^{l_{i+1}},\ldots,\mb c^{l_{i+1},\ldots,l_r})\| \leq \frac{4d}{\sigma_{min}}\xi.
\end{equation}
Also, observe that by \cref{lemma:bound_norm} the recursive outputs of $\mathsf{ApproxOracle}$ always have norm bounded by one.

Next, let $\Tcal^{l_{i+1},\ldots,l_{r-1}}$ be the set of indices corresponding to coordinates of $\mb\lambda^{l_{i+1},\ldots,l_{r-1}}$ for which the procedure $\mathsf{ApproxOracle}$ did not call for a level-$r$ computation. These correspond to 1. constraints from the initial cube $\Pcal_0$, or 2. cases when the volumetric center was out of the unit cube (l.6-7 of \cref{alg:vaidya}) and as a result, the index of the added constraint was $-1$ instead of the current iteration index $t$. Similarly as above, for any $t\in \Tcal^{l_{i+1},\ldots,l_{r-1}}$, we denote by $\mb g_r^{l_{i+1},\ldots,l_{r-1},t}$ the corresponding vector $\mb a_t$. We recall that by construction, this vector is of the form $\pm \mb e_j$ for some $j\in[k_r]$. Then, from \cref{lemma:vaidya_optimality}, since the responses of the oracle always have norm bounded by one, for all $\mb y_r\in \Ccal_{k_r}$,
\begin{equation}\label{eq:optimality_from_center_of_mass}
    \sum_{l_r\in\Scal^{l_{i+1},\ldots,l_{r-1}}\cup \Tcal^{l_{i+1},\ldots,l_{r-1}}} \lambda^{l_{i+1},\ldots,l_r} (\mb g_r^{l_{i+1},\ldots,l_r})^\top (\mb y_r - \mb c^{l_{i+1},\ldots,l_r}) \leq \delta.
\end{equation}
For conciseness, we use the shorthand $(\Scal\cup\Tcal)^{l_{i+1},\ldots,l_{r-1}}:=\Scal^{l_{i+1},\ldots,l_{r-1}}\cup \Tcal^{l_{i+1},\ldots,l_{r-1}}$, which contains all indices from coordinates of $\mb\lambda^{l_{i+1},\ldots,l_{r-1}}$. In particular,
\begin{equation}\label{eq:complete_sum}
    \sum_{l_r\in (\Scal\cup\Tcal)^{l_{i+1},\ldots,l_{r-1}}} \lambda^{l_{i+1},\ldots,l_r}=1.
\end{equation}
We now proceed to estimate the precision of the vectors $G(i,j,\mb x_1,\ldots,\mb x_i)$ as approximate separation hyperplanes for coordinates $(\mb x_1,\ldots,\mb x_i)$. Let $\mb x^\star\in \Ccal_d$ such that $B_d(\mb x^\star,\epsilon)$ is within the successful set. Then, for any choice of $l_{i+1}\in \Scal,\ldots,l_p \in\Scal^{l_{i+1},\ldots,l_{p-1}}$, since we did not query a successful vector, we have
\begin{equation*}
    O_S(\mb x_1,\ldots,\mb x_i,\mb c^{l_{i+1}},\ldots,\mb c^{l_{i+1},\ldots,l_p})^\top (\mb z -(\mb x_1,\ldots,\mb x_i,\mb c^{l_{i+1}},\ldots,\mb c^{l_{i+1},\ldots,l_p})) \geq 0, \quad \forall \mb z\in B_d(\mb x^\star,\epsilon).
\end{equation*}
As a result, because the responses from $O_S$ have unit norm,
\begin{equation}\label{eq:form1}
    O_S(\mb x_1,\ldots,\mb x_i,\mb c^{l_{i+1}},\ldots,\mb c^{l_{i+1},\ldots,l_p})^\top (\mb x^\star -(\mb x_1,\ldots,\mb x_i,\mb c^{l_{i+1}},\ldots,\mb c^{l_{i+1},\ldots,l_p})) \geq \epsilon.
\end{equation}
Now write $\mb x^\star = (\mb x_1^\star,\ldots,\mb x_p^\star)$. In addition to the previous equation, for $l_{i+1}\in \Scal,\ldots,l_{r-1} \in\Scal^{l_{i+1},\ldots,l_{r-2}}$ and any $l_r\in\Tcal^{l_{i+1},\ldots,l_{r-1}}$, one has $(\mb g_r^{l_{i+1},\ldots,l_r})^\top \mb x_r^\star+1\geq \epsilon$, because $\mb x^\star$ is within the cube $\Ccal_d$ and at least at distance $\epsilon$ from the constraints of the cube. Similarly as when $l_r\in\Scal^{l_{i+1},\ldots,l_{r-1}}$, for any $l_r\in\Tcal^{l_{i+1},\ldots,l_{r-1}}$ we denote by $\mb c^{l_{i+1},\ldots,l_r}$ the volumetric center of the polyhedron $\Pcal^{(r)}_{l_r}$ along the corresponding computation path, if $l_r$ corresponded to an added constraints when $\mb c^{l_{i+1},\ldots,l_r}\notin \Ccal_{k_r}$. Otherwise, if $l_r$ corresponded to the constraint $\mb a=\pm \mb e_j$ of the initial cube, we pose $\mb c^{l_{i+1},\ldots,l_r} = -\mb a$. Now by construction, in both cases one has $(\mb g_r^{l_{i+1},\ldots,l_r})^\top \mb c^{l_{i+1},\ldots,l_r} \leq -1$ (l.7 of \cref{alg:vaidya}). Thus,
\begin{equation}\label{eq:form2}
    (\mb g_r^{l_{i+1},\ldots,l_r})^\top (\mb x_r^\star - \mb c^{l_{i+1},\ldots,l_r}) \geq \epsilon.
\end{equation}
Recalling Eq~\eqref{eq:complete_sum}, we then sum all equations of the form Eq~\eqref{eq:form1} and Eq~\eqref{eq:form2} along the computation path, to obtain
\begin{multline*}
    (A):=\\
    \sum_{\substack{l_{i+1}\in \Scal,\ldots,\\ l_p\in\Scal^{l_{i+1},\ldots,l_{p-1}}} }
    \lambda^{l_{i+1}}\cdots \lambda^{l_{i+1},\ldots,l_p} \cdot O_S(\mb x_1,\ldots,\mb x_i,\mb c^{l_{i+1}},\ldots,\mb c^{l_{i+1},\ldots,l_p})^\top
    (\mb x^\star -(\mb x_1,\ldots,\mb x_i,\mb c^{l_{i+1}},\ldots,\mb c^{l_{i+1},\ldots,l_p}))\\
    + \sum_{i+1\leq r\leq p} \; \; \sum_{\substack{l_{i+1}\in \Scal,\ldots, l_{r-1}\in\Scal^{l_{i+1},\ldots,l_{r-2}},\\ l_r\in\Tcal^{l_{i+1},\ldots,l_{r-1}} } }
    \lambda^{l_{i+1}}\cdots \lambda^{l_{i+1},\ldots,l_r} \cdot (\mb g_r^{l_{i+1},\ldots,l_r})^\top (\mb x_r^\star - \mb c^{l_{i+1},\ldots,l_r}) 
    \geq \epsilon.
\end{multline*}
Now using the convention
\begin{equation*}
    G(r,r,\mb x_1,\ldots,\mb x_i,\mb c^{l_{i+1}},\ldots,\mb c^{l_{i+1},\ldots,l_r}) := \mb g_r^{l_{i+1},\ldots,l_r},\qquad l_r\in\Tcal^{l_{i+1},\ldots,l_{r-1}},
\end{equation*}
for any $l_{i+1}\in \Scal,\ldots,l_{r-1}\in\Scal^{l_{i+1},\ldots,l_{r-2}}$, we can write
\begin{multline*}
    (A) = \sum_{r\leq i} G(i,r,\mb x_1,\ldots,\mb x_i)^\top (\mb x_r^\star - \mb x_r)
    + \sum_{i+1\leq r\leq p}
    \sum_{\substack{l_{i+1}\in \Scal,\ldots,\\ l_{r-1}\in\Scal^{l_{i+1},\ldots,l_{r-2}} }} \lambda^{l_{i+1}} \ldots \lambda^{l_{i+1},\ldots,l_{r-1}} \\
    \times \sum_{l_r\in (\Scal\cup \Tcal)^{l_{i+1},\ldots,l_{r-1}}} \lambda^{l_{i+1},\ldots,l_r} G(r,r,\mb x_1,\ldots,\mb x_i,\mb c^{l_{i+1}},\ldots,\mb c^{l_{i+1},\ldots,l_r})^\top(\mb x_r^\star - \mb c^{l_{i+1},\ldots,l_r}).
\end{multline*}
We next relate the terms $G$ to the output of $\mathsf{ApproxOracle}$. For simplicity, let us write $\mb G = (\Pcal_{l_1}^{(1)},\ldots,\Pcal_{l_i}^{(i)})$, which by abuse of notation was assimilated to $(\mb x_1,\ldots,\mb x_i)$. Recall that by construction and hypothesis, all points where the oracle was queried belong to $\Ccal_d$, so that for instance $\|\mb x_r^\star - \mb c^{l_{i+1},\ldots,l_r}\|\leq 2\sqrt{k_r}\leq 2\sqrt d$ for any $l_r\in\Scal^{l_{i+1},\ldots,l_{r-1}}$. Using the above equations together with Eq~\eqref{eq:discretization_G} and \cref{lemma:discretization_xi} gives
\begin{align*}
    \epsilon &\leq \sum_{r\leq i} \sqb{\mathsf{ApproxOracle}_{\delta,\xi,\Ocal_f}(i,r,\mb G)^\top (\mb x_r^\star - \mb x_r) + \frac{8d^{3/2}}{\sigma_{min}}\xi}  + \sum_{i+1\leq r\leq p}
    \sum_{\substack{l_{i+1}\in \Scal,\ldots,\\ l_{r-1}\in\Scal^{l_{i+1},\ldots,l_{r-2}}}} \\
    &\qquad\qquad  \lambda^{l_{i+1}} \cdots \lambda^{l_{i+1},\ldots,l_{r-1}} \sum_{l_r\in(\Scal\cup \Tcal)^{l_{i+1},\ldots,l_{r-1}}} \lambda^{l_{i+1},\ldots,l_r} \sqb{ (\mb g_r^{l_{i+1},\ldots,l_r})^\top(\mb x_r^\star - \mb c^{l_{i+1},\ldots,l_r}) + \frac{8d^{3/2}}{\sigma_{min}}\xi}\\
    &\leq \frac{8pd^{3/2}}{\sigma_{min}}\xi +(p-i)\delta + \sum_{r\leq i} \mathsf{ApproxOracle}_{\delta,\xi,\Ocal_f}(i,r,\mb G)^\top (\mb x_r^\star - \mb x_r)
\end{align*}
where in the second inequality, we used Eq~\eqref{eq:optimality_from_center_of_mass}. Using $p\leq d$, this ends the proof of the lemma.
\end{proof}

We are now ready to show that \cref{alg:final} is a valid algorithm for convex optimization.

\begin{theorem} \label{thm:correctness_recursive_alg}
    Let $\epsilon\in(0,1)$ and $O_S:\Ccal_d\to\Rbb^d$ be a separation oracle such that the successful set contains a ball of radius $\epsilon$. Pose $\delta=\frac{\epsilon}{4d}$ and $\xi = \frac{\sigma_{min}\epsilon}{32d^{5/2}}$. Next, let $p\geq 1$ and $k_1,\ldots,k_p\leq \lceil\frac{d}{p}\rceil$ such that $k_1+\ldots+k_p=d$. With these parameters, \cref{alg:final} finds a successful vector with $(C\frac{d}{p}\ln \frac{d}{\epsilon})^{p}$ queries and using memory $\Ocal(\frac{d^2}{p}\ln\frac{d}{\epsilon})$, for some universal constant $C>0$.
\end{theorem}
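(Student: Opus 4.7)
The plan is to combine three ingredients already established in this section: the correctness of Vaidya's method (\cref{lemma:vaidya_optimality}), the recursive approximate-separation guarantee (\cref{lemma:valid_subgradients}), and the $\Ocal(d)$ constraint-count bound for Vaidya's polyhedron (\cref{lemma:nb_constraints}). I would split the argument into three parts: correctness, oracle complexity, and memory.

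\textbf{Correctness.} Suppose for contradiction that \cref{alg:final} never queries $O_S$ at a successful point. The outer call runs \cref{alg:vaidya} with parameters $\delta,\xi$ on the oracle $\mathsf{ApproxOracle}_{\delta,\xi,O_S}(1,1,\cdot)$, whose responses have norm at most one by \cref{lemma:bound_norm}. Applying \cref{lemma:valid_subgradients} with $i=j=1$ at each volumetric center $\mb x_1$ produced during the outer run, any $\mb x^\star = (\mb x_1^\star,\ldots,\mb x_p^\star) \in \Ccal_d$ with $B_d(\mb x^\star,\epsilon)\subset Q$ satisfies
\begin{equation*}
\mathsf{ApproxOracle}_{\delta,\xi,O_S}(1,1,\Pcal^{(1)})^\top (\mb x_1^\star - \mb x_1) \geq \epsilon - \frac{8 d^{5/2}}{\sigma_{min}}\xi - d\delta \geq \frac{\epsilon}{2},
\end{equation*}
by the choice $\delta = \epsilon/(4d)$ and $\xi = \sigma_{min}\epsilon/(32d^{5/2})$. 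Combined with the rounding $b \leq \mb g^\top \mb x_1 + \xi$ in line 10 of \cref{alg:vaidya}, every stored constraint $(\mb g,b)$ of the outer polyhedron verifies $\mb g^\top \mb x_1^\star - b \geq \epsilon/4$; the initial cube constraints $\mb a = \pm \mb e_j$, $b=-1$ also verify $\mb a^\top \mb x_1^\star - b \geq \epsilon$, since each coordinate of $\mb x_1^\star$ lies in $[-1+\epsilon,1-\epsilon]$. Hence $\max_{\mb x\in \Ccal_{k_1}} \min_i \mb a_i^\top \mb x - b_i \geq \epsilon/4 > \delta$, contradicting \cref{lemma:vaidya_optimality} applied to the outer Vaidya run.

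\textbf{Oracle complexity.} Let $N_i$ denote the total number of level-$i$ calls to $\mathsf{ApproxOracle}$. The outer Vaidya performs $N_1 = T(\delta,k_1) = \Ocal(k_1 \ln(d/\delta))$ level-$1$ calls. For $1 \leq i < p$, each level-$i$ call invokes $\mathsf{ApproxSeparationVector}$, which (by the discussion preceding \cref{lemma:bound_norm}) triggers at most $T(\delta,k_{i+1})$ level-$(i+1)$ calls for $O_y$ plus at most $k_{i+1}/\sigma_{min}+1$ calls for $O_x$, giving $N_{i+1} \leq N_i \cdot \Ocal(k_{i+1}\ln(d/\delta))$. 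Since each level-$p$ call produces exactly one $O_S$ query, the total query count is
\begin{equation*}
N_p \leq \prod_{i=1}^p \Ocal\!\left(k_i \ln\frac{d}{\delta}\right) = \Ocal\!\left( \left( \tfrac{Cd}{p} \ln \tfrac{d}{\epsilon}\right)^p \right),
\end{equation*}
using $k_i \leq \lceil d/p \rceil$ and $\delta = \epsilon/(4d)$.

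\textbf{Memory.} Between queries, the recursion stack stores, along the active computation path, one polyhedron $\Pcal^{(i)} \in \Ical_{k_i}$ per level, plus auxiliary state ($\mb\lambda^\star$, partial sum $\mb u$, index list $\mb k^\star$, iteration counters). By \cref{lemma:nb_constraints} each $\Pcal^{(i)}$ contains at most $k_i/\sigma_{min}+1$ constraints, each a triple $(k,\mb a,b)\in \Zbb\times\Rbb^{k_i+1}$ discretized at precision $\xi$, occupying $\Ocal(k_i^2\ln(d/\xi))$ bits. Summing over the $p$ levels,
\begin{equation*}
\sum_{i=1}^p \Ocal\!\left(k_i^2 \ln\frac{d}{\xi}\right) = \Ocal\!\left(\frac{d^2}{p}\ln\frac{d}{\epsilon}\right),
\end{equation*}
while the auxiliary state contributes $\Ocal(d\ln(d/\epsilon))$ bits in total, a lower-order term. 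The main delicate point is correctness: the $p$-level approximation cascade must still yield an effective separation of at least $\epsilon/2$. This is precisely what the calibration $\xi = \Theta(\epsilon/d^{5/2})$ and $\delta = \Theta(\epsilon/d)$ achieves via \cref{lemma:valid_subgradients}, after which the contradiction with \cref{lemma:vaidya_optimality} at the outer level is immediate.
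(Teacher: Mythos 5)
Your proposal is correct and follows essentially the same approach as the paper's proof: contradiction via \cref{lemma:valid_subgradients} plus \cref{lemma:vaidya_optimality} for correctness, a per-level product bound for query complexity, and summing $\Ocal(k_i^2\ln\frac{d}{\xi})$ over levels for memory. Your treatment of the initial cube constraints $(\pm\mb e_j, -1)$, which the paper's proof treats implicitly, is a welcome bit of added care; the only minor imprecision is that you count $T(\delta,k_{i+1})$ rather than $2T(\delta,k_{i+1})$ level-$(i+1)$ calls per level-$i$ run (Vaidya is run twice in \cref{alg:grad_estimation}), but this does not affect the asymptotics.
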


\begin{proof}
    Suppose by contradiction that \cref{alg:final} never queried a successful point. Then, with the chosen parameters, \cref{lemma:valid_subgradients} shows that, for any vector $\mb x^\star=(\mb x_1^\star,\ldots,\mb x^\star_p)$ such that $B_d(\mb x^\star,\epsilon)$ is within the successful set, with the same notations, one has
    \begin{equation*}
        \sum_{r\leq i} \mathsf{ApproxOracle}_{\delta,\xi,O_S}(i,r,\Pcal_{l_1}^{(1)},\ldots,\Pcal_{l_i}^{(i)})^\top (\mb x_r^\star - \mb x_r) \geq \epsilon - \frac{8d^{5/2}}{\sigma_{min}}\xi -d\delta  \geq  \frac{\epsilon}{2}.
    \end{equation*}
    Now denote by $(\mb a_t,b_t)$ the constraints that were added at any time during the run of \cref{alg:vaidya} when using the oracle $\mathsf{ApproxOracle}$ with $i=j=1$. The previous equation shows that for all such constraints,
    \begin{equation*}
        \mb a_t^\top \mb x_1^\star - b_t \geq \mb a_t^\top (\mb x_1^\star - \omega_t) - \xi \geq  \frac{\epsilon}{2}-\xi,
    \end{equation*}
    where $\mb \omega_t$ is the volumetric center of the polyhedron at time $t$ during Vaidya's method \cref{alg:vaidya}. Now, since the algorithm terminated, by \cref{lemma:vaidya_optimality}, we have that
    \begin{equation*}
        \min_t (\mb a_t^\top \mb x_1^\star - b_t) \leq \delta.
    \end{equation*}
    This is absurd since $\delta+\xi<\frac{\epsilon}{2}$. This ends the proof that \cref{alg:final} finds a successful vector.

    We now estimate its oracle-complexity and memory usage. First, recall that a run of $\mathsf{ApproxOracle}$ of level $i$ makes $\Ocal(k_{i+1}\ln\frac{1}{\delta})$ calls to level-$(i+1)$ runs of $\mathsf{ApproxOracle}$. As a result, the oracle-complexity $Q_d(\epsilon;k_1,\ldots,k_p)$ satisfies
    \begin{equation*}
        Q_d(\epsilon;k_1,\ldots,k_p) =\paren{Ck_{1}\ln\frac{1}{\delta}} \times \ldots \times \paren{Ck_p\ln\frac{1}{\delta}} \leq \paren{C'\frac{d}{p}\log \frac{d}{\epsilon}}^p
    \end{equation*}
    for some universal constants $C,C'\geq 2$.

    We now turn to the memory of the algorithm. For each level $i\in [p]$ of runs for $\mathsf{ApproxOracle}$, we keep memory placements for
    \begin{enumerate}
        \item the value $j^{(i)}$ of the corresponding call to $\mathsf{ApproxOracle}(i,j^{(i)},\cdot)$ (for l.6-7 of \cref{alg:recursive_grad}): $\Ocal(\ln d)$ bits,
        \item the iteration number $t^{(i)}$ during the run of \cref{alg:vaidya} or within \cref{alg:grad_estimation}: $\Ocal(\ln(k_i\ln\frac{1}{\delta}))$ bits
        \item the polyhedron constraints contained in the state of $\Pcal^{(i)}$: $\Ocal(k_i\times k_i\ln\frac{1}{\xi})$ bits,
        \item potentially, already computed dual variables $\mb\lambda^\star$ and their corresponding vector of constraint indices $\mb k^\star$ (l.3 of \cref{alg:grad_estimation}): $\Ocal(k_i\times \ln\frac{1}{\xi})$ bits,
        \item the working vector $\mb u^{(i)}$ (updated l.8 of \cref{alg:grad_estimation}): $\Ocal(k_i\ln\frac{1}{\xi})$ bits.
    \end{enumerate}
    The memory structure is summarized in \cref{tab:memory_format}.

    We can then check that this memory is sufficient to run \cref{alg:final}. An important point is that for any run of $\mathsf{ApproxOracle}(i,j,\cdot)$, in \cref{alg:grad_estimation}, after running Vaidya's method \cref{alg:vaidya} and storing the dual variables $\mb\lambda^\star$ and corresponding indices $\mb k^\star$ within their placements $({\mb k^\star}^{(i)},{\mb \lambda^\star}^{(i)})$ (l.1-3 of \cref{alg:grad_estimation}), the iteration index $t^{(i)}$ and polyhedron $\Pcal^{(i)}$ memory placements are reset and can be used again for the second run of Vaidya's method (l.4-10 of \cref{alg:grad_estimation}). During this second run, the vector $\mb u$ is stored in its corresponding memory placement $\mb u^{(i)}$ and updated along the algorithm. Once this run is finished, the output of $\mathsf{ApproxOracle}(i,j,\cdot)$ is readily available in the placement $\mb u^{(i)}$. For $i=p$, the algorithm does not need to wait for the output of a level-$(i+1)$ computation and can directly use the $j^{(p)}$-th component of the returned separation vector from the oracle $O_S$. As a result, the number of bits of memory used throughout the algorithm is at most
    \begin{equation*}
        M= \sum_{i=1}^p \Ocal\paren{k_i^2 \ln\frac{1}{\xi}}= \Ocal\paren{\frac{d^2}{p}\ln\frac{d}{\epsilon}}.
    \end{equation*}
    This ends the proof of the theorem.
\end{proof}

\begin{table}[]
    \centering
    \renewcommand{\arraystretch}{1.5}
    \begin{tabular}{r|c|c|c|}
    
    \multicolumn{1}{r}{\begin{tabular}{c}$i$\end{tabular}} & \multicolumn{1}{c}{1} & \multicolumn{1}{c}{$\ldots$} & \multicolumn{1}{c}{$p$}\\
     \cline{2-4}
     \begin{tabular}{l}$j$\end{tabular}   & $j^{(1)}$ & & $j^{(p)}$\\
     \cline{2-4}
      \begin{tabular}{l}Iteration index\end{tabular}   & $t^{(1)}$ & & $t^{(p)}$\\
      \cline{2-4} 
      \begin{tabular}{l} Polyhedron \end{tabular}   &  $\Pcal^{(1)}= \paren{
      \renewcommand{\arraystretch}{1}
      \begin{matrix}
      k_1,\mb a_1,b_1\\
      k_2, \mb a_2,b_2\\
      \ldots\\
      k_m,\mb a_m,b_m
      \end{matrix}}
      $
      & & $\Pcal^{(p)}$\\
      \cline{2-4}
      \renewcommand{\arraystretch}{1}
       \begin{tabular}{r}Computed \\dual variables\end{tabular} & $({\mb k^\star}^{(1)},{\mb\lambda^\star}^{(1)}) = \paren{
       \renewcommand{\arraystretch}{1}
      \begin{matrix}
      k_1^\star,\lambda_1^\star\\
      k_2^\star, \lambda_2^\star\\
      \ldots
      \end{matrix}}$ && $({\mb k^\star}^{(p)},{\mb \lambda^\star}^{(p)})$\\
      \cline{2-4}
      \renewcommand{\arraystretch}{1}
      \begin{tabular}{r}Working \\separation vector\end{tabular} & $\mb u^{(1)}$ & & $\mb u^{(p)}$ \\
      \cline{2-4}
    \end{tabular}
    \renewcommand{\arraystretch}{1}
    \caption{Memory structure for \cref{alg:final}}
    \label{tab:memory_format}
\end{table}

\section{Memory-constrained feasibility problem with computations}
\label{section:algorithm_with_comp}

In the last section we gave the main ideas that allow reducing the storage memory. However, \cref{alg:final} does not account for memory constraints in computations as per \cref{def:memory_constraint_with_comp}. For instance, computing the volumetric center $\mathsf{VolumetricCenter}(\Pcal)$ already requires infinite memory for infinite precision. More importantly, even if one discretizes the queries, the necessary precision and computational power may be prohibitive with the classical Vaidya's method \cref{alg:vaidya}. Even finding a feasible point in the polyhedron (let alone the volumetric center) using only the constraints is itself computationally intensive. There has been significant work to make Vaidya's method computationally tractable \cite{vaidya1996new,anstreicher1997vaidya,anstreicher1998towards}. These works address the issue of computational tractability, but the memory issue is still present. Indeed, the precision depends among other parameters on the condition number of the matrix $\mb H$ in order to compute the leverage scores $\sigma_i$ for $i\in [m]$, which may not be well-conditioned. Second, to avoid memory overflow, we also need to ensure that the point queried have bounded norm, which is again not a priori guaranteed in the original version \cref{alg:vaidya}. 

To solve these issues and also give a computationally-efficient algorithm, the cutting-plane subroutine \cref{alg:vaidya} needs to be modified. In particular, the volumetric barrier needs to include regularization terms. Fortunately, these have already been studied in \cite{lee2015faster}. In a major breakthrough, this paper gave a cutting-plane algorithm with $\Ocal(d^3\ln^{\Ocal(1)}\frac{d}{\epsilon})$ runtime complexity, improving over the seminal work from Vaidya and subsequent works which had $\Ocal(d^{1+\omega}\ln^{\Ocal(1)}\frac{d}{\epsilon})$ runtime complexity, where $\Ocal(d^\omega)$ is the computational complexity of matrix multiplication. To achieve this result, they introduce various regularizing terms together with the logarithmic barrier. While the main motivation of \cite{lee2015faster} was computational complexity, as a side effect, these regularization terms also ensure that computations can be carried with efficient memory. We then use their method as a subroutine.

For the sake of exposition and conciseness, we describe a simplified version of their method, that is also deterministic. This comes at the expense of a suboptimal running time $\Ocal(d^{1+\omega} \ln^{\Ocal(1)}\frac{1}{\epsilon})$. We recall that our main concern is in memory usage rather than achieving the optimal runtime. The main technicality of this section is to show that their simplified method is numerically stable, and we emphasize that the original algorithm could also be shown to be numerically stable with similar techniques, leading to a time improvement from $\tilde\Ocal(d^{1+\omega})$ to $\tilde\Ocal(d^3)$. The memory usage, however, would not be improved.

\comment{
Before describing the variant of the Vaidya's method, we need to explicit the procedure to call the separation oracle.

\begin{algorithm}
\caption{$\textsc{NormalizedOracle}(x;\delta)$}\label{alg:oracle_call}
    \KwIn{$\mc O_S:C \to \mb R^d$, $x$, $\delta\in (0,1)$}
    \uIf{there exists $i\in [d]$ with $|x_i|>1$}{
        \Return $-sign(x_i)\cdot  e_i$
    }
    \Else{
        Call oracle at $x$ to receive $a\approx \Ocal_S(x)$ up to precision $\delta^2/8$\;

        Compute $\approx \|a\|$ up to precision $\delta/8$.

        \uIf{$\|a\|\leq 3\delta/4$}{
            \Return $\hat a = 0$
        }
        \Else{
            \Return $\hat a \approx \frac{a}{\|a\|}$ up to precision $\delta/4$
        }
    }
\end{algorithm}

This procedure allows renormalizing the response of the separation oracle.

\begin{lemma}\label{lemma:normalizing_subgradients}
    Let $x\in\Rbb^d$ and $\delta\in (0,1)$. Let $\hat a$ be the output of \cref{alg:oracle_call} for these parameters. Then, either $\hat a=0$ and $\|\Ocal_S(x)\|\leq \delta$, or $1/2\leq \| \hat a\|\leq 2$ and $\|a-\frac{\Ocal_S(x)}{\|\Ocal_S(x)\|}\| \leq \delta$.
\end{lemma}

\begin{proof}
    Consider the case when the returned vector is $\hat a =0$. In that case, since $\|a\|$ was computed up to precision $\delta/8$, we have $\|a\|\leq 3\delta/4+\delta/8=7\delta/8$. Note that $\|a-\Ocal_S(x)\| \leq \delta^2/8$, so that $\|\Ocal_S(x)\| \leq 7\delta/8+\delta^2/8\leq \delta$.

    In the second case, similar arguments show that $\|a\|\geq 3\delta/4-\delta/8\geq\delta/2$. As a result,
    \begin{equation*}
        \left\|\hat a - \frac{\Ocal_S(x)}{\|\Ocal_S(x)\|} \right\| \leq \frac{\delta}{4} + \left\|\frac{a}{\|a\|} - \frac{\Ocal_S(x)}{\|\Ocal_S(x)\|} \right\| \leq \frac{\delta}{4} + \frac{\|a-\Ocal_S(x)\|}{\|a\|} + \frac{|\|a\|-\|\Ocal_S(x)\||}{\|a\|} \leq \frac{\delta}{4} + 2\frac{\delta^2/8}{\delta/2}\leq \delta.
    \end{equation*}
    This ends the proof of the lemma.
\end{proof}

}

\subsection{A memory-efficient Vaidya's method for computations, via \cite{lee2015faster}}

Fix a polyhedron $\Pcal=\{\mb x:\mb A \mb x\geq \mb b\}$. Using the same notations as for Vaidya's method in \cref{subsec:vaidya}, we define the new leverage scores $\psi(\mb x)_i = (\mb A_x(\mb A_x^\top \mb A_x + \lambda \mb I)^{-1} \mb A_x^\top)_{i,i}$ and $\mb \Psi(\mb x) = diag(\mb \psi(\mb x))$. Let $\mu(\mb x) = \min_i \psi(\mb x)_i$. Last, let $\mb Q(\mb x) = \mb A_x^\top (c_e \mb I + \mb \Psi(x) ) \mb A_x + \lambda \mb I$, where $c_e>0$ is a constant parameter to be defined. In \cite{lee2015faster}, they consider minimizing the volumetric-analytic hybrid barrier function
\begin{equation*}
    p(\mb x) = -c_e\sum_{i=1}^m \ln s_i(\mb x) +\frac{1}{2}\ln\det(\mb A_x^\top \mb A_x + \lambda \mb I) + \frac{\lambda}{2} \|\mb x\|^2_2.
\end{equation*}
We can check \cite{lee2015faster} that
\begin{equation*}
    \nabla p(\mb x) = -\mb A_x^\top (c_e\cdot \mb 1 + \mb \psi(x)) + \lambda \mb x,
\end{equation*}
where $\mb 1$ is the vector of ones. The following procedure gives a way to minimize this function efficiently given a good starting point.

\begin{algorithm}[ht]

\LinesNumbered
\caption{$\mb x^{(r)}=\mathsf{Centering}(\mb x^{(0)},r)$}
\label{alg:centering_lsw}

    \KwIn{Initial point $\mb x^{(0)}\in \Pcal=\{\mb x:\mb A\mb x\geq \mb b\}$}
    \KwIn{Number of iterations $r>0$}
    \Given{$\|\nabla p (\mb x^{(0)})\|_{\mb Q(\mb x^{(0)})^{-1}} \leq \frac{1}{100} \sqrt{c_e+\mu(x^{(0)})}:=\eta$.}

    
    \For{$k=1$ to $r$}{
        \lIf{$\|\nabla p(\mb x^{(k-1)})\|_{\mb Q(\mb x^{(0)})^{-1}} \leq 2(1-\frac{1}{64})^r \eta$}{\textbf{Break}}
        $\mb x^{(k)} = \mb x^{(k-1)} - \frac{1}{8} \mb Q(\mb x^{(0)})^{-1}\nabla p(\mb x^{(k-1)})$\,
    }
    \KwOut{$\mb x^{(k)}$}
\end{algorithm}

We then present their simplified cutting-plane method.

\begin{algorithm}

\LinesNumbered

\caption{An efficient cutting-plane method, simplified from \cite{lee2015faster}}
\label{alg:cutting_planes_lsw}
    \KwIn{$\epsilon,\delta>0$ and a separation oracle $O:\Ccal_d\to\Rbb^d$}
    \Check{Throughout the algorithm, if $s_i(\mb x^{(t)})<2\epsilon$ for some $i$ then \Return  $(\Pcal_t,\mb x^{(t)} )$}

    Initialize $\mb x^{(0)}=\mb 0$ and $\Pcal_0 := \{(-1, \mb e_i,-1),(-1, -\mb e_i,-1),~i\in[d]\}$\,

    \For{$t\geq 0$}{

        \uIf{$\min_{i\in [m]}\psi(\mb x^{(t)})_i \leq c_d$}{
            $\Pcal_{t+1} = \Pcal_t\setminus \{(k_j,\mb a_j,b_j)\}$ where $j\in\arg\min_{i\in [m]}\psi(\mb x^{(t)})_i$
        }
        \uElse{
            \lIf{$\mb x^{(t)} \notin \Ccal_d$}{
                $\mb a = -sign(x_i)\mb e_i$ where $i\in\arg\min_{j\in[d]}|x^{(t)}_j|$
            }
            \lElse{
                $\mb a = O(\mb x^{(t)})$
            }
            Let $b=\mb a^\top \mb x^{(t)} - c_a^{-1/2} \sqrt{\mb a^\top (\mb A^\top \mb S_{x^{(t)}}^{-2} \mb A + \lambda \mb I)^{-1} a}$\,
            
            $\Pcal_{t+1} = \Pcal_t \cup\{(t,\mb a,b)\}$
        }
        $\mb x^{(t+1)}=\mathsf{Centering}(\mb x^{(t)},270,c_\Delta)$
    }
    
\end{algorithm}

In both \cref{alg:centering_lsw} and \cref{alg:cutting_planes_lsw}, notice that the updates require to compute in particular the leverage scores $\mb \psi(\mb x)$, which can be computed in $\Ocal(d^\omega)$ time using their formula. To achieve the $\Ocal(d^3 \ln^{\Ocal(1)}\frac{1}{\epsilon})$ computational complexity, an amortized computational cost $\Ocal(d^2)$ is needed. The algorithm from \cite{lee2015faster} achieves this through various careful techniques aiming to update estimates of these leverage scores. The above cutting-plane algorithm is exactly that of \cite{lee2015faster} when these estimates are always exact (i.e. recomputed at each iteration), which yields the $d^{\omega-2}$ overhead time complexity. In particular, the original proof of convergence and correctness of \cite{lee2015faster} directly applies to this simplified algorithm.

It remains to check whether one can implement this algorithm with efficient memory, corresponding to checking this method's numerical stability. 

\begin{lemma}\label{lemma:centering_lsw}
    Suppose that each iterate of the centering \cref{alg:centering_lsw}, $\|\nabla p(\mb x^{(k-1)})\|_{\mb Q(\mb x^{(0)})^{-1}}$ is computed up to precision $(1-\frac{1}{64})^r\eta$ (l.2), and $\mb x^{(k)}$ is computed up to an error $\mb \zeta^{(k)}$ with $\|\mb \zeta^{(k)}\|_{\mb Q(\mb x^{(0)})} \leq \frac{1}{2^{10}r}(1-\frac{1}{64})^r\eta$ (l.3). Then, \cref{alg:centering_lsw} outputs $\mb x^{(k)}$ such that $\|\nabla p (\mb x^{(k)})\|_{\mb Q^{-1}(\mb x^{(k)})} \leq 3(1-\frac{1}{64})^r \eta$ and all iterates computed during the procedure satisfy $\|\mb S_{x^{(0)}}^{-1} (\mb s(\mb x^{(t)})-\mb s(\mb x^{(0)}))\|_2 \leq \frac{1}{10}$.
\end{lemma}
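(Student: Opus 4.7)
The plan is to set up a joint induction on $k$ showing simultaneously that the gradient norm contracts geometrically (up to an additive tail from the numerical perturbations) and that the iterate stays within a small $\mb Q(\mb x^{(0)})$-ball around $\mb x^{(0)}$. These two facts have to be proved together because the contraction argument inherited from \cite{lee2015faster} is only valid in a neighborhood of $\mb x^{(0)}$ where the Hessian is well-approximated by $\mb Q(\mb x^{(0)})$, while the displacement bound in turn depends on the geometric decay of gradient norms.

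First I would control the displacement. In the exact Newton update, the $j$-th step has size $\tfrac{1}{8}\|\nabla p(\mb x^{(j-1)})\|_{\mb Q(\mb x^{(0)})^{-1}}$ in the metric $\mb Q(\mb x^{(0)})$. Adding the per-step rounding error $\mb\zeta^{(j)}$ of $\mb Q(\mb x^{(0)})$-norm at most $\tfrac{1}{2^{10}r}(1-\tfrac{1}{64})^r\eta$, the total displacement satisfies
\begin{equation*}
\|\mb x^{(k)}-\mb x^{(0)}\|_{\mb Q(\mb x^{(0)})} \;\leq\; \sum_{j=1}^{k}\tfrac{1}{8}\|\nabla p(\mb x^{(j-1)})\|_{\mb Q(\mb x^{(0)})^{-1}} + r\cdot\tfrac{1}{2^{10}r}(1-\tfrac{1}{64})^r\eta \;\leq\; C_0\,\eta
\end{equation*}
for a small absolute constant $C_0$, using the inductive bound that the gradient norms form a near-geometric sequence. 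Since $\mb Q(\mb x^{(0)})\succeq c_e\,\mb A_{x^{(0)}}^\top \mb A_{x^{(0)}}$ and $\eta=\tfrac{1}{100}\sqrt{c_e+\mu(\mb x^{(0)})}$, and because $\mb s(\mb x)-\mb s(\mb x^{(0)})=\mb A(\mb x-\mb x^{(0)})$, this yields
\begin{equation*}
\|\mb S_{x^{(0)}}^{-1}(\mb s(\mb x^{(k)})-\mb s(\mb x^{(0)}))\|_2 = \|\mb A_{x^{(0)}}(\mb x^{(k)}-\mb x^{(0)})\|_2 \leq \tfrac{1}{\sqrt{c_e}}\|\mb x^{(k)}-\mb x^{(0)}\|_{\mb Q(\mb x^{(0)})} \leq \tfrac{1}{10},
\end{equation*}
once $C_0$ is sufficiently small, giving the second conclusion.

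Next I would propagate the gradient norm through perturbed steps. The noise-free analysis of \cite{lee2015faster} gives the contraction $\|\nabla p(\tilde{\mb x}^{(k)})\|_{\mb Q(\mb x^{(0)})^{-1}}\leq (1-\tfrac{1}{64})\|\nabla p(\mb x^{(k-1)})\|_{\mb Q(\mb x^{(0)})^{-1}}$ for the idealized iterate $\tilde{\mb x}^{(k)}=\mb x^{(k-1)}-\tfrac{1}{8}\mb Q(\mb x^{(0)})^{-1}\nabla p(\mb x^{(k-1)})$, valid inside the neighborhood controlled in the previous paragraph. Since the slack variables stay within a factor close to one of those at $\mb x^{(0)}$, the Hessian $\nabla^2 p$ is bounded above by an absolute multiple of $\mb Q(\mb x^{(0)})$ on this neighborhood, so $\|\nabla p(\mb x^{(k)})-\nabla p(\tilde{\mb x}^{(k)})\|_{\mb Q(\mb x^{(0)})^{-1}} \leq L\,\|\mb\zeta^{(k)}\|_{\mb Q(\mb x^{(0)})}$ for some constant $L$. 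Unrolling the recursion gives
\begin{equation*}
\|\nabla p(\mb x^{(k)})\|_{\mb Q(\mb x^{(0)})^{-1}} \leq (1-\tfrac{1}{64})^k\eta + L\sum_{j=1}^k (1-\tfrac{1}{64})^{k-j}\|\mb\zeta^{(j)}\|_{\mb Q(\mb x^{(0)})} \leq 2(1-\tfrac{1}{64})^k\eta,
\end{equation*}
where the per-step tolerance $\tfrac{1}{2^{10}r}(1-\tfrac{1}{64})^r\eta$ and the matching precision in the stopping test are calibrated precisely so that neither the accumulated step noise nor a premature break can inflate the bound past $2(1-\tfrac{1}{64})^k\eta$.

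Finally I would convert from the reference metric $\mb Q(\mb x^{(0)})$ to $\mb Q(\mb x^{(k)})$. Because the multiplicative slack perturbation is at most $\tfrac{1}{10}$, the matrices $\mb A_{x^{(k)}}^\top \mb A_{x^{(k)}}$ and $\mb A_{x^{(0)}}^\top \mb A_{x^{(0)}}$ are within a universal constant factor; the same applies to the leverage-score diagonals $\mb\Psi$ via a standard stability estimate for $\psi_i$. Hence $\mb Q(\mb x^{(k)})\succeq \tfrac{4}{9}\mb Q(\mb x^{(0)})$, and so $\|\nabla p(\mb x^{(k)})\|_{\mb Q(\mb x^{(k)})^{-1}} \leq \tfrac{3}{2}\|\nabla p(\mb x^{(k)})\|_{\mb Q(\mb x^{(0)})^{-1}} \leq 3(1-\tfrac{1}{64})^k\eta$, completing the first conclusion. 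The main obstacle I anticipate is avoiding circularity in the induction: the Hessian comparison that underlies the contraction needs the displacement to stay small, whereas the displacement bound itself uses the geometric decay of the gradient norms. I would resolve this by carrying both invariants simultaneously through induction on $k$, with the buffer factor of $2$ (rather than $1$) in the intermediate gradient bound, and the factor $C_0$ slack in the neighborhood radius, chosen precisely to absorb the cumulative error contribution without degrading either invariant.
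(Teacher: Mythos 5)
Your proof takes essentially the same approach as the paper's: a joint induction carrying both the $\mb Q(\mb x^{(0)})$-displacement bound and the geometric decay of $\|\nabla p(\mb x^{(t)})\|_{\mb Q(\mb x^{(0)})^{-1}}$, with the Hessian comparison $\nabla^2 p(\mb y)\preceq 16\mb Q(\mb x^{(0)})$ (from \cite[Lemma 11]{lee2015faster}) used to convert per-step rounding error $\mb\zeta^{(k)}$ into a gradient-norm error. There is one genuine variation worth noting: you propagate the additive error by unrolling a geometric sum, whereas the paper observes that iteration $t+1$ is only reached if $\|\nabla p(\mb x^{(t)})\|_{\mb Q^{-1}}\geq(1-\frac{1}{64})^r\eta$, and uses this lower bound to absorb the additive error into a degradation of the multiplicative contraction rate from $1-\frac{1}{32}$ to $1-\frac{1}{64}$. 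The paper's route preserves the exact inductive invariant $(1-\frac{1}{64})^t\eta$, while yours carries a factor-$2$ buffer; both are workable here since the displacement bound is loose.

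However, two points in your write-up are not tight enough to be taken at face value. First, your final conclusion is stated as $\leq 3(1-\frac{1}{64})^k\eta$ rather than $3(1-\frac{1}{64})^r\eta$; since the break test in l.2 compares the computed norm to the \emph{fixed} threshold $2(1-\frac{1}{64})^r\eta$, the guarantee upon an early stop is governed by $r$, not by the iteration index $k$ at which the break fires. You flag the early-stop issue only in a parenthetical ("the matching precision in the stopping test are calibrated precisely so that\ldots"), but this is exactly where the $2(1-\frac{1}{64})^r\eta+(1-\frac{1}{64})^r\eta=3(1-\frac{1}{64})^r\eta$ accounting must be done, as in the paper. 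Second, your closing conversion from $\|\cdot\|_{\mb Q(\mb x^{(0)})^{-1}}$ to $\|\cdot\|_{\mb Q(\mb x^{(k)})^{-1}}$ spends a factor $\frac{3}{2}$, but in the early-stop branch you would already have the bound $3(1-\frac{1}{64})^r\eta$ in the $\mb Q(\mb x^{(0)})$ metric, and multiplying by $\frac{3}{2}$ exceeds the stated constant $3$. The paper's own proof in fact works entirely with $\mb Q=\mb Q(\mb x^{(0)})$ and does not perform this last conversion, so the step you add is not free; if you insist on it, the constants upstream (the stopping threshold or the rounding budget) would need to be tightened accordingly.
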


\begin{proof}
    As mentioned above, without computation errors, the result from \cite{lee2015faster} would apply directly. Here, we simply adapt the proof to the case with computational errors to show that it still applies. Denote $\mb Q = \mb Q(\mb x^{(0)})$ for convenience. Let $\eta = \frac{1}{100}\sqrt{c_e + \mu(\mb x^{(0)})} $. We prove by induction that $\|\mb x^{(t)} - \mb x^{(0)}\|_{\mb Q} \leq 9\eta$, $\|\nabla p (\mb x^{(t)})\|_{\mb Q^{-1}} \leq (1-\frac{1}{64})^t\eta$ for all $t\leq r$. For a given iteration $t$, denote $\tilde {\mb x}^{(t+1)} = \mb x^{(k-1)} - \frac{1}{8} \mb Q^{-1}\nabla p(\mb x^{(k-1)})$ the result of the exact computation. The same arguments as in the original proof give $\|\tilde {\mb x}^{(t+1)}-\mb x^{(0)}\|_{\mb Q} \leq 9\eta$, and
    \begin{equation*}
        \|\nabla p (\tilde {\mb x}^{(t+1)})\|_{\mb Q^{-1}} \leq \paren{1-\frac{1}{32}} \|\nabla p (\mb x^{(t)})\|_{\mb Q^{-1}}.
    \end{equation*}
    Now because $\|\tilde {\mb x}^{(t+1)} - \mb x^{(t+1)}\|_{\mb Q} \leq \eta$, we have $\|\tilde {\mb x}^{(t+1)} - \mb x^{(0)}\|_{\mb Q}, \| \mb x^{(t+1)} - \mb x^{(0)}\|_{\mb Q} \leq 10\eta $, so that \cite[Lemma 11]{lee2015faster} gives $\mb \nabla^2 p(\mb y(u))\preceq 8\mb Q(\mb y(u))\preceq 16 \mb Q$, where $\mb y(u) = \mb x^{(t+1)} + u(\tilde {\mb x}^{(t+1)} - \mb x^{(t+1)}$ for $u\in[0,1]$. Thus,
    \begin{equation*}
        \|\nabla p (\tilde {\mb x}^{(t+1)}) - \nabla p(\mb x^{(t+1)})\|_{\mb Q^{-1}} \leq  \left\|\int_0^1 \mb \nabla^2 p(\mb y(u)) (\tilde {\mb x}^{(t+1)} - \mb x^{(t+1)})\right\|_{\mb Q^{-1}} \leq  16\|\tilde {\mb x}^{(t+1)} - \mb x^{(t+1)}\|_{\mb Q}.
    \end{equation*}
    Now by construction of the procedure, if the algorithm performed iteration $t+1$, we have $\|\nabla p (\mb x^{(t)})\|_{\mb Q^{-1}} \geq (1-\frac{1}{64})^r\eta$. Combining this with the fact that $\|\tilde {\mb x}^{(t+1)} - \mb x^{(t+1)}\|_{\mb Q}\leq \frac{1}{2^{10}r}(1-\frac{1}{64})^r\eta$, obtain
    \begin{equation*}
        \|\nabla p ( \mb x^{(t+1)})\|_{\mb Q^{-1}} \leq \|\nabla p (\tilde {\mb x}^{(t+1)}) - \nabla p(\mb x^{(t+1)})\|_{\mb Q^{-1}} + \|\nabla p (\tilde {\mb x}^{(t+1)})\|_{\mb Q^{-1}} \leq \paren{1-\frac{1}{64}} \|\nabla p (\mb x^{(t)})\|_{\mb Q^{-1}}.
    \end{equation*}
    We now write
    \begin{equation*}
        \|\mb x^{(t+1)} - \mb x^{(0)}\|_{\mb Q} \leq \sum_{k=0}^t \|\tilde {\mb x}^{(k+1)}-\mb x^{(k+1)}\|_{\mb Q} + \frac{1}{8}\|\mb Q^{-1}\nabla p(\mb x^{(k)})\|_{\mb Q} \leq \eta + \frac{1}{8}\sum_{i=0}^\infty \paren{1-\frac{1}{64}}^i \eta \leq 9\eta.
    \end{equation*}
    The induction is now complete. When the algorithm stops, either the $r$ steps were performed, in which case the induction already shows that $\|\nabla p(\mb x^(r))\|_{\mb Q^{-1}} \leq (1-\frac{1}{64})^r\eta$. Otherwise, if the algorithm terminates at iteration $k$, because $\|\nabla p(\mb x^(k))\|_{\mb Q^{-1}}$ was computed to precision $(1-\frac{1}{64})^r\eta$, we have (see l.2 of \cref{alg:centering_lsw})
    \begin{equation*}
        \|\nabla p(\mb x^(k))\|_{\mb Q^{-1}} \leq 2\paren{1-\frac{1}{64}}^r\eta + \paren{1-\frac{1}{64}}^r\eta = 3\paren{1-\frac{1}{64}}^r\eta.
    \end{equation*}
    The same argument as in the original proof shows that at each iteration $t$,
    \begin{equation*}
        \|\mb S_{x^{(0)}}^{-1} (\mb s(\mb x^{(t)})-\mb s(\mb x^{(0)}))\|_2 = \|\mb x^{(t)}-\mb x^{(0)}\|_{\mb A^\top \mb S_{x^{(0)}}^{-2} \mb A} \leq \frac{\|\mb x^{(t)}-\mb x^{(0)}\|_{\mb Q}}{\sqrt{\mu(\mb x^{(0)})+c_e}} \leq \frac{1}{10}.
    \end{equation*}
    This ends the proof of the lemma.
\end{proof}

Because of rounding errors, \cref{lemma:centering_lsw} has an extra factor $3$ compared to the original guarantee in \cite[Lemma 14]{lee2015faster}. To achieve the same guarantee, it suffices to perform $70\geq \ln(3)/\ln(1/(1-\frac{1}{64}))$ additional centering procedures at most. hence, instead of performing $200$ centering procedures during the cutting plane method, we perform $270$ (l.10 of \cref{alg:cutting_planes_lsw}). We next turn to the numerical stability of the main \cref{alg:cutting_planes_lsw}.

\begin{lemma}\label{lemma:numerical_stability}
    Suppose that throughout the algorithm, when checking the stopping criterion $\min_{i\in[m]}s_i(\mb x) <2\epsilon$, the quantities $s_i(\mb x)$ were computed with accuracy $\epsilon$. 
    Suppose that at each iteration of \cref{alg:cutting_planes_lsw}, the leverage scores $\mb \psi(\mb x^{(t)})$ are computed up to multiplicative precision $c_\Delta/4$ (l.3), that when a constraint is added, the response of the oracle $\mb a$ (l.7) is stored perfectly but $b$ (l.8) is computed up to precision $\Omega(\frac{\epsilon}{\sqrt n})$. Further suppose that the centering \cref{alg:centering_lsw} is run with numerical approximations according to the assumptions in \cref{lemma:centering_lsw}. Then, all guarantees for the original algorithm in \cite{lee2015faster} hold, up to a factor $3$ for $\epsilon$.
\end{lemma}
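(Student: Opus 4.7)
The plan is to shadow the original analysis of \cite{lee2015faster} iteration by iteration, tracking at each step how the computational approximations perturb the exact quantities used in that paper, and verifying that the perturbations are small enough to be absorbed into existing constants (at the cost of the stated factor $3$ slack in $\epsilon$). I would introduce, at each iteration $t$, the ``ideal'' iterate $\tilde{\mb x}^{(t)}$ that would have been produced from $\mb x^{(t-1)}$ with exact arithmetic, and bound $\|\tilde{\mb x}^{(t)} - \mb x^{(t)}\|_{\mb Q(\mb x^{(t-1)})}$ using \cref{lemma:centering_lsw}. Once this gap is controlled, I would argue that every quantity used in the LSW analysis (slacks $s_i$, leverage scores $\psi_i$, matrix $\mb Q$, gradient $\nabla p$) is within a small multiplicative factor of its exact counterpart, so the potential function drop per iteration and the convergence rate change by at most constants.

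The main step-by-step work would proceed as follows. First, for the stopping check, observe that if $s_i(\mb x^{(t)})$ is computed with additive accuracy $\epsilon$, then the returned iterate satisfies $\min_i s_i(\mb x^{(t)}) < 3\epsilon$, which is exactly where the extra factor $3$ enters. Second, for the leverage score test, a multiplicative error of $c_\Delta/4$ together with the constraint-dropping threshold $c_d$ still separates constraints with $\psi_i \geq (1+c_\Delta)c_d$ from those with $\psi_i \leq (1-c_\Delta)c_d$, so the constraint dropped in the approximate execution coincides with one that the exact algorithm would also be willing to drop; the potential-drop bound $\Delta V^-$ from LSW's analysis thus still applies verbatim (after shrinking a constant). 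Third, for the added cut $(\mb a,b)$, since $\mb a$ is stored exactly and $b$ is perturbed by $\Ocal(\epsilon/\sqrt n)$, the new slack $s_{m+1}(\mb x^{(t)}) = \mb a^\top \mb x^{(t)} - b$ is perturbed by at most $\Ocal(\epsilon/\sqrt n)$, which is negligible compared to the distances that govern the volumetric-hybrid potential; consequently the $\Delta V^+$ increase is preserved up to a constant. Fourth, the centering step is already handled by \cref{lemma:centering_lsw}, which gives $\|\nabla p(\mb x^{(t)})\|_{\mb Q(\mb x^{(t)})^{-1}} \leq 3(1-1/64)^{270}\eta$, and the extra factor $3$ is exactly why we raised the inner iteration count from $200$ to $270$.

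The hard part will be the third item: certifying that the perturbation in the added constraint, combined with the drift between $\mb x^{(t)}$ and its ``exact'' counterpart $\tilde{\mb x}^{(t)}$, does not accumulate across iterations in a way that would invalidate the global potential argument of LSW. Concretely, the LSW proof relies on the identity that, after adding a cut at a centered point, the new volumetric barrier satisfies a precise lower bound coming from a rank-one update; when the point is slightly off-center and the cut offset is slightly off, I would need to show that the rank-one update perturbation is of order $\Ocal(\epsilon/\sqrt n)\cdot \|(\mb A_x^\top \mb S_x^{-2}\mb A + \lambda \mb I)^{-1/2}\mb a\|$, which via the $c_a^{-1/2}$ scaling in the definition of $b$ is dominated by the LSW ``safety margin'' built into the step. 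The key leverage here is that the regularization $\lambda \mb I$ in $\mb Q$ lower bounds its smallest eigenvalue, so every matrix inverse involved in the analysis has bounded operator norm, and perturbation bounds are uniform across iterations.

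Finally, I would combine these three ingredients: the net decrease in the potential per iteration is at least $\Delta V^+ - \Delta V^- - \Ocal(\epsilon/\sqrt n) > 0$ after appropriate constant tuning, so the total number of iterations remains $\Ocal(d\ln^{\Ocal(1)}(1/\epsilon))$. Together with the factor-$3$ loss at the stopping criterion, this yields the stated guarantee: the approximate execution of \cref{alg:cutting_planes_lsw} produces a polyhedron and iterate $\mb x^{(t)}$ satisfying the same conclusions as in \cite{lee2015faster}, except that any bound of the form ``$\min_i s_i(\mb x^{(t)}) < \epsilon$'' becomes ``$\min_i s_i(\mb x^{(t)}) < 3\epsilon$''.
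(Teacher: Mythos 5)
Your proposal reaches the correct conclusions and covers the same four ingredients as the paper's proof (stopping test, leverage-score test, added-constraint offset $b$, and centering via \cref{lemma:centering_lsw}), but the paper takes a materially shorter route on the third ingredient. Where you propose to re-derive a perturbation bound for the rank-one potential update and then argue that the per-iteration drop $\Delta V^+ - \Delta V^-$ survives an $\Ocal(\epsilon/\sqrt{n})$ perturbation, the paper instead observes that \cite{lee2015faster} already states its algorithm is robust to an inexact separation oracle: the method changes constraints by an amount $\delta = \Omega(\epsilon/\sqrt{d})$, so perturbing $b$ by $\Omega(\epsilon/\sqrt{d})$ is already within the tolerance of their own analysis, and no new potential-function argument is needed. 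Your worry about drift ``accumulating across iterations'' is also unnecessary: the centering procedure is restarted from each outer iterate $\mb x^{(t)}$, so the error control from \cref{lemma:centering_lsw} is applied fresh each time rather than compounding. Similarly, for the leverage-score test, the paper notes that the LSW algorithm is designed from the start to work with a one-sided $c_\Delta$-estimate $\psi_i \leq w_i \leq (1+c_\Delta)\psi_i$, so the computed two-sided $c_\Delta/4$-estimate falls within the original guarantee after redefining $c_d$ as $(1-c_\Delta/4)c_d$; this is cleaner than your separation argument, though both work. In short, your approach would succeed but expends effort re-proving robustness that the original LSW analysis already provides and that the paper simply cites; the cost of your route is that the ``hard part'' you flag is real under your decomposition but avoidable.
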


\begin{proof}
    We start with the termination criterion. Given the requirement on the computational accuracy, we know that the final output $\mb x$ satisfies $\min_{i\in [m]}s_i(\mb x) \leq 3\epsilon$. Further, during the algorithm, if it does not stop, then one has $\min_{i\in [m]}s_i(\mb x) \geq \epsilon$, which is precisely the guarantee of the original algorithm in \cite{lee2015faster}.

    We next turn to the computation of the leverage scores in l.4. In the original algorithm, only a $c_\Delta$-estimate is computed. Precisely, one computes a vector $\mb w^{(t)}$ such that for all $i\in [d]$, $ \psi(\mb x^{(t)})_i \leq  w_i\leq (1+c_\Delta)  \psi(\mb x^{(t)})_i $, then deletes a constraint when $\min_{i\in[m^{(t)}]}  w_i^{(t)}\leq c_d$. In the adapted algorithm, let $\tilde\psi(\mb x^{(t)})_i$ denote the computed leverage scores for $i\in [d]$. By assumption, we have
    \begin{equation*}
        (1-c_\Delta/4)\psi(\mb x^{(t)})_i\leq \tilde\psi(\mb x^{(t)})_i \leq (1+c_\Delta/4) \psi(\mb x^{(t)})_i.
    \end{equation*}
    Up to re-defining the constant $c_d$ as $(1-c_\Delta/4)c_d$, $\tilde {\mb \psi}(\mb  x^{(t)})$ is precisely within the guarantee bounds of the algorithm. For the accuracy on the separation oracle response and the second-term value $b$, \cite{lee2015faster} emphasizes that the algorithm always changes constraints by a $\delta$ amount where $\delta = \Omega(\frac{\epsilon}{\sqrt d})$ so that an inexact separation oracle with accuracy $\Omega(\frac{\epsilon}{\sqrt d})$ suffices. Therefore, storing an $\Omega(\frac{\epsilon}{\sqrt d})$ accuracy of the second term keeps the guarantees of the algorithm. Last, we checked in \cref{lemma:centering_lsw} that the centering procedure \cref{alg:centering_lsw} satisfies all the requirements needed in the original proof \cite{lee2015faster}.
\end{proof}

For our recursive method, we need an efficient cutting-plane method that also provides a proof (certificate) of convergence. This is also provided by \cite{lee2015faster} that provide a proof that the feasible region has small width in one of the directions $\mb a_i$ of the returned polyhedron.

\begin{algorithm}

\LinesNumbered

\caption{Cutting-plane algorithm with certified optimality}
\label{alg:proof_computation}

    \KwIn{$\epsilon>0$ and a separation oracle $O:\Ccal_d\to \Rbb^d$}
    
    Run \cref{alg:cutting_planes_lsw} to obtain a polyhedron encoded in $\Pcal$ and a feasible point $\mb x$\,

    $\mb x^\star = \mathsf{Centering}(\mb x,64\ln\frac{2}{\epsilon},c_\Delta)$\,

    $\lambda_i = \frac{c_e + \psi_i(\mb x^\star)}{s_i(\mb x^\star)}\paren{\sum_j \frac{c_e + \psi_j(\mb x^\star)}{s_j(\mb x^\star)} }^{-1}$ for all $i$\,

    \KwOut{$(\Pcal,\mb x^\star,(\lambda_i)_i)$}
\end{algorithm}

\begin{lemma}{\cite[Lemma 28]{lee2015faster}}
\label{lemma:certified_optimality}
    Let $(\Pcal,\mb x,(\lambda_i)_i)$ be the output of \cref{alg:proof_computation}. Then, $\mb x$ is feasible, $\|\mb x\|_2\leq 3\sqrt d$, $\lambda_j\geq 0$ for all $j$ and $\sum_i\lambda_i=1$. Further,
    \begin{equation*}
        \left\|\sum_i \lambda_i \mb a_i \right\|_2 =\Ocal\paren{\epsilon\sqrt d \ln\frac{d}{\epsilon}}, \quad \text{and} \quad  \sum_i \lambda_i (\mb a_i^\top  \mb x - b_j) \leq \Ocal\paren{d\epsilon \ln\frac{d}{\epsilon}}.
    \end{equation*}
\end{lemma}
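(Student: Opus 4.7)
The plan is to follow the proof of Lemma 28 in \cite{lee2015faster}, which establishes the analogous guarantee for the original cutting-plane method, and verify that it carries over to our simplified deterministic variant together with the numerical-accuracy guarantees obtained in \cref{lemma:centering_lsw,lemma:numerical_stability}.

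The structural claims will be essentially bookkeeping. For feasibility, either the cutting-plane subroutine \cref{alg:cutting_planes_lsw} exited through the stopping criterion---in which case $\mb x\in\Pcal$ with all slacks positive---or it ran to completion, and then \cref{lemma:centering_lsw} guarantees $\mb S_{x^{(0)}}^{-1}(\mb s(\mb x^{(t)})-\mb s(\mb x^{(0)}))$ has norm $\leq \tfrac{1}{10}$ along every Centering iterate, so $\mb x^\star$ still lies in $\Pcal$. The norm bound $\|\mb x^\star\|_2\leq 3\sqrt{d}$ will follow from the $\tfrac{\lambda}{2}\|\mb x\|_2^2$ term inside $p$, combined with the near-monotone decrease of $p$ along centering started at $\mb x^{(0)}=\mb 0$. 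Nonnegativity $\lambda_j\geq 0$ and $\sum_i \lambda_i=1$ are immediate from the definition since $c_e>0$, $\psi_i\geq 0$, and $s_i(\mb x^\star)>0$.

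Both quantitative estimates will reduce to approximate first-order optimality at $\mb x^\star$. With $r=64\ln(2/\epsilon)$ centering iterations and $\eta=\tfrac{1}{100}\sqrt{c_e+\mu(\mb x^{(0)})}$, \cref{lemma:centering_lsw} yields
\begin{equation*}
    \|\nabla p(\mb x^\star)\|_{\mb Q(\mb x^\star)^{-1}} = \Ocal\paren{\epsilon\sqrt{c_e+1}}.
\end{equation*}
Using $\nabla p(\mb x)=-\mb A_x^\top(c_e\mb 1+\psi(\mb x))+\lambda\mb x$ and the identity $Z\sum_i\lambda_i\mb a_i = \mb A_{x^\star}^\top(c_e\mb 1+\psi(\mb x^\star))$ with $Z=\sum_j(c_e+\psi_j(\mb x^\star))/s_j(\mb x^\star)$, and invoking $\mb Q(\mb x^\star)^{-1}\preceq \lambda^{-1}\mb I$, rearranging gives
\begin{equation*}
    \norm{\sum_i \lambda_i \mb a_i}_2 \leq \frac{\lambda\|\mb x^\star\|_2 + \Ocal(\epsilon/\sqrt{\lambda})}{Z}.
\end{equation*}
For the slackness sum, the algebraic identity $\lambda_i s_i(\mb x^\star)=(c_e+\psi_i(\mb x^\star))/Z$ will collapse it to $(mc_e+d)/Z$, where $m=\Ocal(d)$ by the analogue of \cref{lemma:nb_constraints} for the modified barrier (low-leverage constraints are dropped once $\psi_i\leq c_d$). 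Both estimates thus boil down to a lower bound of the form $Z=\Omega(1/(\epsilon\ln(d/\epsilon)))$.

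The main obstacle will be establishing this lower bound on $Z$ and propagating the inexact-arithmetic allowances cleanly through the analysis. The lower bound is the width estimate at the heart of \cite{lee2015faster}: the final polyhedron has width $\Ocal(\epsilon\ln(d/\epsilon))$ in some direction, so at least one slack $s_i(\mb x^\star)$ is of that order, and combined with $\psi_i\geq c_d=\Omega(1)$ for every surviving constraint this contributes a single term of size $\Omega(1/(\epsilon\ln(d/\epsilon)))$ to the sum defining $Z$. Tracking how the error budgets of \cref{lemma:centering_lsw,lemma:numerical_stability} enter each constant will only modify universal multiplicative factors and leave the stated $\epsilon$- and $d$-dependence intact.
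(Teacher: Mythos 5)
The paper does not prove this lemma; it cites it verbatim as \cite[Lemma~28]{lee2015faster}. The paper's strategy is to first argue (just before \cref{lemma:centering_lsw}) that \cref{alg:cutting_planes_lsw} in exact arithmetic coincides with the algorithm of \cite{lee2015faster} when leverage-score estimates are recomputed exactly at every step, so the original proof of Lemma~28 applies wholesale; numerical stability under bounded-precision arithmetic is then addressed separately in \cref{lemma:centering_lsw,lemma:numerical_stability} and folded into \cref{prop:memory_computations}. Your proposal instead tries to reprove Lemma~28 from scratch with the inexact-arithmetic allowances built in. That is a reasonable alternative route, but it is more work than the paper needs, and two of your steps have gaps.

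First, the matrix inequality is pointed the wrong way: from $\lambda\mb I\preceq \mb Q(\mb x^\star)$ one gets $\mb Q(\mb x^\star)^{-1}\preceq\lambda^{-1}\mb I$, which yields $\|\mb v\|_2\geq \sqrt\lambda\,\|\mb v\|_{\mb Q(\mb x^\star)^{-1}}$, a \emph{lower} bound on the Euclidean norm. To pass from $\|\nabla p(\mb x^\star)\|_{\mb Q^{-1}}=\Ocal(\epsilon\sqrt{c_e+1})$ to an \emph{upper} bound on $\|\nabla p(\mb x^\star)\|_2$ you would need control of the largest eigenvalue of $\mb Q(\mb x^\star)$, which the paper only bounds by $\lambda+m(c_e+1)/\epsilon^2$ (proof of \cref{prop:memory_computations}); that is far from giving $\|\nabla p(\mb x^\star)\|_2=\Ocal(\epsilon/\sqrt\lambda)$ as your display claims, so the stated bound on $\|\sum_i\lambda_i\mb a_i\|_2$ does not follow from the reasoning given. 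Second, the lower bound on $Z$ is argued by saying a small-width polyhedron forces a small slack at $\mb x^\star$; width and pointwise slack are not equivalent, and the correct source of the small-slack constraint here is the explicit termination criterion $s_i(\mb x^{(t)})<2\epsilon$ of \cref{alg:cutting_planes_lsw}, tracked through the final centering step. Both gaps are fixable, but as written they would not yield the stated $\Ocal(\epsilon\sqrt d\ln(d/\epsilon))$ and $\Ocal(d\epsilon\ln(d/\epsilon))$ bounds, which is why the paper defers to \cite{lee2015faster} rather than reproving the estimate.
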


We are now ready to show that \cref{alg:cutting_planes_lsw} can be implemented with efficient memory and also provides a proof of the convergence of the algorithm.

\begin{proposition}\label{prop:memory_computations}
    Provided that the output of the oracle are vectors discretized to precision $\text{poly}(\frac{\epsilon}{d})$ and have norm at most $1$, \cref{alg:proof_computation} can be implemented with $\Ocal(d^2\ln\frac{d}{\epsilon})$ bits of memory to output a certified optimal point according to \cref{lemma:certified_optimality}. The algorithm performs $\Ocal(d\ln\frac{d}{\epsilon})$ calls to the separation oracle and runs in $\Ocal(d^{1+\omega}\ln^{\Ocal(1)}\frac{d}{\epsilon})$ time.
\end{proposition}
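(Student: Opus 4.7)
The plan is to build the memory bound from the outside in: first establish what must persist across iterations, then argue each transient computation can be done with only a constant-factor memory overhead and at the required precision tolerated by \cref{lemma:centering_lsw,lemma:numerical_stability}. Between iterations, the algorithm only needs to retain: the polyhedron $\Pcal_t$, the current iterate $\mb x^{(t)}$, and a small number of scalar bookkeeping variables (iteration counter, termination flags). By \cref{lemma:nb_constraints} (which carries over to this variant since the leverage-score cutoff $c_d$ plays the same role as $\sigma_{\min}$), the number of active constraints is $\Ocal(d)$ at all times. Each constraint is a pair $(\mb a_i, b_i)$ with $\mb a_i$ of unit norm up to the oracle's $\text{poly}(\epsilon/d)$ discretization and $b_i\in[-\Ocal(\sqrt d), \Ocal(\sqrt d)]$, so each needs $\Ocal(d\ln \frac{d}{\epsilon})$ bits, giving $\Ocal(d^2\ln\frac{d}{\epsilon})$ bits for the polyhedron. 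The iterate $\mb x^{(t)}$ needs $\Ocal(d\ln\frac{d}{\epsilon})$ bits, absorbed in the same bound.

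Next, I would argue that each transient quantity used inside a centering step or cut addition fits in $\Ocal(d^2\ln\frac{d}{\epsilon})$ scratch memory. The main objects are the $m\times d$ matrix $\mb A_x$, the $d\times d$ matrices $\mb H(\mb x) = \mb A_x^\top\mb A_x + \lambda \mb I$ and $\mb Q(\mb x) = \mb A_x^\top(c_e \mb I + \mb \Psi(\mb x))\mb A_x + \lambda \mb I$, and $\mb Q(\mb x)^{-1}$. The regularization $\lambda \mb I$ is the key: it lower-bounds the eigenvalues of $\mb Q(\mb x)$ by $\lambda$, so its condition number stays $\text{poly}(d/\epsilon)$ throughout (using the upper bound $\|\mb a_i\|\leq 1$ and the invariant $s_i(\mb x^{(t)})\geq \epsilon$ guaranteed by the termination check). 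Thus $\mb Q(\mb x)^{-1}$ can be computed by Gaussian elimination with $\Ocal(\ln\frac{d}{\epsilon})$-bit arithmetic, producing a relative error of $\text{poly}(\epsilon/d)$ which is good enough to make $\mb \psi(\mb x)_i$ multiplicatively accurate to $c_\Delta/4$ and $\|\nabla p(\mb x^{(k-1)})\|_{\mb Q^{-1}}$ accurate to $(1-1/64)^r\eta$, matching the hypotheses of \cref{lemma:centering_lsw}. The Newton step itself requires only one matrix-vector solve, whose output has the error bound demanded by \cref{lemma:centering_lsw}. Computing the threshold $b = \mb a^\top \mb x^{(t)} - c_a^{-1/2}\sqrt{\mb a^\top(\mb A^\top \mb S_x^{-2}\mb A + \lambda \mb I)^{-1}\mb a}$ needs one more linear solve plus a square root, again to precision $\Omega(\epsilon/\sqrt d)$, which is exactly the input hypothesis of \cref{lemma:numerical_stability}.

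Given the above precision guarantees, \cref{lemma:numerical_stability} ensures that every invariant of \cite{lee2015faster} is preserved up to constant factors in $\epsilon$, and \cref{lemma:certified_optimality} (whose proof uses only the terminating invariants established by the centered optimum) then applies to the output of \cref{alg:proof_computation}. In particular the final centering step of $64\ln(2/\epsilon)$ Newton iterations (l.2) is itself an instance of \cref{alg:centering_lsw} and is handled by the same memory analysis. The total oracle complexity is the number of outer iterations of \cref{alg:cutting_planes_lsw} that add a constraint, which is $\Ocal(d\ln\frac{d}{\epsilon})$ by the potential argument of \cite{lee2015faster}, unchanged under the numerical-stability adaptation. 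For runtime, each outer iteration performs $\Ocal(\ln\frac{d}{\epsilon})$ centering steps, each dominated by one $d\times d$ linear solve and one leverage-score recomputation at cost $\Ocal(d^\omega)$, giving $\Ocal(d^{1+\omega}\ln^{\Ocal(1)}\frac{d}{\epsilon})$ total.

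The main obstacle I anticipate is bounding the condition numbers of $\mb Q(\mb x^{(t)})$ and $\mb A^\top \mb S_{x^{(t)}}^{-2}\mb A + \lambda \mb I$ uniformly along the trajectory, so that $\text{poly}(d/\epsilon)$-precision arithmetic propagates into the $(c_\Delta/4)$-multiplicative leverage-score accuracy demanded by \cref{lemma:numerical_stability}. This requires combining: (i) the lower bound $s_i(\mb x^{(t)})\geq \epsilon$ enforced by the termination check, (ii) the upper bound $\|\mb x^{(t)}\|\leq 3\sqrt d$ ensured by the $\lambda\|\mb x\|^2$ regularizer in $p(\mb x)$, and (iii) the bound $\|\mb a_i\|\leq 1$. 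Together they pin the eigenvalues of $\mb A_x^\top \mb A_x$ into $[0, \text{poly}(d/\epsilon)]$ and of $\mb Q(\mb x)$ into $[\lambda,\text{poly}(d/\epsilon)]$, yielding $\text{poly}(d/\epsilon)$ condition number and validating the choice of $\Ocal(\ln\frac{d}{\epsilon})$ bits per scalar throughout.
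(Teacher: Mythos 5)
Your proposal follows essentially the same route as the paper's proof: bound the persistent state by $\Ocal(d^2\ln\frac{d}{\epsilon})$ using the $\Ocal(d)$ bound on the number of constraints and the $\text{poly}(\epsilon/d)$ discretization of each, bound the iterate via $\|\mb x^{(t)}\|=\Ocal(\sqrt d)$, and then argue that the scratch computations are well-conditioned because the $\lambda\mb I$ regularizer and the invariant $s_i(\mb x^{(t)})\geq\epsilon$ pin the spectrum of $\mb Q(\mb x)$ into $[\lambda,\,\text{poly}(d/\epsilon)]$, so that $\Ocal(\ln\frac{d}{\epsilon})$-bit arithmetic reaches the $(c_\Delta/4)$-multiplicative and $(1-1/64)^r\eta$-additive tolerances required by \cref{lemma:centering_lsw,lemma:numerical_stability}. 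Two small nits: the bound on the number of active constraints in \cref{alg:cutting_planes_lsw} should cite \cite[Lemma 22]{lee2015faster} rather than \cref{lemma:nb_constraints}, which is stated for the original Vaidya method of \cref{alg:vaidya} (your remark that it "carries over" is morally right but the paper is explicit); and the inner loop of \cref{alg:cutting_planes_lsw} runs a fixed 270 centering steps, so the per-iteration centering count is $\Ocal(1)$, not $\Ocal(\ln\frac{d}{\epsilon})$ (only the final call in \cref{alg:proof_computation} uses $\Ocal(\ln\frac{1}{\epsilon})$ steps); the stated $\Ocal(d^{1+\omega}\ln^{\Ocal(1)}\frac{d}{\epsilon})$ runtime is unaffected.
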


\begin{proof}
    We already checked the numerical stability of \cref{alg:cutting_planes_lsw} in \cref{lemma:numerical_stability}. It remains to check the next steps of the algorithm. The centering procedure is stable again via \cref{lemma:centering_lsw}. It also suffices to compute the coefficients $\lambda_j$ up to accuracy $\Ocal(\epsilon/(\sqrt d) \ln(d/\epsilon))$ to keep the guarantees desired since by construction all vectors $\mb a_i$ have norm at most one. 

    It now remains to show that the algorithm can be implemented with efficient memory. We recall that at any point during the algorithm, the polyhedron $\Pcal$ has at most $\Ocal(d)$ constraints \cite[Lemma 22]{lee2015faster}. Hence, since we assumed that each vector $\mb a_i$ composing a constraint is discretized to precision $\text{poly}(\frac{\epsilon}{d})$, we can store the polyhedron constraints with $\Ocal(d^2\ln \frac{d}{\epsilon})$ bits of memory. The second terms $b$ are computed up to precision $\Omega(\epsilon/\sqrt d)$ hence only use $\Ocal(d\ln\frac{d}{\epsilon})$ bits of memory. The algorithm also keeps the current iterate $x^{(t)}$ in memory. These are all bounded throughout the memory $\|x^{(t)}\|_2 =\Ocal(\sqrt d)$ \cite[Lemma 23]{lee2015faster}, hence only require $\Ocal(d\ln\frac{d}{\epsilon})$ bits of memory for the desired accuracy. 
    
    Next, the distances to the constraints are bounded at any step of the algorithm: $s_i(\mb x^{(t)}) \leq \Ocal(\sqrt d)$ \cite[Lemma 24]{lee2015faster}, hence computing $s_i(\mb x^{(t)})$ to the required accuracy is memory-efficient. Recall that from the termination criterion, except for the last point, any point $\mb x$ during the algorithm satisfies $s_i(\mb x) \geq \epsilon$ for all constraints $i\in [m]$. In particular, this bounds the eigenvalues of $\mb Q$ since $\lambda I\preceq \mb Q(x)\preceq (\lambda + m(c_e+1)/\epsilon^2)\mb I$. Thus, the matrix is sufficiently well-conditioned to achieve the accuracy guarantees from \cref{lemma:centering_lsw} using $\Ocal(d^2\ln\frac{d}{\epsilon})$ memory during matrix inversions (and matrix multiplications). Similarly, for the computation of leverage scores, we use $\mb \Psi(x) = diag(\mb A_x(\mb A_x^\top \mb A_x+\lambda \mb I)^{-1} \mb A_x^\top)$, where $\lambda \mb I \preceq \mb A_x^\top \mb A_x+\lambda \mb I \preceq (\lambda + m\epsilon^{-2})\mb I$. This same matrix inversion appears when computing the second term of an added constraint. Overall, all linear algebra operations are well conditioned and implementable with required accuracy with $\Ocal(d^2\ln\frac{d}{\epsilon})$ memory. Using fast matrix multiplication, all these operations can be performed in $\tilde \Ocal(d^\omega)$ time per iteration of the cutting-plane algorithm since these methods are also known to be numerically stable \cite{demmel2007fast}. Thus, the total time complexity is $\Ocal(d^{1+\omega}\ln^{O(1)}\frac{d}{\epsilon})$. The oracle-complexity still has optimal $\Ocal(d\ln\frac{d}{\epsilon})$ oracle-complexity as in the original algorithm.
\end{proof}

Up to changing $\epsilon$ to $c\cdot \epsilon/(d\ln \frac{d}{\epsilon})$, the described algorithm finds constraints given by $\mb a_i$ and $b_i$, $i\in[m]$ returned by the normalized separation oracle, coefficients $\lambda_i$, $i\in [m]$, and a feasible point $\mb x^\star$ such that for any vector in the unit cube, $\mb z\in \Ccal_d$, one has
\begin{align*}
    \min_{i\in [m]} \mb a_i^\top \mb  z - b_i \leq \sum_{i\in [m]}\lambda_i (\mb a_i^\top \mb z-b_i) \leq \paren{\sum_{i\in [m]}\lambda \mb a_i}^\top (\mb x^\star -\mb z) + \sum_{i\in [m]}\lambda_i (\mb a_i^\top \mb x^\star - b_i) \leq \epsilon.
\end{align*}
This effectively replaces \cref{lemma:vaidya_optimality}.

\subsection{Merging \cref{alg:proof_computation} within the recursive algorithm}

\cref{alg:grad_estimation,alg:recursive_grad,alg:final} from the recursive procedure need to be slightly adapted to the new format of the cutting-plane method's output. In particular, the oracles do not take as input polyhedrons (and eventually query their volumetric center as before), but directly take as input an point (which is an approximate volumetric center).

\begin{algorithm}[ht]
\LinesNumbered

\caption{$\mathsf{ApproxSeparationVector}_{\delta,\xi}(O_x,O_y)$ } \label{alg:grad_estimation_new}
\KwIn{$\delta$, $\xi$, $O_{x}:\Ccal_n\to \Rbb^m$ and $O_{y}:\Ccal_n\to\Rbb^n$}

Run \cref{alg:proof_computation} with parameter $c\cdot \delta/(d\ln\frac{d}{\delta}),\xi$ and $O_{y}$ to obtain $(\Pcal^\star,\mb x^\star,\mb\lambda)$\,

Store $\mb k^\star=(k_i,i\in[m])$ where $m=|\Pcal^\star|$, and $\mb\lambda^\star \gets \mathsf{Discretize}(\mb\lambda^\star,\xi)$\,  

Initialize $\Pcal_0 := \{(-1, \mb e_i,-1),(-1 -\mb e_i,-1),~i\in[d]\}$, $\mb x^{(0)}=\mb 0$ and let $\mb u = \mb 0\in \R^m$\,

\For{$t = 0,1,\ldots,\max_i k_i$}{
    
    \uIf{$t = k_i^\star$ for some $i\in[m]$}{
        $ \mb g_{x} =  O_x(\mb x^{(t)})$\,

        $\mb u \leftarrow \mathsf{Discretize}_m(\mb u + \lambda^\star_i \mb g_{x},\xi)$\,
    }
    Update $\Pcal_t$ to get $\Pcal_{t+1}$, and $\mb x^{(t)}$ to get $\mb x^{(t+1)}$ as in \cref{alg:cutting_planes_lsw}\,
    }
\Return{$\mb u$}
\end{algorithm}

\begin{algorithm}[ht]
\LinesNumbered

\caption{$\mathsf{ApproxOracle}_{\delta,\xi,O_S}(i,j,\mb x^{(1)},\ldots,\mb x^{(i)})$}\label{alg:recursive_grad_new}
\KwIn{$\delta$, $\xi$, $1\leq j\leq i\leq p$, $\mb x^{(r)}\in \Ccal_{k_r}$ for $r \in[i]$, $O_S:\Ccal_d \to \R^d$}

\If{$i = p$}{
    
    $(\mb g_1,\ldots,\mb g_p) = O_S (\mb x_1,\ldots,\mb x_p)$\,
    
    \Return{$\mathsf{Discretize}_{k_j}(\mb g_j,\xi)$}
}
Define $O_x:\Ccal_{k_{i+1}}\to\R^{k_j}$ as $ \mathsf{ApproxOracle}_{\delta,\xi,\Ocal_f}(i+1,j,\mb x^{(1)}, ,\ldots,\mb x^{(i)},\cdot)$\,

Define $O_y:\Ccal_{k_{i+1}}\to\R^{k_{i+1}}$ as $\mathsf{ApproxOracle}_{\delta,\xi,\Ocal_f}(i+1,i+1,\mb x^{(1)} ,\ldots,\mb x^{(i)},\cdot)$\,

\Return{$\mathsf{ApproxSeparationVector}_{\delta,\xi}(O_x,O_y)$}
\end{algorithm}

\begin{algorithm}[ht]
\LinesNumbered

\caption{Memory-constrained algorithm for convex optimization}\label{alg:final_new}
\KwIn{$\delta$, $\xi$, and $\Ocal_S:\Ccal_d\to\R^d$ a separation oracle}

\Check{Throughout the algorithm, if $O_S$ returned $\mathsf{Success}$ to a query $\mb x$, \Return $\mb x$}

Run \cref{alg:cutting_planes_lsw} with parameters $\delta$ and $\xi$ and oracle $\mathsf{ApproxOracle}_{\delta,\xi,O_S}(1,1,\cdot)$\,

\end{algorithm}

The same proof as for \cref{alg:final} shows that \cref{alg:final_new} run with the parameters in \cref{thm:correctness_recursive_alg} also outputs a successful vector using the same oracle-complexity. We only need to analyze the memory usage in more detail.

\begin{proof}[Proof of \cref{thm:main}]
    As mentioned above, we will check that \cref{alg:final_new} with the same parameters $\delta=\frac{\epsilon}{4d}$ and $\xi = \frac{\sigma_{min}\epsilon}{32d^{5/2}}$ as in \cref{thm:correctness_recursive_alg} satisfies the desired requirements. We have already checked its correctness and oracle-complexity. Using the same arguments, the computational complexity is of the form $\Ocal(\Ocal(\text{ComplexityCuttingPlanes})^p)$ where $\text{ComplexityCuttingPlanes}$ is the computational complexity of the cutting-plane method used, i.e., here of \cref{alg:proof_computation}. Hence, the computational complexity is $\Ocal((C(d/p)^{1+\omega}\ln^{\Ocal(1)}\frac{d}{\epsilon})^p)$ for some universal constant $C\geq 2$. We now turn to the memory. In addition to the memory of \cref{alg:final}, described in \cref{tab:memory_format}, we need
    \begin{enumerate}
        \item a placement for all $i\in [p]$ for the current iterate $\mb x^{(i)}$: $\Ocal(k_i\ln\frac{1}{\xi})$ bits,
        \item a placement for computations, that is shared for all layers (used to compute leverage scores, centering procedures, etc. By \cref{prop:memory_computations}, since the vectors are always discretized to precision $\xi$, this requires $\Ocal(\max_{i\in[p]} k_i^2 \ln\frac{d}{\epsilon})$ bits,
        \item the placement $Q$ to perform queries is the concatenation of the placements $(\mb x^{(1)},\ldots,\mb x^{(p)})$: no additional bits needed.
        \item a placement $N$ to store the precision needed for the oracle responses: $\Ocal(\ln\frac{1}{\xi})$ bits
        \item a placement $R$ to receive the oracle responses: $\Ocal(d\ln\frac{1}{\xi})$ bits.
    \end{enumerate}
    The new memory structure is summarized in \cref{tab:memory_format_with_R}.

    With the same arguments as in the original proof of \cref{thm:correctness_recursive_alg}, this memory is sufficient to run the algorithm and perform computations, thanks to the computation placement. The total number of bits used throughout the algorithm remains the same, $\Ocal(\frac{d^2}{p}\ln\frac{d}{\epsilon})$. This ends the proof of the theorem.
\end{proof}

\begin{table}[ht]
    \centering
    \renewcommand{\arraystretch}{1.5}
    \begin{tabular}{r|c|c|c| c| c|c|c|}
    
    \multicolumn{1}{r}{\begin{tabular}{c}$i$\end{tabular}} & \multicolumn{1}{c}{1} & \multicolumn{1}{c}{$\ldots$} & \multicolumn{1}{c}{$p$} &\multicolumn{1}{c}{} & \multicolumn{1}{c}{Oracle response} & \multicolumn{1}{c}{} &\multicolumn{1}{c}{Precision}\\

     \cline{2-4} \cline{6-6} \cline{8-8}
     
     \begin{tabular}{l}$j$\end{tabular}   & $j^{(1)}$ & & $j^{(p)}$ &  & $R=(R_1,\ldots,R_p)$ & & $N$\\
     
     \cline{2-4} \cline{6-6} \cline{8-8}
     
      \begin{tabular}{l}Iteration index\end{tabular}   & $t^{(1)}$ & & $t^{(p)}$ &\multicolumn{1}{c}{}&  \multicolumn{1}{c}{} &\multicolumn{1}{c}{}&  \multicolumn{1}{c}{} \\
      
      \cline{2-4}  \cline{6-6}
      
      \begin{tabular}{l} Polyhedron \end{tabular}   &  $\Pcal^{(1)}= \paren{
      \renewcommand{\arraystretch}{1}
      \begin{matrix}
      k_1,\mb a_1,b_1\\
      k_2, \mb a_2,b_2\\
      \ldots\\
      k_m,\mb a_m,b_m
      \end{matrix}}
      $
      & & $\Pcal^{(p)}$  & & \multirow{6}{*}{
      \renewcommand{\arraystretch}{1}
      \begin{tabular}{c}Computation\\memory\end{tabular}
      }
      &\multicolumn{1}{c}{}&  \multicolumn{1}{c}{}\\
      
      \cline{2-4}
      
      \renewcommand{\arraystretch}{1}
      \begin{tabular}{r}Current \\iterate\end{tabular} & $\mb x^{(1)}$ & & $\mb x^{(p)}$  & & &\multicolumn{1}{c}{}&  \multicolumn{1}{c}{}\\
      
      \cline{2-4}
      
      \renewcommand{\arraystretch}{1}
       \begin{tabular}{r}Computed \\dual variables\end{tabular} & $(\mb k^\star,\mb\lambda^\star) = \paren{
       \renewcommand{\arraystretch}{1}
      \begin{matrix}
      k_1^\star,\lambda_1^\star\\
      k_2^\star, \lambda_2^\star\\
      \ldots
      \end{matrix}}$ && $({\mb k^\star}^{(p)},{\mb \lambda^\star}^{(p)})$  & & &\multicolumn{1}{c}{}&  \multicolumn{1}{c}{}\\
      
      \cline{2-4}
      
      \renewcommand{\arraystretch}{1}
      \begin{tabular}{r}Working \\separation vector\end{tabular} & $\mb u^{(1)}$ & & $\mb u^{(p)}$ & & &\multicolumn{1}{c}{}&  \multicolumn{1}{c}{}\\
      
      \cline{2-4}  \cline{6-6}
      
    \end{tabular}
    \renewcommand{\arraystretch}{1}
    \caption{Memory structure for \cref{alg:final_new}}
    \label{tab:memory_format_with_R}
\end{table}

\section{Improved oracle-complexity/memory lower-bound trade-offs}
\label{section:improved_lower_bounds}

We recall the three oracle-complexity/memory lower-bound trade-offs known in the literature.
\begin{enumerate}
    \item First, \cite{marsden2022efficient} showed that any (including randomized) algorithm for convex optimization uses $d^{1.25-\delta}$ memory or makes $\tilde\Omega(d^{1+4\delta/3})$ queries.
    \item Then, \cite{blanchard2023quadratic} showed that any deterministic algorithm for convex optimization uses $d^{2-\delta}$ memory or makes $\tilde\Omega(d^{1+\delta/3})$ queries.
    \item Last, \cite{blanchard2023quadratic} show that any deterministic algorithm for the feasibility problem uses $d^{2-\delta}$ memory or makes $\tilde\Omega(d^{1+\delta})$ queries.
\end{enumerate}

Although these papers mainly focused on the regime $\epsilon=1/\text{poly}(d)$ and as a result $\ln\frac{1}{\epsilon}=\Ocal(\ln d)$, neither of these lower bounds have an explicit dependence in $\epsilon$. This can lead to sub-optimal lower bounds whenever $\ln\frac{1}{\epsilon}\gg \ln d$. Furthermore, in the exponential regime $\epsilon\leq \frac{1}{2^{\Ocal(d)}}$, these results do not effectively give useful lower bounds. Indeed, in this regime, one has $d^2=\Ocal(d\ln\frac{1}{\epsilon})$ and as a result, the lower bounds provided are weaker than the classical $\Omega(d\ln\frac{1}{\epsilon})$ lower bounds for oracle-complexity \cite{Nemirovsky1983} and memory \cite{woodworth2019open}. In particular, in this exponential regime, these results fail to show that there is any trade-off between oracle-complexity and memory.

In this section, we aim to explicit the dependence in $\epsilon$ of these lower-bounds. We show with simple modifications and additional arguments that one can roughly multiply these oracle-complexity and memory lower bounds by a factor $\ln\frac{1}{\epsilon}$ each. We split the proofs in two. First we give arguments to improve the memory dependence by a factor $\ln\frac{1}{\epsilon}$, which is achieved by modifying the sampling of the rows of the matrix $\mb A$ defining a wall term common to the functions considered in the lower bound proofs \cite{marsden2022efficient,blanchard2023quadratic}. Then we show how to improve the oracle-complexity dependence by an additional $\ln\frac{1}{\epsilon}/\ln d$ factor, via a standard rescaling argument.

\subsection{Improving the memory lower bound}

\comment{

We start with a simple lemma.

\begin{lemma}
    Let $k<d$ and $\mb x_1,\ldots,\mb x_k$ be $k$ orthonormal vectors, and $\zeta\leq 1$. Then,
    \begin{equation*}
        \Pbb_{\mb y\sim\Ncal(0,Id)} \paren{|\mb x_i^\top \mb y|\leq \zeta,\, i\in[k]} \leq \zeta^k.
    \end{equation*}
\end{lemma}

\begin{proof}
    By isometry of the gaussian $\Ncal(0,Id)$, the random variables $(\mb x_i^\top \mb y)_{i\in[k]}$ are all independent normal $\Ncal(0,1)$ random varibles. Thus,
    \begin{equation*}
        \Pbb_{\mb y\sim\Ncal(0,Id)} \paren{|\mb x_i^\top \mb y|\leq \zeta,\, i\in[k]} = \Pbb_{y\sim\Ncal(0,1)}(|y|\leq \zeta)^k \leq \paren{\frac{2\zeta}{\sqrt{2\pi}}}^k\leq \zeta^k
    \end{equation*}
\end{proof}

}

We start with some concentration results on random vectors. \cite{marsden2022efficient} gave the following result for random vectors in the hypercube. 

\begin{lemma}[\cite{marsden2022efficient}]
\label{lemma:sensitive_base_marsden}
    Let $\mb h\sim\Ucal(\{\pm 1\}^d)$. Then, for any $t\in(0,1/2]$ and any matrix $\mb Z=[\mb z_1,\ldots,\mb z_k]\in \Rbb^{d\times k}$ with orthonormal columns,
    \begin{equation*}
        \Pbb(\|\mb Z^\top \mb h\|_\infty \leq t)\leq 2^{-c_H k}.
    \end{equation*}
\end{lemma}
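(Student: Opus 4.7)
The plan is to reduce the infinity-norm event to a lower tail bound on $\|\mb Z^\top \mb h\|_2^2$, and then apply Hanson--Wright-type concentration for a Rademacher quadratic form. First, observe that if $\|\mb Z^\top \mb h\|_\infty \leq t \leq 1/2$, then $\|\mb Z^\top \mb h\|_2^2 \leq k t^2 \leq k/4$, since it is a sum of $k$ coordinates each bounded by $1/2$ in absolute value. So it is enough to upper bound $\Pbb\bigl(\|\mb Z^\top \mb h\|_2^2 \leq k/4\bigr)$.

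Next, since the columns of $\mb Z$ are orthonormal, $\mb P := \mb Z \mb Z^\top$ is the orthogonal projection onto the $k$-dimensional column span of $\mb Z$. I would then write $\|\mb Z^\top \mb h\|_2^2 = \mb h^\top \mb P \mb h$, and use $h_i^2 = 1$ almost surely to isolate the diagonal contribution $\operatorname{tr}(\mb P) = k$, giving
$$\mb h^\top \mb P \mb h \;=\; k \;+\; 2\sum_{i<j} P_{ij}\, h_i h_j,$$
a centered Rademacher chaos of order two. The Hanson--Wright inequality for sub-Gaussian vectors (applicable since Rademacher variables have sub-Gaussian norm $1$) then yields
$$\Pbb\bigl(|\mb h^\top \mb P \mb h - k| \geq s\bigr) \;\leq\; 2 \exp\!\Bigl(-c\min\!\Bigl(\tfrac{s^2}{\|\mb P\|_F^2},\; \tfrac{s}{\|\mb P\|_{\mathrm{op}}}\Bigr)\Bigr).$$
Because $\mb P$ is a rank-$k$ projection, $\|\mb P\|_F^2 = \operatorname{tr}(\mb P^2) = \operatorname{tr}(\mb P) = k$ and $\|\mb P\|_{\mathrm{op}} = 1$; taking $s = 3k/4$ yields probability at most $2 \exp(-c' k)$, which gives the stated bound $2^{-c_H k}$ after absorbing constants (and noting that the finitely many small values of $k$ can be handled directly using that $\Pbb(|\mb z_1^\top \mb h| \leq 1/2)$ is strictly below $1$ for any unit vector $\mb z_1$).

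The only substantial obstacle is the last step: invoking Hanson--Wright with sharp enough constants for Rademacher inputs. One option is to cite the Rudelson--Vershynin version directly. A self-contained alternative is to use the standard decoupling identity to replace $\sum_{i<j} P_{ij} h_i h_j$ by $\sum_{i,j} P_{ij} h_i h_j'$ with an independent Rademacher copy $\mb h'$ (losing only an absolute constant), condition on $\mb h'$ to obtain a linear Rademacher sum with conditional variance $\mb h'^\top \mb P^2 \mb h' = \mb h'^\top \mb P \mb h'$, apply Hoeffding's MGF bound conditionally, and then close the loop by iterating the same tail argument on the outer expectation over $\mb h'$. In all routes the structural input that drives the exponential-in-$k$ decay is the pair of identities $\|\mb P\|_F^2 = k$ and $\|\mb P\|_{\mathrm{op}} = 1$, which encode exactly the fact that $\mb P$ is a rank-$k$ projection.
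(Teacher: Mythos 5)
This lemma is stated in the paper as a citation to \cite{marsden2022efficient} with no proof given, so there is no in-paper argument to compare against; the assessment below is therefore of the proposal on its own merits, and of how it relates to the original source's argument.

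Your plan is correct. The reduction $\|\mb Z^\top\mb h\|_\infty\leq 1/2 \Rightarrow \mb h^\top\mb P\mb h\leq k/4$ with $\mb P=\mb Z\mb Z^\top$ is sound, the identity $\mb h^\top\mb P\mb h=k+2\sum_{i<j}P_{ij}h_ih_j$ (using $h_i^2=1$ and $\operatorname{tr}\mb P=k$) is right, and the key structural facts $\|\mb P\|_F^2=k$, $\|\mb P\|_{\mathrm{op}}=1$ are exactly what make Hanson--Wright give an exponent $\Theta(k)$ at deviation $s=3k/4$. The one place that needs to be tightened is the passage from $2e^{-c'k}$ to $2^{-c_Hk}$: the prefactor $2$ is not harmless for small $k$, and the fallback you sketch (``$\Pbb(|\mb z_1^\top\mb h|\leq 1/2)<1$'') needs to be a \emph{uniform} bound over unit vectors $\mb z_1$ to produce a universal constant. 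That uniform bound does hold---Paley--Zygmund applied to $X=(\mb z_1^\top\mb h)^2$ with $\Ebb X=1$ and $\Ebb X^2\leq 3$ (Khintchine's fourth-moment bound) gives $\Pbb(|\mb z_1^\top\mb h|\leq 1/2)\leq 13/16$ for every unit $\mb z_1$, and since $\{\|\mb Z^\top\mb h\|_\infty\leq 1/2\}\subseteq\{|\mb z_1^\top\mb h|\leq 1/2\}$ this single-coordinate bound covers all $k$ below a threshold $K_0=\Ocal(1)$, after which Hanson--Wright dominates the factor of $2$. With this added, the argument is complete. (The decoupling/conditioning route you offer as a self-contained alternative is the skeleton of the standard Hanson--Wright proof; it is valid but not fully closed as written, since the ``iterate the same tail argument on $\mb h'$'' step is itself a Hanson--Wright-type recursion.) Your route---pass to $\ell_2$, isolate the trace, invoke a quadratic-form concentration bound---is the natural one for this statement, and Marsden et al.'s argument for the analogous lemma is in the same spirit, hinging on the same two facts about rank-$k$ projections; so this is closer to a reconstruction than a genuinely different proof, with the main polish needed being the explicit small-$k$ base case.
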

Instead, we will need a similar concentration result for random unit vectors in the unit sphere.

\begin{lemma}
\label{lemma:concentration_new}
    Let $k\leq d$ and $\mb x_1,\ldots,\mb x_k$ be $k$ orthonormal vectors, and $\zeta\leq 1$. 
    \begin{equation*}
        \Pbb_{\mb y\sim\Ucal(S^{d-1})} \paren{|\mb x_i^\top \mb y|\leq \frac{\zeta}{\sqrt d},\, i\in[k]} \leq \paren{\frac{2}{\sqrt{\pi}}\zeta }^k \leq (\sqrt 2 \zeta)^k.
    \end{equation*}
\end{lemma}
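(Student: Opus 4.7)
The plan is to compute this probability via the explicit joint density of the projections. By rotation invariance of $\Ucal(S^{d-1})$, I may assume $\mb x_i = \mb e_i$ for $i \in [k]$ and use the classical fact that for $k \le d-1$, the first $k$ coordinates of $\mb y \sim \Ucal(S^{d-1})$ have joint density
$$f(\mb t) = \frac{\Gamma(d/2)}{\pi^{k/2}\Gamma((d-k)/2)}(1-\|\mb t\|^2)^{(d-k-2)/2}$$
on the open unit ball $\|\mb t\| < 1$. The edge case $k = d$ is immediate: then $\{\mb x_i\}$ is an orthonormal basis, so the identity $\sum_{i=1}^d (\mb x_i^\top \mb y)^2 = 1$ would force $1 \le d \cdot \zeta^2/d = \zeta^2$, contradicting $\zeta < 1$, and the probability is $0$.

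For $k \le d-2$, the exponent $(d-k-2)/2$ is non-negative, so $(1-\|\mb t\|^2)^{(d-k-2)/2} \le 1$ on the integration region. Bounding the integrand by its supremum and multiplying by the box volume $(2\zeta/\sqrt d)^k$, I obtain the prefactor estimate. I will then apply the Gamma ratio bound $\Gamma(d/2) \le (d/2)^{k/2}\Gamma((d-k)/2)$, which follows from log-convexity of $\log \Gamma$: for $0 \le s \le 1$ and $x > 0$, $\Gamma(x+s) \le x^s \Gamma(x)$, and iterating from $x = (d-k)/2$ with total step $k/2$ yields the claim. Substituting gives
$$\Pbb \le \frac{(d/2)^{k/2}}{\pi^{k/2}} \cdot \frac{(2\zeta)^k}{d^{k/2}} = \paren{\sqrt{2/\pi}\,\zeta}^k,$$
and since $\sqrt{2/\pi} \le 2/\sqrt\pi \le \sqrt 2$, this chains to both stated bounds.

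The remaining case $k = d-1$ has exponent $-1/2$, so the density is singular near $\|\mb t\| = 1$. However, my integration box lies in the ball of radius $\zeta\sqrt{(d-1)/d} < 1$, so I can uniformly bound $(1-\|\mb t\|^2)^{-1/2} \le \sqrt{d/(d-(d-1)\zeta^2)}$ on the box. This produces the same type of estimate as in the previous case, multiplied by that extra factor. I will absorb it using the elementary Stirling-type bound $\Gamma(d/2) \le \sqrt\pi\,d^{(d-2)/2}$, which I will prove by induction on $d$ in steps of two from the base cases $d = 2, 3$ via $\Gamma((d+2)/2) = (d/2)\Gamma(d/2)$. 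In the worst case $\zeta = 1$, this gives exactly the required inequality and the same final bound $(2\zeta/\sqrt\pi)^{d-1}$.

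The main technical obstacle is tracking constants carefully through the Gamma-function manipulations to preserve the sharp factor $2/\sqrt\pi$, and handling the singular boundary case $k = d-1$ where the density is unbounded; both are controlled by elementary log-convexity bounds on $\Gamma$.
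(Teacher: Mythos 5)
Your proof is correct, and it takes a genuinely different route from the paper's. The paper proves the lemma by induction on the ambient dimension $d$: it first bounds $\Pbb(|y_1|\leq \zeta/\sqrt d)$ via a surface-area ratio, then conditions on $y_1$ to reduce to a uniform distribution on a sphere of one dimension lower, noting that the radius shrinks slowly enough ($d(1-y_1^2)\geq d-1$) for the inductive hypothesis to apply with the same $\zeta$. Your argument instead integrates the explicit joint density $\frac{\Gamma(d/2)}{\pi^{k/2}\Gamma((d-k)/2)}(1-\|\mb t\|^2)^{(d-k-2)/2}$ of the $k$ marginal coordinates directly over the box, bounding the integrand by its supremum and then controlling the normalizing constant by the log-convexity bound $\Gamma(d/2)\leq (d/2)^{k/2}\Gamma((d-k)/2)$. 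Both approaches pass through essentially the same Gamma-ratio inequalities, but yours avoids the inductive conditioning and gives the slightly sharper constant $\sqrt{2/\pi}$ when $k\leq d-2$; the paper's induction uniformly gives $2/\sqrt\pi$ and avoids the case split you need at $k=d-1$ (where the density is singular) and $k=d$. One small bonus of your approach: you explicitly handle $k=d$, which the paper's induction step formally excludes by the restriction ``$1\leq k<d$'' even though the lemma permits $k\leq d$ (the case is trivial, as you note, since the constraint is incompatible with the sphere except on a measure-zero set).
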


\begin{proof}
    First, by isometry, we can suppose that the orthonormal vectors are simply $\mb e_1,\ldots,\mb e_k$. We now prove the result by induction on $d$. For $d=1$, the result holds directly. Fix $d\geq 2$, and $1\leq k<d$. Then, if $S_n$ is the surface area of $S^n$ the $n$-dimensional sphere, then
    \begin{equation}\label{eq:for_induction}
        \Pbb\paren{|y_1|\leq \frac{\zeta}{\sqrt d}} \leq \frac{S_{d-2}}{S_{d-1}} \frac{2\zeta}{\sqrt d} = \frac{2\zeta}{\sqrt {\pi d}}\frac{\Gamma(d/2)}{\Gamma(d/2-1/2)} \leq \frac{2}{\sqrt{\pi}}\zeta.
    \end{equation}
    Conditionally on the value of $y_1$, the vector $(y_2,\ldots,y_d)$ follows a uniform distribution on the $(d-2)$-sphere of radius $\sqrt{1-y_1^2}$. Then,
    \begin{equation*}
        \Pbb\paren{|y_i|\leq \frac{\zeta}{\sqrt d},\, 2\leq i\leq k\mid x_1} = \Pbb_{\mb z\sim\Ucal(S^{d-2})}\paren{|z_i|\leq \frac{\zeta}{\sqrt {d(1-y_1^2)}},\, 2\leq i\leq k}
    \end{equation*}
    Now recall that since $|x_1|\leq 1/\sqrt d$, we have $d(1-x_1^2)\geq d-1$. Therefore, using the induction,
    \begin{equation*}
        \Pbb\paren{|y_i|\leq \frac{\zeta}{\sqrt d},\, 2\leq i\leq k\mid x_1} \leq \Pbb_{\mb z\sim\Ucal(S^{d-2})}\paren{|z_i|\leq \frac{\zeta}{\sqrt {d-1}},\, 2\leq i\leq k} \leq \paren{\frac{2\zeta}{\sqrt \pi}}^{k-1}.
    \end{equation*}
    Combining this equation with Eq~\eqref{eq:for_induction} ends the proof.
\end{proof}

\comment{
We also need an anti-concentration result, a simple generalization from \cite{blanchard2022universal}.

\begin{lemma}\label{lemma:anti-concentration}
    Let $k< d$ and $\mb x_1,\ldots,\mb x_k$ be $k$ orthonormal vectors and $\zeta\leq 1$. Then,
    \begin{equation*}
        \Pbb_{\mb y\sim\Ucal(S^{d-1})} \left(|\mb x_i^\top \mb y| \leq \frac{\zeta}{\sqrt d}  , \forall i\in [k]\right) \geq \paren{\frac{2}{e^2\pi}\zeta}^k.
    \end{equation*}
\end{lemma}

\begin{proof}
    Again, we can assume that the orthonormal vectors are $\mb e_1,\ldots,\mb e_k$ and proceed by induction. For $1\leq k<d$, we have
    \begin{equation}\label{eq:for_induction_2}
        \Pbb\paren{|y_1| \leq \frac{\zeta}{\sqrt d}} \geq  \frac{S_{d-2}}{S_{d-1}} \frac{2\zeta}{\sqrt d} \paren{1-\frac{\zeta^2}{d}}^{(d-2)/2} \geq  \frac{S_{d-2}}{S_{d-1}} \frac{2 e^{-\zeta^2}\zeta}{\sqrt d} \geq \frac{2}{e\pi}\zeta.
    \end{equation}
    In the last equation, we used the fact that $\Gamma(d/2)/\Gamma(d/2-1/2)\geq \Gamma(1)/\Gamma(1/2) \geq 1/\sqrt \pi.$ Now, as in the previous lemma, 
    \begin{align*}
        \Pbb\paren{|y_i|\leq \frac{\zeta}{\sqrt d},\, 2\leq i\leq k\mid x_1} &= \Pbb_{\mb z\sim\Ucal(S^{d-2})}\paren{|z_i|\leq \frac{\zeta}{\sqrt {d(1-y_1^2)}},\, 2\leq i\leq k} \\
        &\geq \Pbb_{\mb z\sim\Ucal(S^{d-2})}\paren{|z_i|\leq \frac{\zeta}{\sqrt d},\, 2\leq i\leq k}\\
        &\geq  \paren{\frac{2}{e^2\pi}\zeta\sqrt{\frac{d-1}{d}}}^{k-1} \geq \frac{1}{e} \paren{\frac{2}{e^2\pi}\zeta}^{k-1},
    \end{align*}
    where we used the fact that for $d\geq 2,$ one has $(1-1/d)^d \geq e^{-2}$. Together with Eq~\eqref{eq:for_induction_2}, this yields the desired result.
\end{proof}

}

We next use the following lemma to partition the unit sphere $S^{d-1}$.

\begin{lemma}[\cite{feige2002optimality} Lemma 21]
\label{lemma:feige}
For any $0<\delta<\pi/2$, the sphere $S^{d-1}$ can be partitioned into $N(\delta) =(\Ocal(1)/\delta)^d$ equal volume cells, each of diameter at most $\delta$.
\end{lemma}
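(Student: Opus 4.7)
The proof is a constructive one, carried out in two stages.

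\emph{Stage 1: a small-diameter partition, possibly with unequal cells.} The plan is to fix a maximal $(\delta/2)$-separated subset $\{p_1,\ldots,p_M\} \subset S^{d-1}$ obtained greedily. The open geodesic caps of radius $\delta/4$ around the $p_i$'s are pairwise disjoint, each of relative spherical measure $\Omega(\delta^{d-1})$ up to dimension-dependent constants that are absorbed into the $\Ocal(1)$ in the statement, giving the volume-counting bound $M \leq (\Ocal(1)/\delta)^{d-1}$. By maximality the $p_i$ form a $(\delta/2)$-covering, so their Voronoi cells $\{W_i\}_{i=1}^M$ yield a partition of $S^{d-1}$ in which every cell has diameter at most $\delta$.

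\emph{Stage 2: equalize the measures.} Let $v^\star = \mu(S^{d-1})/N$ be the target common cell volume, for an $N$ to be chosen below. I subdivide each $W_i$ by recursive bisection: at each step, a sub-cell of measure exceeding $v^\star$ is split in two by a hyperplane through the origin whose orientation is chosen so that it halves the sub-cell's measure (existence follows from the intermediate value theorem as one rotates the normal). Each such cut preserves membership in $W_i$, hence the diameter bound. After $\Ocal(\log(\mu_i/v^\star))$ levels, every sub-piece has measure between $v^\star/2$ and $v^\star$, and the total count across all $i$ is $\Theta(N)$. Choosing $N = (\Ocal(1)/\delta)^d$ leaves enough slack: a final local adjustment of boundaries between adjacent sub-pieces---shifting the separating hyperplane by a measure-preserving amount to transfer mass from the larger to the smaller---equalizes the measures without enlarging any piece beyond the geodesic ball that contains its ancestor $W_i$, so diameters are preserved.

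\emph{Main difficulty.} The main obstacle will be the bookkeeping in the last step: naive bisections only guarantee measures up to a factor of two, and coordinating the boundary-shift adjustments across the adjacency graph of sub-pieces (so that a single global fixed-point of mass transfers results in exactly $N$ cells of measure $v^\star$) requires a careful flow-style argument on this graph. The gap between the tighter $(\Ocal(1)/\delta)^{d-1}$ bound from Stage~1 and the looser $(\Ocal(1)/\delta)^d$ claimed in the lemma provides exactly the freedom needed to absorb the asymmetries between adjacent Voronoi cells during the equalization step.
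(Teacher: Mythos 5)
The paper does not prove this lemma; it is cited verbatim from \cite{feige2002optimality}, so there is no in-paper proof for your sketch to be compared against. Judged on its own merits, Stage~1 is standard and correct: a maximal $(\delta/2)$-separated set gives Voronoi cells of diameter at most $\delta$, and the packing bound $M \leq (\Ocal(1)/\delta)^{d-1}$ follows from the disjointness of the $(\delta/4)$-caps.

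The gap is in Stage~2. After repeatedly halving a Voronoi cell $W_i$ by great hyperspheres, \emph{every} sub-piece of $W_i$ has exactly the same measure $\mu(W_i)/2^{k_i}$; bisection creates no imbalance within a single $W_i$. Hence the imbalance you need to correct lies entirely \emph{between} distinct $W_i$'s, and the ``local adjustment'' that transfers mass from a larger piece to a smaller one must cross Voronoi boundaries. Your claim that such adjustments keep every piece ``within the geodesic ball that contains its ancestor $W_i$'' is therefore not tenable: a piece that absorbs mass from a neighbor in $W_j$ necessarily extends outside $W_i$, and you give no bound on how far it extends, nor on how the shifts compound when many pieces must shed or absorb measure along a chain through the adjacency graph. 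Your own phrasing (``requires a careful flow-style argument'') acknowledges that the core difficulty is being deferred rather than resolved, and the remark that the exponent gap between $(\Ocal(1)/\delta)^{d-1}$ and $(\Ocal(1)/\delta)^d$ ``provides exactly the freedom needed'' is an assertion, not an argument. What is missing is a concrete mechanism---e.g., choosing per-$W_i$ sub-cell counts and routing measure along a spanning structure with explicit geometric control on how far any boundary can move---together with a verification that every transferred region stays inside some cap of radius $\delta/2$. As written, the construction establishes a partition into cells of diameter $\leq\delta$ with measures in $[v^\star/2, v^\star]$, which is weaker than the exact equal-measure conclusion the lemma asserts.
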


Following the notation from \cite{blanchard2023quadratic}, we denote by $\Vcal_\delta = \{V_i(\delta), i\in[N(\delta)]\}$ the corresponding partition, and consider a set of representatives $\Dcal_\delta=\{\mb b_i(\delta), i\in [N(\delta)]\}\subset S^{d-1}$ such that for all $i\in [N(\delta)]$, $\mb b_i(\delta)\in V_i(\delta)$. With these notations we can define the discretization function $\phi_\delta$ as follows
\begin{equation*}
    \phi_\delta(\mb x) = \mb b_i(\delta) ,\quad \mb x\in V_i(\delta).
\end{equation*}
We then denote by $\Ucal_\delta$ the distribution of $\phi_\delta(\mb z)$ where $\mb z\sim\Ucal(S^{d-1})$ is sampled uniformly on the sphere. Note that because the cells of $\Vcal_\delta$ have equal volume, $\Ucal_\delta$ is simply the uniform distribution on the discretization $\Dcal_\delta$.\\

We are now ready to give the modifications necessary to the proofs, to include a factor $\ln\frac{1}{\epsilon}$ for the necessary memory. For their lower bounds, \cite{marsden2022efficient} exhibit a distribution of convex functions that are hard to optimize. Building upon their work \cite{blanchard2023quadratic} construct classes of convex functions that are hard to optimize, but that also depend adaptively on the considered optimization algorithm. For both, the functions considered a barrier term of the form $\|\mb A \mb x\|_\infty $, where $\mb A$ is a matrix of $\approx d/2$ rows that are independently drawn as uniform on the hypercube $\Ucal(\{\pm 1\}^d)$. The argument shows that memorizing $\mb A$ is necessary to a certain extent. As a result, the lower bounds can only apply for a memory of at most $\Ocal(d^2)$ bits, which is sufficient to memorize such a binary matrix. Instead, we draw rows independently according to the distribution $\Ucal_\delta$, where $\delta\approx \epsilon$. We explicit the corresponding adaptations for each known trade-off. We start with the lower bounds from \cite{blanchard2023quadratic} for ease of exposition; although these build upon those of \cite{marsden2022efficient}, their parametrization makes the adaptation more straightforward.

\subsubsection{Lower bound of \cite{blanchard2023quadratic} for convex optimization and deterministic algorithms}

For this lower bound, we use the exact same form of functions as they introduced,
\begin{equation*}
    \max \set{ \|\mb A\mb x\|_\infty - \eta,\eta \mb v_0^\top \mb x, \eta\left(\max_{p\leq p_{max},l\leq l_p} \mb v_{p,l}^\top \mb x - p\gamma_1 - l\gamma_2\right) },
\end{equation*}
with the difference that rows of $\mb A$ are take i.i.d. distributed according to $\Ucal_{\delta'}$ instead of $\Ucal(\{\pm 1\}^d)$. As a remark, they use $n=\ceil{ d/4}$ rows for $\mb A$. Except for $\eta$, we keep all parameters $\gamma_1$, $\gamma_2$, etc as in the original proof, and we will take $\delta'= \epsilon$ and $\eta = 2\sqrt d \epsilon$. The reason why we introduced $\delta'$ instead of $\delta$ is that the original construction also needs the discretization $\phi_\delta$. This is used during the optimization procedure which constructs adaptively this class of functions, and only needs $\delta=\text{poly}(1/d)$ instead of $\delta$ of order $\epsilon$.

\begin{theorem}\label{thm:improved_convex_optim}
    For $\epsilon \leq 1/(2d^{4.5})$ and any $\delta\in[0,1]$, a deterministic first-order algorithm guaranteed to minimize $1$-Lipschitz convex functions over the unit ball with $\epsilon$ accuracy uses at least $d^{2-\delta}\ln\frac{1}{\epsilon}$ bits of memory or makes $\tilde\Omega(d^{1+\delta/3})$ queries.
\end{theorem}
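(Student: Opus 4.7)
The plan is to adapt the proof of the deterministic convex-optimization lower bound of \cite{blanchard2023quadratic} by changing only the distribution of the barrier rows, while keeping the overall adaptive hard-instance family intact. Concretely, draw the $n=\lceil d/4 \rceil$ rows $\mb{a}_1,\ldots,\mb{a}_n$ of $\mb{A}$ i.i.d.\ from $\Ucal_{\epsilon}$ rather than from $\Ucal(\{\pm 1\}^d)$, set $\eta = 2\sqrt{d}\,\epsilon$, and leave the rest of the construction (the vectors $\mb{v}_0$ and $\{\mb{v}_{p,l}\}$, the offsets $\gamma_1,\gamma_2$, the coarse discretization $\phi_\delta$ with $\delta=\mathrm{poly}(1/d)$, and the parameters $p_{max},l_p$) exactly as in \cite{blanchard2023quadratic}. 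The choice $\eta = 2\sqrt{d}\,\epsilon$ keeps the function $1$-Lipschitz and calibrates the wall so that any $\epsilon$-optimal $\mb{x}$ must satisfy $\|\mb{A}\mb{x}\|_\infty \lesssim \sqrt{d}\,\epsilon$; the hypothesis $\epsilon \leq 1/(2d^{4.5})$ guarantees that $\eta$ stays well below the adaptive thresholds $\gamma_1$ and $\gamma_2$ used in \cite{blanchard2023quadratic}, so the same partitioning of query behavior into ``queries that learn the wall'' versus ``queries that navigate the adaptive terms'' still governs the analysis.

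The only place where the law of $\mb{A}$ actually enters the lower-bound argument in \cite{blanchard2023quadratic,marsden2022efficient} is through a concentration statement of the form of \cref{lemma:sensitive_base_marsden}: a fresh row is unlikely to have a small inner product with every element of a fixed orthonormal set. Under $\Ucal_\epsilon$, \cref{lemma:concentration_new} plays exactly this role with the finer bound $\Pbb(|\mb{x}_i^\top\mb{a}|\leq \zeta/\sqrt{d} \text{ for all } i\in[k]) \leq (\sqrt{2}\,\zeta)^k$, up to a harmless total-variation correction between $\Ucal_\epsilon$ and the uniform sphere measure (which is absorbed by the regime $\epsilon\leq 1/(2d^{4.5})$). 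Applied with $\zeta \asymp \epsilon$ instead of the constant $t$ used in the original argument, this yields a qualitatively stronger statement: each ``captured'' row of $\mb{A}$ must be stored not just up to its sign pattern but up to $\epsilon$-scale on the relevant subspace, which contributes $\Theta(\ln\tfrac{1}{\epsilon})$ additional bits per direction.

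Threading this strengthened concentration bound through the encoding/counting argument of \cite{blanchard2023quadratic}, the conclusion that the algorithm must effectively retain $\Omega(d^{1-\delta})$ ``useful'' row directions, or else make $\tilde\Omega(d^{1+\delta/3})$ queries, is preserved verbatim; only the per-row memory cost changes, from $\Theta(d)$ bits to $\Theta(d\ln\tfrac{1}{\epsilon})$ bits. Multiplying, the resulting memory lower bound becomes $\Omega(d^{2-\delta}\ln\tfrac{1}{\epsilon})$, as claimed.

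The main obstacle is verifying that each internal step of the case analysis in \cite{blanchard2023quadratic}, in particular the sub-lemmas that bound the probability an adversarial query ``accidentally'' satisfies the wall constraint or reveals structure about $\mb{v}_0,\mb{v}_{p,l}$, remains valid at the rescaled width $\eta = 2\sqrt{d}\,\epsilon$ and with the row law $\Ucal_\epsilon$. The regime $\epsilon\leq 1/(2d^{4.5})$ is precisely chosen to make these sub-lemmas go through unchanged: it ensures that $\eta$ is negligible against every polynomial-in-$d$ margin that appears in \cite{blanchard2023quadratic}, so that the only quantitative change relative to their proof is the replacement $(1/2)^{c_H k}\mapsto (\sqrt 2 \epsilon)^k$ in the concentration step, which is exactly the source of the $\ln\tfrac{1}{\epsilon}$ factor.
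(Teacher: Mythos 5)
Your proposal matches the paper's approach almost exactly: redraw the rows of $\mb A$ from the discretized sphere measure $\Ucal_{\delta'}$ with $\delta'=\epsilon$, set $\eta = 2\sqrt d\,\epsilon$, leave the rest of the adaptive construction unchanged, and replace the hypercube concentration lemma (\cref{lemma:sensitive_base_marsden}) by the new sphere concentration bound (\cref{lemma:concentration_new}) inside the entropy argument of the OVGH lower bound, which inflates the per-row information cost from $\Theta(1)$ to $\Theta(\ln\tfrac{1}{\epsilon})$ and hence the memory lower bound by a $\ln\tfrac{1}{\epsilon}$ factor. Two small imprecisions worth noting but not affecting the high-level correctness: the passage from $\Ucal_\epsilon$ to the uniform sphere measure is not a total-variation correction (the two measures are mutually singular) but, as in the paper, an inflation of the concentration radius by the discretization error $\delta'\sqrt d$, and the parameter $\zeta$ in the concentration lemma is $\Theta(\mathrm{poly}(d)\,\epsilon)$ rather than $\asymp\epsilon$, which is why the hypothesis $\epsilon\le 1/(2d^{4.5})$ is needed to keep $\zeta+\delta'\sqrt d$ bounded away from $1$.
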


With the changes defined above, we can easily check that all results from \cite{blanchard2023quadratic} which reduce convex optimization to the optimization procedure, then the optimization procedure to their Orthogonal Vector Game with Hints (OVGH) \cite[Game 2]{blanchard2023quadratic}, are not affected by our changes. The only modifications to perform are to the proof of query lower bound for the OVGH \cite[Proposition 14]{blanchard2023quadratic}. We emphasize that the distribution of $\mb A$ is changed in the optimization procedure but also in OVGH as a result.

\begin{proposition}\label{prop:lower_bound_queries_game}
Let $k\geq 20\frac{M+3d\log(2d)+1}{n\log_2(\sqrt 2(\zeta+\delta'\sqrt d))^{-1}}$. And let $0<\alpha,\beta\leq 1$ such that
$\alpha(\sqrt d/\beta)^{5/4}\leq \zeta/\sqrt d$ where $\zeta\leq 1$. If the Player wins the adapted OVGH with probability at least $1/2$, then $m\geq \frac{1}{8} (1+\frac{30\log_2 d}{\log_2(\sqrt 2 (\zeta+\delta'\sqrt d))^{-1}})^{-1} d$.
\end{proposition}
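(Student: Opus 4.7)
The plan is to follow the proof of \cite[Proposition 14]{blanchard2023quadratic} almost verbatim, and to update only the single-row concentration bound used for the random matrix $\mb A$, since the rows are now drawn i.i.d.\ from $\Ucal_{\delta'}$ rather than from $\Ucal(\{\pm 1\}^d)$. The intuition for the extra $\log_2(\sqrt 2(\zeta+\delta'\sqrt d))^{-1}$ appearing in the denominator of the hypothesis on $k$ is that this is precisely the per-row entropy contribution of the new distribution, which replaces the universal constant $c_H$ appearing in \cref{lemma:sensitive_base_marsden}.

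The first step is to replace \cref{lemma:sensitive_base_marsden} by a concentration estimate tailored to $\Ucal_{\delta'}$. Writing any row as $\mb a = \phi_{\delta'}(\tilde{\mb a})$ with $\tilde{\mb a}\sim\Ucal(S^{d-1})$ and using $\|\phi_{\delta'}(\tilde{\mb a})-\tilde{\mb a}\|\leq \delta'$, one has $|\mb a^\top \mb z_i|\leq |\tilde{\mb a}^\top \mb z_i|+\delta'$ for any unit vector $\mb z_i$. Hence, for orthonormal $\mb z_1,\ldots,\mb z_k$, applying \cref{lemma:concentration_new} with threshold $(\zeta+\delta'\sqrt d)/\sqrt d$ yields
$$\Pbb\paren{|\mb a^\top \mb z_i|\leq \zeta/\sqrt d,\; \forall i\in[k]} \leq \paren{\sqrt 2\,(\zeta+\delta'\sqrt d)}^{k}.$$
Taking the product over the $n$ independent rows of $\mb A$ gives
$$\Pbb\paren{\|\mb A \mb z_i\|_\infty \leq \zeta/\sqrt d, \;\forall i\in [k]} \leq \paren{\sqrt 2(\zeta+\delta'\sqrt d)}^{nk}.$$

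The second step is the encoding/counting argument of \cite{blanchard2023quadratic}, applied with this new concentration bound in place of $2^{-c_H nk}$. The Player's output $k$-frame is determined by the $M$-bit message together with the $m$ hint vectors, each of which costs at most $3d\log_2(2d)$ bits to represent at the required discretization; there are therefore at most $2^{M+3md\log_2(2d)}$ possible output frames. A union bound over these possibilities yields
$$\tfrac{1}{2}\leq \Pbb(\text{Player wins}) \leq 2^{M+3md\log_2(2d)+1}\paren{\sqrt 2(\zeta+\delta'\sqrt d)}^{nk},$$
which after taking base-$2$ logarithms becomes $nk\log_2(\sqrt 2(\zeta+\delta'\sqrt d))^{-1}\leq M+3md\log_2(2d)+2$. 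Substituting the hypothesis $k\geq 20(M+3d\log_2(2d)+1)/(n\log_2(\sqrt 2(\zeta+\delta'\sqrt d))^{-1})$ and rearranging in $m$ gives the stated lower bound.

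The main obstacle is verifying that the rescaling of $\zeta$ by $\sqrt d$ (due to $\|\mb a\|_2\approx 1$ in place of $\|\mb a\|_\infty = 1$ in the Bernoulli case) propagates correctly through the original argument: every threshold of the form ``$|\mb a^\top \mb z|\leq t$'' in \cite{blanchard2023quadratic} becomes ``$|\mb a^\top \mb z|\leq t/\sqrt d$'' in our setting, and one must check that the subsequent algebraic manipulations --- in particular the reduction translating the OVGH outcome back into a small-width certificate for the polyhedron in the optimization procedure --- remain consistent with this $1/\sqrt d$ rescaling. Under the standing choices $\delta'=\epsilon \leq 1/(2d^{4.5})$ and $\zeta\leq 1$, the base $\sqrt 2(\zeta+\delta'\sqrt d)$ stays bounded away from $1$, and when $\zeta$ is taken of order $\epsilon$ one has $\log_2(\sqrt 2(\zeta+\delta'\sqrt d))^{-1}=\Omega(\ln(1/\epsilon))$, which is exactly the mechanism producing the extra $\ln(1/\epsilon)$ factor in \cref{thm:improved_lower_bounds}.
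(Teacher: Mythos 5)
Your first step---replacing \cref{lemma:sensitive_base_marsden} by a concentration bound tailored to $\Ucal_{\delta'}$, via \cref{lemma:concentration_new} and the triangle inequality $|\mb a^\top \mb z_i|\le|\tilde{\mb a}^\top \mb z_i|+\delta'$---is correct and matches what the paper does. The gap is in your second step.

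You present the second step as ``the encoding/counting argument of \cite{blanchard2023quadratic}'' but then perform a naive union bound over all $2^{M+3md\log_2(2d)}$ pairs (message, hint sequence). This is not the argument from \cite{blanchard2023quadratic}, and it does not obviously work. There are two distinct issues. First, the hints are correlated with $\mb A$: conditioning on the message and on the realized hints can substantially change the conditional distribution of the unobserved rows, so you cannot just multiply the cardinality of the (message, hints) space by the unconditional probability that a fixed frame is nearly orthogonal to i.i.d.\ rows. The paper's actual proof avoids this by working with conditional entropies and mutual information: it bounds $H(\mb A'\mid \mb Y)$ and $I(\mb A;\mb V)$, and the final lower bound on $M$ is obtained by an entropy chain-rule/chaining calculation, not a counting bound. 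Second, your per-hint bookkeeping is off by a factor of roughly $d/k$. You charge $3d\log_2(2d)$ bits per hint, i.e., the raw description length of a vector at $\mathrm{poly}(1/d)$ precision. But each hint is a row of $\mb A$ drawn from $\Dcal_{\delta'}$, which at the precision $\delta'\approx\epsilon$ actually has $\Theta(d\ln(1/\epsilon))$ bits, so even the description-length version of your union bound would blow up. The paper instead proves the much tighter structural bound $I(\mb A;\mb V)\le 3km\log_2 d+1$, reflecting that each hint contributes useful information only through its interaction with the $k$-dimensional output frame. This $k$-dependence, not a $d$-dependence, is what makes the final lower bound on $m$ come out on the order of $d$. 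Finally, some bookkeeping does not match: in the paper the concentration bound is applied with $\lceil k/5\rceil$ orthonormal vectors (not $k$) and to the $n-m$ unobserved rows (not all $n$), so the exponent should be $(n-m)\lceil k/5\rceil$ rather than $nk$; this changes the constants in the claimed inequality on $m$.
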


\begin{proof}
    We use the same proof and only highlight the modifications. The proof is unchanged until the step when the concentration result \cref{lemma:sensitive_base_marsden} is used. Instead, we use \cref{lemma:concentration_new}. With the same notations as in the original proof, we constructed $\ceil{k/5}$ orthonormal vectors $\mb Z=[\mb z_1,\ldots,\mb z_{\ceil{k/5}}]$ such that all rows $\mb a$ of $\mb A'$ (which is $\mb A$ up to some observed and unimportant rows) one has
    \begin{equation*}
        \|\mb Z^\top \mb a\|_\infty \leq \frac{\zeta}{\sqrt d}.
    \end{equation*}
    Next, by \cref{lemma:concentration_new}, we have
    \begin{align*}
        \left| \set{ \mb a \in\Dcal_{\delta'}: \|\mb Z^\top \mb a\|_\infty  \leq \frac{\zeta}{\sqrt d}} \right| &\leq |\Dcal_{\delta'}|\cdot  \Pbb_{\mb a\sim\Ucal_{\delta'}} \paren{ \|\mb Z^\top \mb a\|_\infty  \leq \frac{\zeta}{\sqrt d}}\\
        &\leq |\Dcal_{\delta'}|\cdot \Pbb_{\mb z\sim\Ucal(S^{d-1})} \paren{ \|\mb Z^\top \mb z\|_\infty  \leq \frac{\zeta}{\sqrt d} + \delta'}\\
        &\leq |\Dcal_{\delta'}|\cdot \paren{\sqrt 2 (\zeta + \delta'\sqrt d)}^{\ceil{k/5}}.
    \end{align*}
    Hence, using the same arguments as in the original proof, we obtain
    \begin{equation*}
        H(\mb A'\mid \mb Y) \leq   (n-m)\paren{\log_2|\Dcal_{\delta'}|+ \Pbb(\Ecal) \cdot \frac{k}{5} \log_2\paren{\sqrt 2 (\zeta + \delta'\sqrt d)} },
    \end{equation*}
    where $\Ecal$ is the event when the algorithm succeeds at the OVGH game. In the next step, we need to bound $H(\mb A\mid \mb V) - H(\mb G, \mb j, \mb c)$ where $\mb V$ stores hints received throughout the game, $\mb G$ stores observed rows of $\mb A$ during the game, and $\mb j, \mb c$ are auxiliary variables. The latter can be treated as in the original proof. We obtain
    \begin{align*}
        H(\mb A\mid \mb V) - H(\mb G, \mb j, \mb c) &\geq H(\mb A) - H(\mb G) - I(\mb A;\mb V) - 3m\log_2(2d)\\
        &\geq (n-m)\log_2|\Dcal_{\delta'}| - 3m\log_2(2d) - I(\mb A, \mb V).
    \end{align*}
    Now the same arguments as in the original proof show that we still have $I(\mb A, \mb V) \leq 3km\log_2 d + 1$, and that as a result, if $M$ is the number of bits stored in memory,
    \begin{equation*}
        M \geq \frac{k}{10} \log_2\paren{\frac{1}{\sqrt 2 (\zeta + \delta'\sqrt d)}} (n-m) - 3km\log_2 d -1 -3d\log_2(2d).
    \end{equation*}
    Then, with the same arguments as in the original proof, we can conclude.
\end{proof}

We are now ready to prove \cref{thm:improved_convex_optim}. With the parameter $k=\lceil 20\frac{M+3d\log(2d)+1}{n\log_2(\sqrt 2(\epsilon d^4/2+\delta'\sqrt d))^{-1}} \rceil$ and the same arguments, we show that an algorithm solving the convex optimization up to precision $\eta/(2\sqrt d)=\epsilon$ yields an algorithm solving the OVGH where the parameters $\alpha=\frac{2\eta}{\gamma_1}$ and $\beta=\frac{\gamma_2}{4}$ satisfy
\begin{equation*}
    \alpha\paren{\frac{\sqrt d}{\beta}}^{5/4} \leq \frac{\eta d^3}{4} = \frac{d^{3.5}\epsilon}{2}.
\end{equation*}
We can then apply Proposition \ref{prop:lower_bound_queries_game} with $\zeta = d^4\epsilon/2$. Hence, if $Q$ is the maximum number of queries of the convex optimization algorithm, we obtain
\begin{equation*}
     \lceil Q/p_{max}\rceil +1 \geq \frac{1}{8} \paren{1+\frac{30\log_2 d}{ \log_2 \frac{1}{d^4\epsilon}-1/2}}^{-1} d \geq \frac{d}{8\cdot 61},
\end{equation*}
where in the last inequality we used $\epsilon\leq 1/(2d^{4.5})$. As a result, with the same arguments, we obtain
\begin{equation*}
    Q=\Omega\paren{\frac{d^{5/3}\ln^{1/3}\frac{1}{\epsilon}}{(M+\ln d)^{1/3} \ln^{2/3} d}}.
\end{equation*}
This ends the proof of \cref{thm:improved_convex_optim}.

\subsubsection{Lower bound of \cite{blanchard2023quadratic} for feasibility problems and deterministic algorithms}

We improve the memory dependence by showing the following result.

\begin{theorem}\label{thm:improved_feasibility}
For $\epsilon=1/(48d^3)$ and any $\delta\in[0,1]$, a deterministic algorithm guaranteed to solve the feasibility problem over the unit ball with $\epsilon$ accuracy uses at least $d^{2-\delta}\ln\frac{1}{\epsilon}$ bits of memory or makes at least $\tilde \Omega(d^{1+\delta})$ queries.
\end{theorem}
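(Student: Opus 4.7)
The plan is to mirror the adaptation carried out for \cref{thm:improved_convex_optim}: I would take the feasibility lower-bound construction of \cite{blanchard2023quadratic} and replace the $\Ucal(\{\pm 1\}^d)$ sampling of the rows of the barrier matrix $\mb A$ by the discretized sphere distribution $\Ucal_{\delta'}$ with $\delta' = \Theta(\epsilon)$. Because each row of $\mb A$ now carries $\log_2|\Dcal_{\delta'}| = \Theta(d \ln\frac{1}{\epsilon})$ bits of entropy rather than $d$ bits, the information-theoretic bottleneck on $M$ is scaled up by the factor $\ln\frac{1}{\epsilon}$, while the query side of the argument is left untouched.

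First, I would re-examine the reduction in \cite{blanchard2023quadratic} from feasibility to their Orthogonal Vector Game variant used for the $d^{1+\delta}$ trade-off. The reduction only relies on the rows of $\mb A$ being i.i.d., unit-normalized, and on the barrier $\{\mb x : \|\mb A \mb x\|_\infty \leq \eta\}$ containing an $\epsilon$-ball while forcing the exposure of many new rows at each successful feasibility query. With rows on $S^{d-1}$ rather than on $\{\pm 1\}^d$, the natural scaling is $\eta = \Theta(\sqrt d\,\epsilon)$, and the barrier still contains an $\epsilon$-ball provided $\epsilon \leq 1/(48d^3)$, which is exactly the slack built into the statement. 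The adaptive half-space cuts used in the reduction depend only on the query points the algorithm submits, so they carry over verbatim.

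Second, I would re-prove the analogue of \cref{prop:lower_bound_queries_game} for this game, which is the only step where the distribution of $\mb A$ enters substantively. Following the original proof, the Player extracts $\ceil{k/5}$ orthonormal vectors $\mb Z$ such that $\|\mb Z^\top \mb a\|_\infty \leq \zeta/\sqrt d$ for every unseen row $\mb a$. \cref{lemma:concentration_new} then bounds the number of surviving codewords in $\Dcal_{\delta'}$ by $|\Dcal_{\delta'}| \cdot (\sqrt 2 (\zeta + \delta' \sqrt d))^{\ceil{k/5}}$, and the same Fano-type entropy calculation, combined with the mutual-information bound $I(\mb A;\mb V) \leq 3 k m \log_2 d + 1$ on the hints, yields
\[
M \;\geq\; \Omega\!\paren{k (n-m) \ln\tfrac{1}{\epsilon}} \;-\; \Ocal\!\paren{k m \ln d + d \ln d}.
\]
Setting $k \asymp d^\delta$ and $n-m \asymp d$ (as in the original feasibility calibration, which is more aggressive than in convex optimization because no $p_{max}$-round optimization procedure is wrapped around the game) then produces the claimed dichotomy: either $M \geq d^{2-\delta}\ln\frac{1}{\epsilon}$ or the Player's query count $m$ is at least $\tilde\Omega(d^{1+\delta})$, which translates directly into the same lower bound on the number of feasibility oracle queries.

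The main obstacle I expect is verifying that every place where \cite{blanchard2023quadratic} uses the $\pm 1$ structure of rows — for instance anti-concentration arguments guaranteeing that constraints actually bite, or rounding steps mapping query points to cube vertices — survives the switch to $\Ucal_{\delta'}$. The $\delta'\sqrt d$ additive slack in \cref{lemma:concentration_new}, together with the choice $\delta' = \Theta(\epsilon)$ matching the feasibility tolerance, should absorb these issues, but each such step needs to be checked individually; this bookkeeping is the bulk of the work that I would not grind through in a sketch.
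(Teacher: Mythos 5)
Your proposal follows essentially the same route as the paper: replace the $\Ucal(\{\pm 1\}^d)$ rows of $\mb A$ by i.i.d. draws from the discretized-sphere distribution $\Ucal_{\delta'}$ with $\delta'=\epsilon$, push this through the reduction to the Orthogonal Vector Game with Hints unchanged, and re-derive the query lower bound for the game via the sphere concentration lemma (\cref{lemma:concentration_new}), which inflates the per-row entropy from $\Theta(d)$ to $\Theta(d\ln\frac{1}{\epsilon})$ bits. The paper does exactly this, applying \cref{prop:lower_bound_queries_game} with $\zeta = 12 d^{2.5}\eta_0$.

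One small bookkeeping slip: you claim that with rows on $S^{d-1}$ the natural barrier scaling is $\eta = \Theta(\sqrt d\,\epsilon)$, but that scaling belongs to the original $\{\pm 1\}^d$ rows (which have norm $\sqrt d$). Since rows drawn from $\Ucal_{\delta'}$ are already unit-normalized, the paper takes $\eta_0 = \epsilon$ directly — a cleaner choice that also simplifies checking that $\{\mb x : \|\mb A\mb x\|_\infty \le \eta_0\}$ still contains an $\epsilon$-ball. This doesn't change your argument structure, but you have the direction of the rescaling reversed. Otherwise the plan matches the paper's proof.
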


We use the exact same class of feasibility problems and only change the parameter $\eta_0$ which constrained successful points to satisfy $\|\mb A\mb x\|_\infty \leq \eta_0$, as well as the rows of $\mb A$ that are sampled i.i.d. from $\Ucal_\delta$. The other parameter $\eta_1=1/(2\sqrt d)$ is unchanged. We also take $\delta'=\epsilon$. Because the rows of $\mb A$ are already normalized, we can take $\eta_0=\epsilon$ directly. Then, the same proof as in \cite{blanchard2023quadratic} shows that if an algorithm solves feasibility problems with accuracy $\epsilon$, there is an algorithm for OVGH for parameters $\alpha = \eta/\eta_1$ and $\beta=\eta_1/2$. Then, we have $\alpha (\sqrt d/\beta)^{5/4} \leq 12 d^2\eta_0$ and we can apply \cref{prop:lower_bound_queries_game} with $\zeta = 12 d^{2.5}\eta_0 = 12 d^{2.5}\epsilon$. Similar computations as above then show that $m\geq d/(8\cdot 61)$, with $k=\Theta(\frac{M+\ln d}{d\ln\frac{1}{\epsilon}})$, so that the query lower bound finally becomes
\begin{equation*}
    Q\geq \Omega\paren{\frac{d^3\ln\frac{1}{\epsilon}}{(M+\ln d)\ln^2 d}}.
\end{equation*}

\begin{remark} 
The more careful analysis---involving the discretization $\Dcal_\delta$ of the unit sphere at scale $\delta$ instead of the hypercube $\{\pm 1\}^d$---allowed to add a $\ln\frac{1}{\epsilon}$ factor to the final query lower bound but also an additional $\ln d$ factor for both convex-optimization and feasibility-problem results. Indeed, the improved \cref{prop:lower_bound_queries_game} shows that the OVGH with adequate parameters requires $\Ocal(d)$ queries, instead of $\Ocal(d/\ln d)$ in \cite[Proposition 14]{blanchard2023quadratic}. At a high level, each hint queried brings information $\Ocal(d\ln d)$ but memorizing a binary matrix $\mb A\in\{\pm 1\}^{\ceil{d/4}\times d}$ only requires $d^2$ bits of memory: hence the query lower bound is limited to $\Ocal(d/\ln d)$. Instead, memorizing the matrix $\mb A$ where each row lies in $\Dcal_\delta$ requires $\Theta(d^2\ln\frac{1}{\epsilon})$ memory, hence querying $d$ hints (total information $\Ocal(d^2\ln d)$) is not prohibitive for the lower bound.
\end{remark}

\subsubsection{Lower bound of \cite{marsden2022efficient} for convex optimization and randomized algorithms} 

We aim to improve the result to obtain the following.

\begin{theorem}\label{thm:improved_marsden}
    For $\epsilon \leq 1/d^4$ and any $\delta\in[0,1]$, any (potentially randomized) algorithm guaranteed to minimize $1$-Lipschitz convex functions over the unit ball with $\epsilon$ accuracy uses at least $d^{1.25-\delta}\ln\frac{1}{\epsilon}$ bits of memory or makes $\tilde\Omega(d^{1+4\delta/3})$ queries.
\end{theorem}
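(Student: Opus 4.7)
The plan is to mirror the adaptation carried out for the deterministic bounds (\cref{thm:improved_convex_optim,thm:improved_feasibility}) but apply it to the randomized-algorithm lower bound of \cite{marsden2022efficient}. We keep the construction of the hard convex function from \cite{marsden2022efficient} unchanged,
\[
    \max\set{\|\mb A\mb x\|_\infty - \eta,\; \eta\,\mb v_0^\top \mb x,\; \eta\left(\max_{p,l} \mb v_{p,l}^\top \mb x - p\gamma_1 - l\gamma_2\right)},
\]
but replace the distribution used to sample the rows of $\mb A$: instead of drawing each row from $\Ucal(\{\pm 1\}^d)$, we draw each of the $n=\ceil{d/4}$ rows i.i.d.\ from $\Ucal_{\delta'}$ with $\delta'=\epsilon$, and we take $\eta = 2\sqrt d\,\epsilon$. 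Every reduction step from convex optimization down to the randomized orthogonal vector game (OVG) in \cite{marsden2022efficient} goes through verbatim, since those reductions only use that rows of $\mb A$ are unit vectors and that the wall term $\|\mb A\mb x\|_\infty$ separates successful points.

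The only step to modify is the information-theoretic bound on the Player's winning probability in the OVG. There, one builds from the Player's $k$ returned orthogonal vectors a matrix $\mb Z=[\mb z_1,\dots,\mb z_{\ceil{k/5}}]$ whose columns are orthonormal and such that every row $\mb a$ of the unobserved part $\mb A'$ of $\mb A$ satisfies $\|\mb Z^\top\mb a\|_\infty\leq \zeta/\sqrt d$ for some tolerance $\zeta\leq 1$. In the original proof this is coupled with \cref{lemma:sensitive_base_marsden}, which gives $2^{-c_H k}$ per row; here I would replace it by \cref{lemma:concentration_new}, which yields, after absorbing the $\delta'$-discretization error,
\[
    \Pbb_{\mb a\sim\Ucal_{\delta'}}\paren{\|\mb Z^\top\mb a\|_\infty\leq \zeta/\sqrt d} \leq \paren{\sqrt 2(\zeta+\delta'\sqrt d)}^{\ceil{k/5}}.
\]
Counting occurrences in $\Dcal_{\delta'}$ and following the same entropy chain $H(\mb A'\mid\mb Y)\leq (n-m)\bigl(\log_2|\Dcal_{\delta'}|+\Pbb(\Ecal)\cdot\tfrac{k}{5}\log_2(\sqrt 2(\zeta+\delta'\sqrt d))\bigr)$ gives a per-row information gain of $\Omega(k\log(1/(\zeta+\delta'\sqrt d)))$ bits rather than $\Omega(k)$ bits. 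Since $H(\mb A) = n\log_2|\Dcal_{\delta'}| = \Theta(d^2\ln\tfrac{1}{\epsilon})$ now, the factor $\ln\frac{1}{\epsilon}$ appears naturally in the memory lower bound.

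To finish, I would repeat the end of the Marsden et al.\ argument with $\zeta=\Theta(d^4\epsilon)$: the reduction from $\epsilon$-convex-optimization yields parameters $\alpha=2\eta/\gamma_1$ and $\beta=\gamma_2/4$ for which $\alpha(\sqrt d/\beta)^{5/4}\leq \zeta/\sqrt d$ once $\epsilon\leq 1/d^4$, so the discretization error $\delta'\sqrt d$ is absorbed into $\zeta$ and $\log_2(\sqrt 2(\zeta+\delta'\sqrt d))^{-1}=\Omega(\ln\tfrac{1}{\epsilon})$. Choosing $k=\Theta\bigl((M+d\ln d)/(d\ln\tfrac{1}{\epsilon})\bigr)$ and rebalancing exactly as in their proof of the randomized trade-off then yields, for any $\delta\in[0,1]$, that either $M\geq d^{5/4-\delta}\ln\tfrac{1}{\epsilon}$ or the query complexity is $\tilde\Omega(d^{1+4\delta/3})$.

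The main obstacle is bookkeeping rather than conceptual: I must check that every constant and every discretization tolerance in the Marsden et al.\ argument (in particular the matching between $\eta$, the wall tolerance in the OVG, and the precision at which the game's hints are returned) still satisfies $\alpha(\sqrt d/\beta)^{5/4}\leq \zeta/\sqrt d$ once we set $\zeta = \Theta(d^4\epsilon)$, and that the discretization error $\delta'\sqrt d=\epsilon\sqrt d$ is genuinely dominated by $\zeta$ so that the bound $\log_2(\sqrt 2(\zeta+\delta'\sqrt d))^{-1}=\Omega(\ln\tfrac{1}{\epsilon})$ really holds. Given the analogous checks already carried out in the proofs of \cref{thm:improved_convex_optim,thm:improved_feasibility}, this should go through as long as we take $\epsilon\leq 1/d^4$ as stated.
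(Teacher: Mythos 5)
Your high-level recipe matches the paper's: replace the hypercube distribution on rows of $\mb A$ by the sphere discretization $\Ucal_{\delta'}$ with $\delta'=\epsilon$, invoke \cref{lemma:concentration_new} in place of \cref{lemma:sensitive_base_marsden} to push an extra $\log(1/(\zeta+\delta'\sqrt d))=\Omega(\ln\frac{1}{\epsilon})$ factor through the entropy argument in \cref{prop:lower_bound_queries_game}, and then rebalance. That is exactly how the paper proceeds.

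However, you have imported the wrong construction and the wrong parameters. The function class you write,
$\max\{\|\mb A\mb x\|_\infty-\eta,\;\eta\mb v_0^\top\mb x,\;\eta(\max_{p,l}\mb v_{p,l}^\top\mb x-p\gamma_1-l\gamma_2)\}$, and the parameters $\eta=2\sqrt d\,\epsilon$, $\alpha=2\eta/\gamma_1$, $\beta=\gamma_2/4$, $\zeta=\Theta(d^4\epsilon)$ are all from the deterministic proof of \cref{thm:improved_convex_optim}, i.e.\ from the adaptive construction of \cite{blanchard2023quadratic} — not from \cite{marsden2022efficient}. This is not only a misattribution: the Blanchard construction is \emph{adaptive} (it builds the hard function on-the-fly against a fixed deterministic algorithm), and that adaptivity is precisely why it cannot yield a lower bound against randomized algorithms. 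Since \cref{thm:improved_marsden} concerns potentially randomized algorithms, you must use Marsden's \emph{fixed distribution} over functions,
$\mu\max\{\frac{1}{\mu}\|\mb A\mb x\|_\infty-1,\max_{i\in[N]}(\mb v_i^\top\mb x-i\gamma)\}$, with $\mu=16\sqrt d\,\epsilon$. The paper also needs a genuine modification in the reduction to the game: the tolerance $d^{-4}$ in line 8 of Marsden's OVG is replaced by $\sqrt d\,\mu$, and one observes that OVG corresponds to OVGH with $\alpha=\sqrt d\,\mu$, $\beta=1/d^2$, while ignoring the hints; then $\zeta=d^{1+25/16}\mu$ and the hypothesis $\epsilon\leq d^{-4}$ ensures $\zeta\leq 1/\sqrt d$. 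Your claim that the reduction goes through ``verbatim'' is therefore not quite right, and the specific $\alpha$, $\beta$, $\zeta$ you quote would not arise from Marsden's construction. Keep the concentration-lemma substitution and the entropy bookkeeping (that part is correct and is the paper's key step), but redo the reduction with the Marsden function class and parameters; as written, the argument would only recover the deterministic bound.
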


The distribution considered in \cite{marsden2022efficient} is given by the functions
\begin{equation*}
    \frac{1}{d^6} \max \set{ d^5 \|\mb A\mb x\|_\infty-1 , \max_{i\in[N]} (\mb v_i^\top \mb x - i\gamma)},
\end{equation*}
where $N\leq d$ is a parameter, $\mb A$ has $\floor{d/2}$ rows drawn i.i.d. from $\Ucal(\{\pm 1\}^d)$, and the vectors $\mb v_i$ are drawn i.i.d. from the rescaled hypercube $\mb v_i\sim\Ucal(d^{-1/2} \{\pm 1\}^d)$. We adapt the class of functions by simply changing pre-factors as follows
\begin{equation}\label{eq:class_functions}
    \mu \max \set{\frac{1}{\mu} \|\mb A\mb x\|_\infty-1 , \max_{i\in[N]} (\mb v_i^\top \mb x - i\gamma)},
\end{equation}
where $\mb A$ has the same number of rows but they are draw i.i.d. from $\Ucal_\delta$, and $\delta,\mu>0$ are parameters to specify. We use the notation $\mu$ instead of $\eta$ as in the previous sections because \cite{marsden2022efficient} already use a parameter $\eta$ which in our context can be interpreted as $\eta = 1/(\mu\sqrt d)$. We choose the parameters $\mu= 16\sqrt{d} \epsilon$ and $\delta'=\epsilon$.

Again, as for the previous sections, the original proof can be directly used to show that if an algorithm is guaranteed to find a $\frac{\mu}{16\sqrt N}(\geq\epsilon)$-suboptimal point for the above function class, there is an algorithm that wins at their Orthogonal Vector Game (OVG) \cite[Game 1]{marsden2022efficient}, with the only difference that the parameter $d^{-4}$ (l.8 of OVG) is replaced by $\sqrt d \mu$. OVG requires the output to be \emph{robustly-independent} (defined in \cite{marsden2022efficient}) and effectively corresponds to $\beta=1/d^2$ in OVGH. As a result, there is a successful algorithm for the OVGH with parameters $\alpha = \sqrt d \mu$ and $\beta=1/d^2$ and that even completely ignores the hints. Hence, we can now directly use \cref{prop:lower_bound_queries_game} with $\zeta = d^{1+25/16}\mu$ (from the assumption $\epsilon \leq d^{-4}$ we have $\zeta\leq 1/\sqrt d$). This shows that with the adequate choice of $k=\Theta(\frac{M+d\ln d}{d\ln\frac{1}{\epsilon}})$, the query lower bound is $\Omega(d)$.

Putting things together, a potentially randomized algorithm for convex optimization that uses $M$ memory makes at least the following number of queries
\begin{equation*}
    Q\geq \Omega\paren{\frac{Nd}{k}} = \Omega\paren{\frac{d^{4/3}}{\ln^{1/3}d}\paren{\frac{d\ln\frac{1}{\epsilon}}{M+d\ln d}}^{4/3}}.
\end{equation*}

\subsection{Proof sketch for improving the query-complexity lower bound}

We now turn to improving the query-complexity lower bound by a factor $\frac{\ln\frac{1}{\epsilon}}{\ln d}$. At the high level, the idea is to replicate these constructed ``difficult'' class of functions at $\frac{\ln\frac{1}{\epsilon}}{\ln d}$ different scales or levels, similarly to the manner that the historical $\Omega(d\ln\frac{1}{\epsilon})$ lower bound is obtained for convex optimization \cite{Nemirovsky1983}. This argument is relatively standard and we only give details in the context of improving the bound from \cite{marsden2022efficient} for randomized algorithms in convex optimization for conciseness. This result uses a simpler class of functions, which greatly eases the exposition. We first present the construction with 2 levels, then present the generalization to $p=\Theta(\frac{\ln\frac{1}{\epsilon}}{\ln d})$ levels. For convenience, we write
\begin{equation*}
    Q(\epsilon;M,d) = \Omega\paren{\frac{d^{4/3}}{\ln^{1/3}d}\paren{\frac{d\ln\frac{1}{\epsilon}}{M+d\ln d}}^{4/3}}.
\end{equation*}
This is the query lower bound given in \cref{thm:final_marsden} for convex optimization algorithms with memory $M$ that optimize the defined class of functions (Eq~\eqref{eq:class_functions}) to accuracy $\epsilon$.

\subsubsection{Construction of a bi-level class of functions $F_{\mb A, \mb v_1,\mb v_2}$ to optimize}
In the lower-bound proof, \cite{marsden2022efficient} introduce the point
\begin{equation*}
    \bar{\mb x} = -\frac{1}{2\sqrt N}\sum_{i\in [N]} P_{\mb A^\perp}(\mb v_i),
\end{equation*}
where $P_{\mb A ^\perp}$ is the projection onto the orthogonal space to the rows of $\mb A$. They show that with failure probability at most $2/d$, $\bar{\mb x}$ has good function value
\begin{equation*}
    F_{\mb A, \mb v}(\bar{\mb x}) := \mu \max \set{\frac{1}{\mu} \|\mb A\bar{\mb x}\|_\infty-1 , \max_{i\in[N]} (\mb v_i^\top \bar{\mb x} - i\gamma)} \leq -\frac{\mu}{8\sqrt N}.
\end{equation*}
This is shown in \cite[Lemma 25]{marsden2022efficient}. On the other hand, from \cref{thm:improved_marsden}, during the first
\begin{equation*}
    Q_1 = Q(\epsilon;M,d)
\end{equation*}
queries of any algorithm, with probability at least $1/3$, all queries are at least $\mu/(16\sqrt N)$-suboptimal compared to $\bar{\mb x}$ in function value \cite[Theorem 28, Lemma 14 and Theorem 16]{marsden2022efficient}. Precisely, if $F_{\mb A, \mb v}$ is the sampled function to optimize, with probability at least $1/3$,
\begin{equation*}
    F_{\mb A, \mb v}(\mb x_t) \geq F_{\mb A, \mb v}(\bar{\mb x}) + \frac{\mu}{16\sqrt N} \geq F_{\mb A, \mb v}(\bar{\mb x}) + \frac{\mu}{16\sqrt d},\quad \forall t\leq Q_1.
\end{equation*}
As a result, we can replicate the term $\max_{i\in[N]}(\mb v_i^\top \mb x - i\gamma)$ at a smaller scale within the ball $B_d(\bar{\mb x},1/(16\sqrt d))$. For convenience, we introduce $\xi_2=1/(16\sqrt d)$ which will be the scale of the duplicate function. We separate the wall term $\|\mb A\mb x\|_\infty -\mu$ for convenience. Hence, we define
\begin{align*}
     G_{\mb A, \mb v_1}(\mb x) &:= \mu\max_{i\in [N]} \paren{\mb v_{1,i}^\top \mb x - i\gamma}\\
    G_{\mb A, \mb v_1, \mb v_2}(\mb x) &:= \max\{ G_{\mb A, \mb v^{(1)}}(\mb x) , G_{\mb A, \mb v_1}(\bar{\mb x}) + \frac{\mu \xi_2}{3} \cdot\\
    &  \left.\max\set{1 + \|\mb x-\bar{\mb x}\|_2, 1 + \frac{\xi_2}{6} + \frac{\xi_2}{18}\max_{i\in [N]} \paren{ \mb v_{2,i}^\top \paren{\frac{\mb x-\bar{\mb x}}{\xi_2/9} } - i\gamma } } \right\}
\end{align*}
An illustration of the construction is given in \cref{fig:rescaling}. The resulting optimization functions are given by adding the wall term:
\begin{align*}
    F_{\mb A, \mb v_1}(\mb x) &= \max \set{\|\mb A\mb x\|_\infty-\mu,G_{\mb A, \mb v_1}(\mb x) }   \\
    F_{\mb A, \mb v_1,\mb v_2}(\mb x) &= \max\set{\|\mb A\mb x\|_\infty-\mu,G_{\mb A, \mb v_1,\mb v_2}(\mb x) }
\end{align*}

\begin{figure}[tb]
  \centering
    \begin{tikzpicture}[scale=0.6]
    
  \draw[->] (-8,0) -- (8,0) node[right] {$\mb x$};
  
  \draw (-8,6)--(-6,1)--(-4,0.5)--(2,1.25)--(6,4)--(8,8) node[right]{$G_{\mb A, \mb v_1}(\mb x)$};

  \draw (0,0.2)--(0,-0.2) node[below]{$\bar{\mb x}$};

  \draw[latex-latex] (0,1) -- node[left]{$\frac{\mu\xi_2}{3}$} (0,2);

  \draw[draw=red] (-8,5)--(0,2)--(8,5) node[right, color=red]{\begin{tabular}{l}
      $G_{\mb A, \mb v_1}(\bar{\mb x})$ \\
       $+ \frac{\mu \xi_2}{3}\paren{1 + \|\mb x-\bar{\mb x}\|_2}$
  \end{tabular} };

  \draw[draw=red, style=dashed] (-8*2/3,2+2)--(8*2/3,2+2);

  \draw[latex-latex, draw=red] (0,2+3) -- node[above, color=red]{$\frac{\xi_2}{3}$} (8*2/3,2+3);

  \draw[latex-latex, draw=red] (-8,2) -- node[left, color=red]{$\mu\xi_2\cdot \frac{2\xi_2}{9}$} (-8,2+2);

  \draw[draw=blue] (-8*1.75/3,2+1.75) -- (-2,2+1.25)-- (0,2+1) -- (8*1.5/3,2+1.5);

  \draw[draw=blue,-latex] (2.5,3.25)--(6,2) node [right, color=blue]
  {\begin{tabular}{l}
      $G_{\mb A, \mb v_1}(\bar{\mb x}) + \frac{\mu \xi_2}{3} + \frac{\mu\xi_2^2}{18}$\\
       $+\frac{\mu\xi_2^2}{54} \max_{i\in [N]}
       \paren{ \mb v_{2,i}^\top \paren{\frac{\mb x-\bar{\mb x}}{\xi_2/9}} - i\gamma }$
  \end{tabular} };


    \end{tikzpicture}
  \caption{Representation of the procedure to rescale the optimization function.}
  \label{fig:rescaling}
\end{figure}
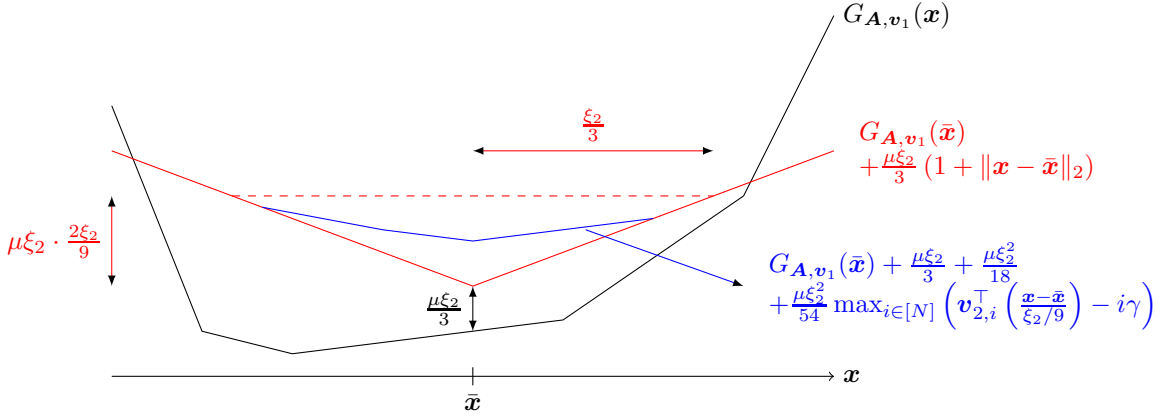

We first explain the choice of parameters. First observe that since $\|\mb A\bar{\mb x}\|=0$, we have $G_{\mb A, \mb v_1}(\bar{\mb x}) = F_{\mb A, \mb v_1}(\bar{\mb x})$. We can then check that for all $\mb x\in B_d(0,1)$,
\begin{equation}\label{eq:previous_level_unaffected}
    G_{\mb A, \mb v_1,\mb v_2}(\mb x) \leq \max\set{ G_{\mb A, \mb v_1}(\mb x), G_{\mb A, \mb v_1}(\bar{\mb x})+\frac{2}{3}\mu\xi_2}.
\end{equation}
Further, for any $\mb x\in B_d(\bar{\mb x},\xi_2/3)$, since $F_{\mb A, \mb v_1}$ is $1$-Lipschitz, we can easily check that
\begin{equation*}
    G_{\mb A, \mb v_1,\mb v_2}(\mb x) - G_{\mb A, \mb v_1}(\bar{\mb x}) = \frac{\mu\xi_2}{3}\max\set{1+ \|\mb x-\bar{\mb x}\|_2, 1+\frac{\xi_2}{6} + \frac{\xi_2}{18}\max_{i\in [N]} \paren{ \mb v_{2,i}^\top \paren{\frac{\mb x-\bar{\mb x}}{\xi_2/9} } - i\gamma } } \leq \frac{2}{3}\mu\xi_2.
\end{equation*}
Thus, $G_{\mb A, \mb v_1,\mb v_2}(\mb x)$ does not coincide with $G_{\mb A, \mb v_1}(\mb x)$ on $B_d(\bar{\mb x},\xi_2/3)$. Then, the $\|\mb x-\bar{\mb x}\|_2$ term ensures that any minimizer of $G_{\mb A, \mb v_1,\mb v_2}$ is contained within the closed ball $B_d(\bar{\mb x},\xi_2/3)$. Also, to obtain a $\mu\xi_2/3$-suboptimal solution of $F_{\mb A, \mb v_1,\mb v_2}$, the algorithm needs to find what would be a $\mu\xi_2$-suboptimal solution of $F_{\mb A, \mb v_1}$, while receiving the same response as when optimizing the latter. Next, for any $\mb x\in B_d(\bar{\mb x},\xi_2/9)$, the term $\max_{i\in [N]} \paren{ \mb v_{2,i}^\top \paren{\frac{\mb x-\bar{\mb x}}{\xi_2/9} } - i\gamma }$ lies in $[-1,1]$. Hence, we can check that for $\mb x\in B_d(\bar{\mb x},\xi_2/9)$,
\begin{equation}\label{eq:last_level}
    G_{\mb A, \mb v_1,\mb v_2}(\mb x) = G_{\mb A, \mb v_1}(\bar{\mb x}) + \frac{\mu\xi_2}{3} +  \frac{\mu\xi_2^2}{18} + \frac{\mu\xi_2^2}{54}\max_{i\in [N]} \paren{ \mb v_{2,i}^\top \paren{\frac{\mb x-\bar{\mb x}}{\xi_2/9} } - i\gamma }.
\end{equation}

We now argue that $F_{\mb A, \mb v_1,\mb v_2}$ acts as a duplicate function. Until the algorithm reaches a point with function value at most $G_{\mb A, \mb v_1}(\bar {\mb x})+\mu\xi_2$, the optimization algorithm only receives responses consistent with the function $F_{\mb A,\mb v_1}$ by Eq~\eqref{eq:previous_level_unaffected}. Next, all minimizers of $F_{\mb A, \mb v_1,\mb v_2}$ are contained in $B_d(\bar{\mb x},\xi_2/3)$, which was the goal of introducing the term in $\|\mb x-\bar{\mb x}\|_2$. As a result, optimizing $F_{\mb A, \mb v_1,\mb v_2}$ on this ball is equivalent to minimizing
\begin{equation*}
    \tilde F_{\mb A, \mb v_2}(\mb y) = \max\set{\|\mb A\mb y\|_\infty - \mu_2, c_2 + \nu_2 \max_{i\in [N]}( \mb v_{2,i}^\top \mb y - i\gamma ), c_2' + \nu_2' \|\mb y\|},\quad \mb y\in B_d(0,3),
\end{equation*}
where $\mb y = \frac{\mb x-\bar{\mb x}}{\xi_2/9}$. The function has been rescaled by a factor $\xi_2/9$ compared to $F_{\mb A, \mb v_1,\mb v_2}$ so that $\mu_2 = \frac{9\mu}{\xi_2}$, $\nu_2 = \frac{\mu \xi_2}{6}$,  $\nu_2' = 6\mu$, $c_2 = \frac{9}{\xi_2}G_{\mb A, \mb v_1}(\bar{\mb x}) + 3\mu + \frac{\mu\xi_2}{2}$, and $c_2' =  \frac{9}{\xi_2}G_{\mb A, \mb v_1}(\bar{\mb x}) + 3\mu$. By Eq~\eqref{eq:last_level}, the two first terms of $\tilde F_{\mb A, \mb v_1}$ are preponderant for $\mb y\in B_d(0,1)$.

The form of $\tilde F_{\mb A, \mb v_2}$ is very similar to the original form of functions
\begin{equation*}
    F_{\mb A, \mb v_2} = \max\set{\|\mb A\mb y\|_\infty - \mu_1', \mu_2' \max_{i\in [N]}( \mb v_{2,i}^\top \mb y - i\gamma ) },
\end{equation*}
In fact, the same proof structure for the query-complexity/memory lower-bound can be applied in this case. The main difference is that originally one had $\mu_1'=\mu_2'$; here we would instead have $\mu_1'= \mu_2+c_2=\Theta(\mu/\xi_2)$ and $\mu_2' = \nu_2 = \Theta(\mu\xi_2)$. Intuitively, this corresponds to increasing the accuracy to $\Theta(\epsilon\xi_2^2)$---a factor $\xi_2$ is due to the fact that $\tilde F_{\mb A, \mb v_2}$ was rescaled by a factor $\xi_2/9$ compared to $F_{\mb A, \mb v_1,\mb v_2}$, and a second factor $\xi_2$ is due to the fact that within $\tilde F_{\mb A, \mb v_2}$, we have $\mu_2' = \Theta(\mu\xi_2)$---while the query lower bound is similar to that obtained for $\Theta(\epsilon/\xi_2)$. As a result, during the first
\begin{equation*}
    Q_2 = Q\paren{\Theta\paren{\frac{\epsilon}{\xi_2}};M,d}
\end{equation*}
queries of any algorithm optimizing $\tilde F_{\mb A, \mb v_2}$, with probability at least $1/3$ on the sample of $\mb A$ and $\mb v_2$, all queries are at least $\Theta(\epsilon\xi_2)$-suboptimal compared to
\begin{equation*}
    \bar{\mb y} = -\frac{1}{2\sqrt N} \sum_{i\in [N]} P_{\mb A^\perp}(\mb v_{2,i}).
\end{equation*}

We are now ready to give lower bounds on the queries of an algorithm minimizing $F_{\mb A, \mb v_1,\mb v_2}$ to accuracy $ \Theta(\epsilon\xi_2^2)$. Let $T_2$ be the index of the first query with function value at most $G_{\mb A, \mb v_1}(\bar{\mb x}) +\mu\xi_2$. We already checked that before that query, all responses of the oracle are consistent with minimizing $F_{\mb A, \mb v_1}$, hence on an event $\Ecal_1$ of probability at least $1/3$, one has $T_2\geq Q_1$. Next, consider the hypothetical case when at time $T_2$, the algorithm is also given the information of $\bar{\mb x}$ and is allowed to store this vector. Given this information, optimizing $F_{\mb A, \mb v_1,\mb v_2}$ reduces to optimizing $\tilde F_{\mb A, \mb v_2}$ since we already know that the minimum is achieved within $B_d(\bar{\mb x},\xi_2/3)$. Further, any query outside of this ball either
\begin{itemize}
    \item returns a vector $\mb v_{1,i}$ which does not give any useful information for the minimization ($\mb v_1$ and $\mb v_2$ are sampled independently and $\bar{\mb x}$ is given),
    \item or returns a row from $\mb A$, as covered by the original proof.
\end{itemize}
Hence, on an event $\Ecal_2$ of probability at least $1/3$, even with the extra information of $\bar{\mb x}$, during the next $Q_2$ queries starting from $T_2$, the algorithm does not query a $\Theta(\mu\xi_2^3)-$suboptimal solution to $F_{\mb A, \mb v_1,\mb v_2}$. This holds a fortiori for the model when the algorithm is not given $\bar{\mb x}$ at time $T_2$.

\subsubsection{Recursive construction of a $p$-level class of functions $F_{\mb A, \mb v_1,\ldots,\mb v_p}$}

Similarly as in the last section, one can inductively construct the sequence of functions $F_{\mb A, \mb v_1}$, $ F_{\mb A, \mb v_1,\mb v_2}$, $F_{\mb A, \mb v_1,\mb v_2,\mb v_3}$, etc. Formally, the induction is constructed as follows: let $(\mb v_p)_{p\geq 1}$ be an i.i.d. sequence of $N$ i.i.d. vectors $(\mb v_{k,i})_{i\in [N]}$ sampled from the rescaled hypercube $d^{-1/2}\{\pm 1\}^d$. Next, we pose
\begin{equation*}
    G_{\mb A, \mb v_1} (\mb x)= \mu^{(1)}\max_{i\in[N]} \paren{\mb v_{1,i}^\top \paren{\frac{\mb x-\bar{\mb x}^{(1)}}{s^{(1)}}} - i\gamma},
\end{equation*}
where $\mu^{(1)}=\mu$, $\bar{\mb x}^{(1)}=\mb 0$ and $s^{(1)}=1$. For $k\geq 1$, we pose
\begin{equation*}
    \bar{\mb x}^{(k+1)} = \bar{\mb x}^{(k)} -\frac{ s^{(k)} }{2\sqrt N}\sum_{i\in [N]}P_{\mb A^\perp}(\mb v_{k,i}), \quad \text{and} \quad F^{(k)}:= G_{\mb A, \mb v_1,\ldots,\mb v_k}(\bar{\mb x}^{(k)}) + \mu^{(k)}\xi_{k+1},
\end{equation*}
for a certain parameter $\xi_{k+1}$ to be specified. We then define the next level as
\begin{multline*}
    G_{\mb A, \mb v_1,\ldots,\mb v_{k+1}}(\mb x) := \max \left\{ G_{\mb A, \mb v_1,\ldots,\mb v_k}(\mb x), G_{\mb A, \mb v_1,\ldots,\mb v_k}(\bar{\mb x}^{(k+1)}) + \frac{\mu^{(k)}\xi_{k+1}}{3}\cdot \right.\\
    \left. \max\set{ 1+\frac{\|\mb x - \bar{\mb x}^{(k+1)}\|_2}{s^{(k)}}, 1 + \frac{\xi_{k+1}}{6} + \frac{\xi_{k+1}}{18} \max_{i\in [N]}\paren{\mb v_{k+1,i}^\top \paren{\frac{\mb x-\bar{\mb x}^{(k+1)}}{s^{(k)}\xi_{k+1}/9}} -i\gamma}  }\right\}.
\end{multline*}
We then pose $\mu^{(k+1)}:=\mu^{(k)}\xi_{k+1}^2/54$ and $s^{(k+1)}:=s^{(k)}\xi_{k+1}/9$, which closes the induction. The optimization functions are defined simply as
\begin{equation*}
    F_{\mb A, \mb v_1,\ldots,\mb v_{k+1}}(\mb x) = \max\set{\|\mb A\mb x\|_\infty -\mu,G_{\mb A, \mb v_1,\ldots,\mb v_{k+1}}(\mb x) }.
\end{equation*}

We checked before that we can use $\xi_2=1/(16\sqrt d)$. For general $k\geq 0$, given that the form of the function slightly changes to incorporate the absolute term (see $\tilde F_{\mb A, \mb v_2}$), this constant may differ slightly. In any case, one has $\xi_k = \Theta(1/\sqrt d)$. Now fix a construction level $p\geq 1$ and for any $k\in[p]$, let $T_k$ be the first time that a point with function value at most $F^{(k)}$ is queried. For convenience let $T_0=0$. Using the same arguments as above recursively, we can show that on an event $\Ecal_k$ with probability at least $1/3$,
\begin{equation*}
    T_k-T_{k-1} \geq Q_k = Q\paren{\Theta\paren{\frac{\mu}{s^{(k)}}};M,d}
\end{equation*}
Next note that the sequence $F^{(k)}$ is decreasing and by construction, if one finds a $\mu^{(p)}\xi_{p+1}$-suboptimal point of $F_{\mb A, \mb v_1,\ldots,\mb v_p}$, then this point has value at most $F^{(p)}$. As a result, for an algorithm that finds a $\mu^{(p)}\xi_{p+1}$-suboptimal point, the times $T_0,\ldots,T_p$ are all well defined and non-decreasing. We recall that $\mu=\Theta(\sqrt d\epsilon)$. Therefore, we can still have $\mu/s^{(p)}\leq \sqrt \epsilon$ and $\mu^{(p)}\xi_{p+1}\geq \epsilon^2$ for $p=\Theta(\frac{\ln\frac{1}{\epsilon}}{\ln d})$. Combining these observations, we showed that when optimizing the functions $F_{\mb A,\mb v_1,\ldots,\mb v_p}$ to accuracy $\Theta(\mu^{(p)}\xi_{p+1})=\Omega(\epsilon^2)$, the total number of queries $Q$ satisfies
\begin{equation*}
    \Ebb[Q] \geq \frac{1}{3}\sum_{k\in[p]}Q_k \geq \frac{p}{3} Q(\sqrt\epsilon;M,d)=  \Theta\paren{\frac{d^{4/3}\ln \frac{1}{\epsilon}}{\ln^{4/3}d}\paren{\frac{d\ln\frac{1}{\epsilon}}{M+d\ln d}}^{4/3}}.
\end{equation*}
Changing $\epsilon$ to $\epsilon^2$ proves the desired result.

\begin{theorem}\label{thm:final_marsden}
    For $\epsilon \leq 1/d^8$ and any $\delta\in[0,1]$, any (potentially randomized) algorithm guaranteed to minimize $1$-Lipschitz convex functions over the unit ball with $\epsilon$ accuracy uses at least $d^{1.25-\delta}\ln\frac{1}{\epsilon}$ bits of memory or makes $\tilde\Omega(d^{1+4\delta/3}\ln\frac{1}{\epsilon})$ queries.
\end{theorem}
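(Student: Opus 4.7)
The plan is to formalize the recursive multi-level construction outlined in the previous subsection and combine it with the improved single-level bound given in \cref{thm:improved_marsden}. Starting from the class of functions $F_{\mb A, \mb v}$ of Eq~\eqref{eq:class_functions}, I would build $p = \Theta(\ln\frac{1}{\epsilon}/\ln d)$ nested levels $F_{\mb A, \mb v_1, \ldots, \mb v_k}$ in which each successive level embeds a rescaled copy of the hard instance inside a ball $B_d(\bar{\mb x}^{(k)}, s^{(k)})$ of radius $s^{(k)} = \prod_{j\leq k-1} \xi_{j+1}/9 = \Theta(d^{-(k-1)/2})$ centered at a point $\bar{\mb x}^{(k)}$ that is fully determined by $(\mb A, \mb v_1, \ldots, \mb v_{k-1})$. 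The key structural features to verify are that (i) the ``$\|\mb x - \bar{\mb x}^{(k+1)}\|_2/s^{(k)}$'' slope term forces every $\mu^{(p)}\xi_{p+1}$-minimizer of $F_{\mb A, \mb v_1,\ldots,\mb v_p}$ to lie inside $B_d(\bar{\mb x}^{(p)}, s^{(p)}/3)$, and (ii) on the smaller ball $B_d(\bar{\mb x}^{(k+1)}, s^{(k)}\xi_{k+1}/9)$ the function $G_{\mb A, \mb v_1,\ldots,\mb v_{k+1}}$ agrees, up to constants and a rescaling of variables, with a single-level instance $\tilde F_{\mb A, \mb v_{k+1}}$ of the Marsden--Sharan form (Eq~\eqref{eq:class_functions}) with effective accuracy $\Theta(\epsilon \cdot s^{(k)})$.

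Next, I would define $T_k$ as the first iteration at which the algorithm queries a point with function value at most $F^{(k)} = G_{\mb A, \mb v_1,\ldots,\mb v_k}(\bar{\mb x}^{(k)}) + \mu^{(k)}\xi_{k+1}$, with $T_0 = 0$. The critical claim is that for every $k \in [p]$, conditional on the history up to time $T_{k-1}$, with probability at least $1/3$ over the draw of $\mb v_k$ one has $T_k - T_{k-1} \geq Q_k := Q\!\left(\Theta(\mu/s^{(k)}); M, d\right)$, where $Q(\cdot;M,d)$ is the single-level lower bound of \cref{thm:improved_marsden}. The argument combines three observations: before the algorithm reaches function value $F^{(k)}$, Eq~\eqref{eq:previous_level_unaffected} guarantees that all oracle responses are indistinguishable from those of $F_{\mb A, \mb v_1,\ldots,\mb v_{k-1}}$ and hence reveal nothing about $\mb v_k$; on the restricted ball where the $\mb v_k$ block becomes active, the problem reduces by affine rescaling to a single-level instance; and any queries outside the relevant ball can only return subgradients coming from the wall term $\|\mb A\mb x\|_\infty - \mu$ or from earlier $\mb v_j$ blocks, so the information-theoretic bound on $\mb v_k$ carries over verbatim. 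Since the sequence $F^{(k)}$ is decreasing, any algorithm that finds a $\mu^{(p)}\xi_{p+1}$-suboptimal point must trigger all $p$ stopping times, so summing gives
\[
\Ebb[Q] \;\geq\; \sum_{k=1}^{p} \Pbb(\Ecal_k)\, Q_k \;\geq\; \frac{p}{3}\, Q(\sqrt{\epsilon}; M, d),
\]
with $p = \Theta(\ln(1/\epsilon)/\ln d)$ chosen so that $\mu/s^{(p)} \leq \sqrt{\epsilon}$ while $\mu^{(p)}\xi_{p+1} \geq \epsilon^2$; this is feasible because $\mu = \Theta(\sqrt d\, \epsilon)$ and $s^{(k)} = \Theta(d^{-(k-1)/2})$. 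Plugging \cref{thm:improved_marsden} into $Q(\sqrt{\epsilon}; M, d)$ introduces the extra $\ln(1/\epsilon)$ factor, and relabeling the target accuracy by $\epsilon \mapsto \epsilon^2$ (which forces $\epsilon \leq 1/d^8$ from the $\epsilon \leq 1/d^4$ condition of \cref{thm:improved_marsden}) yields the stated bound.

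The main obstacle will be step two: rigorously establishing the conditional independence that makes a single-level bound applicable at each level despite the adaptive, memory-constrained nature of the algorithm. Concretely, one must verify that in the window $(T_{k-1}, T_k]$, the view of the algorithm (its $M$-bit state together with the oracle transcript) is measurably a function of $(\mb A, \mb v_1, \ldots, \mb v_{k-1})$ plus at most a few queries that fall in the ``active'' ball, so that \cref{thm:improved_marsden} may be invoked with $\mb v_k$ fresh. This requires care in two places: controlling the $\arg\max$ that selects which $\mb v_{k,i}$ appears in a subgradient to ensure it does not leak the remaining components, and handling queries near the boundary of $B_d(\bar{\mb x}^{(k)}, s^{(k-1)}/3)$ where the function transitions between the $(k{-}1)$-th and $k$-th level. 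Both can be handled by the standard device of conditioning on the event that no ``informative'' query to block $k$ has yet occurred and appealing to the same information-theoretic machinery used in the single-level proof of \cite{marsden2022efficient}.
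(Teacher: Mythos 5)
Your proposal follows essentially the same route as the paper's proof: the same recursive $p$-level rescaled construction with $s^{(k)}=\Theta(d^{-(k-1)/2})$, the same stopping times $T_k$ and events $\Ecal_k$, the same bound $\Ebb[Q]\geq \frac{p}{3}Q(\sqrt\epsilon;M,d)$ with $p=\Theta(\ln\frac{1}{\epsilon}/\ln d)$, and the same final substitution $\epsilon\mapsto\epsilon^2$ to land at the $\epsilon\leq 1/d^8$ condition. You also correctly flag, and sketch a resolution for, the conditional-independence step that the paper treats at proof-sketch level, so the two arguments match.
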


The same recursive construction can be applied to the results from \cref{thm:improved_convex_optim,thm:improved_feasibility} to improve their oracle-complexity lower bounds by a factor $\frac{\ln\frac{1}{\epsilon}}{\ln d}$, albeit with added technicalities due to the adaptivity of their class of functions. This yields \cref{thm:improved_lower_bounds}.

\section{Memory-constrained gradient descent for the feasibility problem}
\label{section:gradient_descent}

In this section, we prove a simple result 
showing that memory-constrained gradient descent applies to the feasibility problem. We adapt the algorithm described in \cite{woodworth2019open}.

\begin{algorithm}
\caption{Memory-constrained gradient descent}\label{algorithm:cm_gd}
    \KwIn{Number of iterations $T$, computation accuracy $\eta\leq 1$, target accuracy $\epsilon\leq 1$}
    Initialize: $\mb x = \mb 0$\;
    \For{$t=0,\ldots,T$}{
        Query the oracle at $\mb x$\,
        
        \lIf{$\mb x$ successful}{\Return $x$}
        
        Receive a separation vector $\mb g$ with accuracy $\eta$\,

        Update $\mb x$ as $\mb x-\epsilon \mb g$ up to accuracy $\eta$\,
    }
    \Return $\mb x$
\end{algorithm}

We now prove that this memory-constrained gradient descent gives the desired result of \cref{prop:mc_gd}.

\begin{proof}[Proof of \cref{prop:mc_gd}]
    Denote by $\mb x_t$ the state of $\mb x$ at iteration $t$, and $\mb g_t$ (resp. $\tilde {\mb g}_t$) the separation oracle without rounding errors (resp. with rounding errors) at $\mb x_t$. By construction,
    \begin{equation}\label{eq:rounding}
        \|\mb x_{t+1}-(\mb x_t+\epsilon \tilde{\mb g}_t)\|\leq \eta \quad \text{and}\quad \|\tilde{\mb g}_t-\mb g_t\|\leq \eta.
    \end{equation}
    As a result, recalling that $\|\mb g_t\|=1$,
    \begin{equation*}
        \|\mb x_{t+1}-\mb x^\star\|^2 \leq (\|\mb x_t+\epsilon \tilde{\mb g}_t-\mb x^\star\|+\eta)^2 \leq (\|\mb x_t+\epsilon \mb g_t-\mb x^\star\|+(1+\epsilon)\eta)^2 \leq \|\mb x_t+\epsilon \mb g_t-\mb x^\star\|^2 + 20\eta.
    \end{equation*}
    By assumption, $Q$ contains a ball $B_d(\mb x^\star,\epsilon)$ for $\mb x^\star\in B_d(0,1)$. Then, because $\mb g_t$ separates $\mb x_t$ from $B_d(\mb x^\star,\epsilon)$, one has $\mb g_t^\top (\mb x^\star-\mb x_t) \geq \epsilon$. Therefore,
    \begin{align*}
        \|\mb x_{t+1}-\mb x^\star\|^2 &\leq  \|\mb x_t-\mb x^\star\|^2 
 +2\epsilon\mb g_t^\top (\mb x_t-\mb x^\star)+ \epsilon^2 \|\mb g_t\|^2 + 20\eta\\
 &\leq \|\mb x_t-\mb x^\star\|^2 -\epsilon^2 +20\eta.
    \end{align*}
   Then, take $\eta=\epsilon^2/40$ and $T=\frac{8}{\epsilon^2}$. If iteration $T$ was performed, we have using the previous equation
   \begin{equation*}
       \|\mb x_T-\mb x^\star\|^2\leq \|\mb x_0-\mb x^\star\|^2 - \frac{\epsilon^2}{2}T \leq 4-\frac{\epsilon^2}{2}T \leq 0.
   \end{equation*}
   Hence, $\mb x_T$ is an $\epsilon$-suboptimal solution.

   We now turn to the memory usage of gradient descent. It only needs to store $\mb x$ and $\mb g$ up to the desired accuracy $\eta=\Ocal(\epsilon^2)$. Hence, this storage and the internal computations can be done with $\Ocal(d\ln\frac{d}{\epsilon})$ memory. Because we suppose that $\epsilon\leq \frac{1}{\sqrt d}$, this gives the desired result.
\end{proof}

\section{Discussion and Conclusion}
\label{sec:conclusion}

To the best of our knowledge, this work is the first to provide some positive trade-off between oracle-complexity and memory-usage for convex optimization or the feasibility problem, as opposed to lower-bound impossibility results \cite{marsden2022efficient,blanchard2023quadratic}. Our trade-offs are more significant in a high accuracy regime: when $\ln\frac{1}{\epsilon}\approx d^c$, for $c>0$ our trade-offs are polynomial, while the improvements when $\ln\frac{1}{\epsilon}=\text{poly}(\ln d)$ are only in $\ln d$ factors. A natural open direction \cite{woodworth2019open} is whether there exist algorithms with polynomial trade-offs in that case. We also show that in the exponential regime $\ln\frac{1}{\epsilon}\geq \Omega(d\ln d )$, gradient descent is not Pareto-optimal. Instead, one can keep the optimal memory and decrease the dependence in $\epsilon$ of the oracle-complexity from $\frac{1}{\epsilon^2}$ to $(\ln\frac{1}{\epsilon})^d$. The question of whether the exponential dependence in $d$ is necessary is left open. Last, our algorithms rely on the consistency of the oracle, which allows re-computations. While this is a classical assumption, gradient descent and classical cutting-plane methods do not need it; removing this assumption could be an interesting research direction (potentially, this could also yield stronger lower bounds).

\printbibliography
\end{document}